\renewcommand{\theequation}{\arabic{section}.\arabic{equation}}
\numberwithin{equation}{section} 
\def \pt {\partial}
\def \eq$#1${\begin{equation}#1\end{equation}}
\def \sym$#1${\begin{empheq}[left={\empheqlbrace}]{align}#1\end{empheq}}
\def \M {\mathbb{M}}
\def \Z {\mathbb{Z}}
\def \tn {\tilde{n}}
\newcommand{\dd}{{\rm d}}
\newcommand{\blue}[1]{{\color{black}#1}}
\newcommand{\abs}[1]{\lvert #1 \rvert}
\newtheorem{theorem}{Theorem}[section]
\newtheorem{corollary}[theorem]{Corollary}
\newtheorem{remark}[theorem]{Remark}
\newtheorem{assumption}[theorem]{Assumption}
\begin{document}

	\title{Confined run-and-tumble model with boundary aggregation: 
		long time behavior and convergence to the confined Fokker-Planck model}
	
	\author{Jingyi Fu\thanks{School of Mathematics, Institute of Natural Sciences, Shanghai Jiao Tong University, 200240, Shanghai. Email : nbfufu@sjtu.edu.cn}
		\and
		Jiuyang Liang\thanks{School of Mathematics, Institute of Natural Sciences, Shanghai Jiao Tong University, 200240, Shanghai. Email :
			liangjiuyang@sjtu.edu.cn}
		\and
		Benoit Perthame\thanks{Sorbonne Universit{\'e}, CNRS, Universit\'{e} de Paris, Inria, Laboratoire Jacques-Louis Lions UMR7598, F-75005 Paris. Email : Benoit.Perthame@sorbonne-universite.fr. }
		\and
		Min Tang\thanks{School of Mathematics, Institute of Natural Sciences and MOE-LSC, Shanghai Jiao Tong University, 200240, Shanghai. 
			Email : tangmin@sjtu.edu.cn. }
	}
	\maketitle

	\begin{abstract}
		The motile micro-organisms such as {\em E.~coli}, sperm, or some seaweed are usually modelled by self-propelled particles that move with the run-and-tumble process. Individual-based stochastic models are usually employed to model the aggregation phenomenon at the boundary, which is an active research field that has attracted a lot of biologists and biophysicists. Self-propelled particles at the microscale have complex behaviors, while characteristics at the population level are more important for practical applications but rely on individual behaviors. Kinetic PDE models that describe the time evolution of the probability density distribution of the motile micro-organisms are widely used. However, how to impose the appropriate boundary conditions that take into account the boundary aggregation phenomena is rarely studied. In this paper, we propose the boundary conditions for a 2D confined run-and-tumble model (CRTM) for self-propelled particle populations moving between two parallel plates with a run-and-tumble process. The proposed model satisfies the relative entropy inequality and thus long-time convergence. We establish the relation between CRTM and the confined Fokker-Planck model (CFPM) studied in \cite{fu2021fokker}. We prove theoretically that when the tumble is highly forward peaked and frequent enough, CRTM converges asymptotically to the CFPM. A numerical comparison of the CRTM with aggregation and CFPM is given. The time evolution of both the deterministic PDE model and individual-based stochastic simulations are displayed, which match each other well.
	\end{abstract}

	\section{Introduction}
	The run-and-tumble process is now standard to describe the movement of motile organisms, in which the bacteria like {\em E. coli} move straight forward and take reorientation after some time, then continue moving in a new direction~\cite{Berg2004coli}. The running time between two successive reorientations and the newly chosen direction are two random variables, and the stochastic process describing this behavior is also called the velocity jump process, which was first proposed in \cite{stroock1974some}.

	When the self-propelled micro-organisms move in a confined environment, they may aggregate at the boundary\cite{berke2008hydrodynamic1,2019A,li2011accumulation,2009Accumulation}.
	This phenomenon has important influences on several biological processes. For example, the aggregation of sperm at the border affects mammalian reproduction\cite{2006Sperm,2014Rheotaxis}, and the aggregation of bacteria near surfaces and their interaction with the external flow affects the formation of biofilms \cite{2010Laminar, 2014Filaments}. In 1963, Rothschild measured the concentration of bull sperms swimming between glass plates and
	found that bull sperms are non-uniformly distributed and the density peaks near the glass wall \cite{rothschild1963non}. Both sterical and hydrodynamical forces play important roles in boundary aggregations. In \cite{2008Hydrodynamic}, the authors measured the detailed density distribution between two parallel plates and pointed out that non-tumbling cells moving with hydrodynamical forces will reorient to the direction parallel to the surfaces and hence be attracted by the wall. On the other hand, the work of Guanglai Li et al.\cite{2009Accumulation, li2011accumulation} proposed a model of self-propelled particles to explain the boundary aggregation by taking into account the steric force between the swimming cells and the surface and the rotational Brownian motion. More recently, observing by 3D holographic imaging, Bianchi et al. found that reorientation of cells occurs when they are in contact with the surface, and the orientations of cells always point into the surface, no matter the boundary is no-slip or free-slip \cite{bianchi2017holographic, bianchi20193d}.

	One simple stochastic microscopic model that can take into account the boundary aggregations is to assume that micro-organisms are confined in a 2-dimensional space between two horizontal boundaries $\{y=\pm L\}$. Cells move with velocity $(V\cos\theta, V\sin\theta)$, where $V$ is the constant speed and $\theta$ is the angle between its orientation and horizontal direction. When the micro-organisms are away from the boundaries, they follow the classical run-and-tumble movement; when moving toward the boundary, they touch it, and then stay at the boundary until their orientations direct them away from the boundary. For simplicity, we ignore any possible external fields and the rotational/translational diffusion here. More precisely, for each cell, let $Y_t$ be its vertical position, and $\Theta_t$ be the angle between its orientation and horizontal direction. The evolution of $(Y_t,\Theta_t)$ satisfies a Piecewise-Deterministic Markov Processes ~\cite {Davis84}, such that
	\begin{equation}
		\label{svi}
		\begin{cases}
			dY_t=\begin{cases}
				V\sin\Theta_{t}dt, ~~&\text{for} ~|Y_t|<L,~~\text{or}~~|Y_{t}|=L~~\text{and}~~Y_{t}\Theta_{t}< 0,\\
				0,~~&\text{for}~~|Y_t|=L~~\text{and}~~Y_{t}\Theta_{t}\geq0,
			\end{cases}\\[12pt]
			\Theta_{t}=\mathlarger{\sum}\limits_{j=1}^{P_t}\Delta\Theta_{t}^{j},
		\end{cases}
	\end{equation}
	where $P_t$ is a non-homogeneous Poisson process with intensity $\Gamma(Y_t,\Theta_t)>0$, and the jump size $\Delta\Theta^{j}_t$ are i.i.d. random variables from the transitional distribution $\tilde{K}(Y_t,\Theta_t,\Delta\Theta_t)(|Y_t|<L)$ or $\tilde{K}_{\pm}(\Theta_t,\Delta\Theta_t)(Y_t=\pm L)$ for given $(Y_t,\Theta_t)$.

	The computational costs of stochastic microscopic models are potentially huge and for practical applications, characteristics at the macroscale are more important. Stochastic simulations can only obtain the macroscale characteristics after the whole simulation. It is desired to propose simpler mesoscopic kinetic models for the probability density distribution of the micro-organisms. The  kinetic run-and-tumble model is developed and analyzed in \cite{alt1980biased, alt1981singular, Othmer1988, Othmer2000, Othmer2002, Chalub2004, Filbet2005}. One interesting aspect of this model is that by taking into account different scalings of difference processes, the kinetic run-and-tumble model can converge to different macroscopic models at the population level, including Patlak-Keller-Segel system \cite{Chalub2004,hwang2005drift,bellomo2016multiscale} and hyperbolic systems \cite{Filbet2005,dolak2005kinetic}. 
	However, most previous works of the kinetic run-and-tumble model consider only the internal domain, and the boundary effects are rarely considered. Boundary aggregations are not considered in the imposed boundary conditions in some theoretical or numerical works. For example, reflective boundary conditions are considered in \cite{bearon2011spatial, volpe2014simulation, jiang2021transient}, which assumes that the cells collide with the boundary fully elastically. And no-flux boundary conditions are used in \cite{elgeti2013wall, ezhilan2015transport, berlyand2020kinetic, jiang2021transient}, which indicates that self-propulsion and translational diffusion are balanced.

	We propose a confined run-and-tumble model (CRTM) here that describes the time evolution the probability density function of the stochastic process in \eqref{svi}. 
	Inside $|Y_t|<L$, the probability density function satisfies the classical kinetic run-and-tumble model, but particles behave differently at the boundaries and we propose appropriate new boundary conditions. Similar to models in \cite{ezhilan2015distribution,angelani2017confined,fu2021fokker}, the cells are classified into top boundary contacting ones, bottom boundary contacting ones, and free-swimming ones, then one can write down a coupled system of their probability density functions, whose boundary conditions are deduced from the switching between these three different phases.

	A number of studies compute analytically the steady states of a 1D simplified version of CRTM \cite{ezhilan2015distribution,angelani2017confined,alonso2016microfluidic,malakar2018steady,razin2020entropy,roberts2022exact}. The CRTM proposed here is more general and we establish the relative entropy inequality. It is theoretically proved that the CRTM has long-term convergence, i.e. the weak solution of the CRTM will tend to the steady-state solution.

	In \cite{pomraning1992fokker}, the authors proved the convergence from the run-and-tumble model to the Fokker-Planck model for the radiative transport equation. Motivated by the asymptotic analysis in \cite{pomraning1992fokker}, we prove theoretically that when the tumbling is forward peaked and frequent enough, under proper scaling, the asymptotic limit of the CRTM can recover the Confined Fokker-Planck model (CFPM)  studied in \cite{fu2021fokker}. The CFPM has been proposed and considered by physicists in \cite{lee2013active18, wagner2017steady}, in which cells move by rotational Brownian motion and accumulate at the boundary. The boundary conditions for CFPM have quite different forms compared to the boundary conditions for CRTM. We can establish their relationship of by asymptotics.

	The rest of this paper is organized as follows. In \ref{sec:CRTmodel}, we introduce the CRTM for self-propelled particles moving between two parallel plates. We give the proper boundary conditions and prove that the system is mass-conserving. In \ref{sec:CVstst}, the long-time convergence is proved using relative entropy inequality. In \ref{Fokker-PlanckLimit}, proper scalings are introduced, and under some assumptions, one can derive the CFPM from CRTM in the asymptotic limit. We give both stochastic and deterministic numerical solvers in \ref{sec:numerics}, the numerical results of both solvers are compared and the analytical results of Section~4 are verified. Finally, we conclude with some discussion in \ref{sec:conclusion}.

	\section{The confined run-and-tumble model (CRTM)}
	\label{sec:CRTmodel}
	To simplify the problem, we consider self-propelled particles on a 2D plate, they are confined between two parallel boundaries as in Fig. \ref{fig:ModelMap}. The direction perpendicular to the two parallel boundaries is denoted by~$y$. Assume that the magnitudes of the particle velocities are a constant~$V$ while their directions change according to the run-and-tumble process. When the particles touch and move towards the boundaries, they do not move in the $y$ direction while their moving directions change by the run-and-tumble process.  
	They can only leave the boundaries when their moving directions are away from the boundaries. Therefore, we consider the following CRTM:
	\begin{small}
		\begin{subequations}\label{2.1}
			\sym$
			&\left[\dfrac{\pt }{\pt t}+V \sin\theta \dfrac{\pt }{\pt y}+k(y,\theta)\right]n(t,y,\theta)-\!\!\int_{-\pi}^{\!\pi}\! K\left(y,\theta', {\theta-\theta'}\right) n(t,y,\theta') d\theta'=0,&&\theta \in(-\pi,\pi),~y\in (-L,L),\label{2.1a}\\
			&\left[\dfrac{\pt}{\pt t}+k_+(\theta)\right] n_{+}(t,\theta)-\!\!\int_{0}^{\!\pi}\!\! K_+\left(\theta', {\theta-\theta'}\right) n_{+}(t,\theta') d\theta'=V \sin\theta n(t,L,\theta),&&\theta \in(0,\pi),
			\label{2.1b}\\
			&\left[\dfrac{\pt} {\pt t}+k_-(\theta)\right] n_{-}(t,\theta)-\!\!\int_{-\pi}^{0}\! K_-\left(\theta', {\theta-\theta'}\right) n_{-}(t,\theta') d\theta'=-V \sin\theta n(t,-L,\theta),&&\theta \in(-\pi,0).\label{2.1c}
			$
		\end{subequations}
	\end{small}

	Here $y$ is the position along the perpendicular direction of the two parallel boundaries located at  $y=\pm L$; $n(t,y,\theta)$ represents the probability density distribution of the self-propelled particles moving in the direction $\theta\in(-\pi,\pi)$ at time $t$ and position $y$; $n_{\pm}(t,\theta)$ is the probability density distribution of the particles aggregated at $\{y=\pm L\}$, at time $t$, and moving with direction $\pm\theta \in[0,\pi]$; for particles moving with direction $\theta'$, $K(y,\theta',\theta-\theta')\geq 0$ and $K_{\pm}(\theta',\theta-\theta')\geq 0$ give their jump rate to the direction $\theta$. According to the experimental results in \cite{bianchi2017holographic}, $K$ can be different for different $y$, depending on how close the position is to the boundary, and $K_{\pm}(\theta',\theta-\theta')$ can be different from $K(\pm L,\theta',\theta-\theta')$ as well.  Here, $\theta'$ and $\theta$ both take values from $[-\pi,\pi)$ in $K(y,\theta',\theta-\theta')$; while for $K_{+}(\theta',\theta-\theta')$ ($K_{-}(\theta',\theta-\theta')$), $\theta'$ and $\theta$ take values respectively from $[0,\pi]$ ($[-\pi,0]$) and $[-\pi,\pi]$. Moreover, we define the total tumbling rates 
	\begin{equation}
		k(y,\theta):=\int_{-\pi}^{\!\pi}\! K\left(y,\theta,\theta'-\theta\right) d\theta',\label{2.2}
		\quad k_\pm(\theta):=\int_{-\pi}^{\!\pi}\! K_\pm\left(\theta,\theta'-\theta\right) d\theta',
	\end{equation}
	which measure the frequency of particles moving in direction $\theta$ change their direction.   
	\begin{figure}[!ht]
		\centering
		\includegraphics[width=0.6\linewidth]{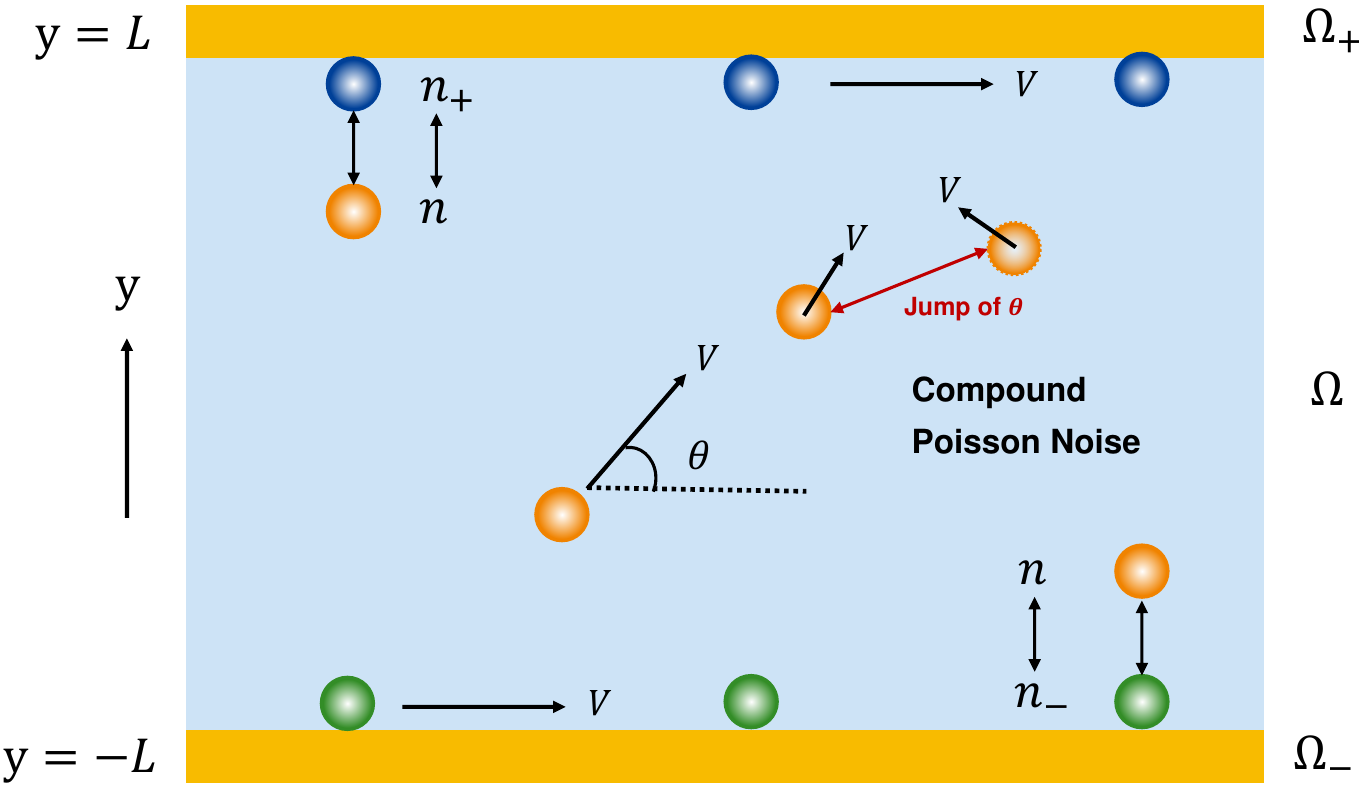}\\
		\caption{Sketch map of the CRTM proposed in this paper.}
		\label{fig:ModelMap}
	\end{figure}
	
	\paragraph{Boundary conditions.}
	
	Boundary conditions play an important role in the coupling of the three equations in (\ref{2.1}). We consider $\theta\in [-\pi,\pi]$ and thus $n$ satisfies the periodic boundary conditions in $\theta$
	\begin{equation}
		n(t,y,-\pi)=n(t,y,\pi).\label{2.4}
	\end{equation}
	When particles touch the boundary, we need to consider the transition between $n(t,y,\theta)$ and $n_{\pm}(t,\theta)$. Self-propelled particles at the boundary can leave the boundary only when $\theta\in [-\pi,0]$ at $y= L$, and $\theta\in [0,\pi]$ at $y=-L$. Thus, we give the following boundary conditions:
	\begin{align}
		-V\sin\theta n(t,L,\theta)&= \int_{0}^{\!\pi}\!\!K_+\left(\theta',\theta-\theta'\right)n_{+}(t,\theta')d\theta',~\quad\theta\in(-\pi,0),\label{2.5}\\[2mm]
		V\sin\theta n(t,-L,\theta)&= \int_{-\pi}^{0}\!K_-\left(\theta',\theta-\theta'\right)n_{-}(t,\theta')d\theta',~\quad\theta\in(0,\pi).\label{2.6}
	\end{align}
	
	\paragraph{Mass conservation.}
	
	Equation \eqref{2.1} with boundary conditions \eqref{2.4}--\eqref{2.6} and initial data such that 
	$$
	n(0,y,\theta)=n_0(y,\theta)\geq 0,\qquad n_+(0,\theta)=n_{0+}(\theta)\geq 0,\qquad
	n_-(0,\theta)=n_{0-}(\theta)\geq 0
	$$
	form the CRTM. The following theorem gives the mass conservation.
	\begin{theorem}
		\label{theorem-1.1} The total mass $M(t)$ defined by 
		\begin{equation}
			M(t)=\int_{-L}^{L}\!\int_{-\pi}^{\!\pi}\! n(t,y,\theta)d\theta \dd y+\int_{0}^{\!\pi}\!\!n_{+}(t,\theta)d\theta+\int_{-\pi}^{0}\!n_{-}(t,\theta)d\theta.\label{2.7}.
		\end{equation}
		is a constant in time.
	\end{theorem}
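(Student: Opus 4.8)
The plan is to differentiate $M(t)$ in time and show the derivative vanishes, relying on the three evolution equations \eqref{2.1a}--\eqref{2.1c} and the boundary/periodicity conditions \eqref{2.4}--\eqref{2.6} to cancel all the terms. First I would write
\[
\frac{d}{dt}M(t)=\int_{-L}^{L}\!\int_{-\pi}^{\pi}\partial_t n\,d\theta\,dy+\int_0^{\pi}\partial_t n_+\,d\theta+\int_{-\pi}^0\partial_t n_-\,d\theta,
\]
assuming enough regularity to differentiate under the integral sign. Then I substitute $\partial_t n$ from \eqref{2.1a}, $\partial_t n_+$ from \eqref{2.1b}, and $\partial_t n_-$ from \eqref{2.1c}. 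The collision terms are handled first: for the interior term, integrating the gain term $\int\!\!\int\!\!\int K(y,\theta',\theta-\theta')n(t,y,\theta')\,d\theta'\,d\theta\,dy$ and swapping the order of the $\theta$ and $\theta'$ integrations, the inner integral $\int_{-\pi}^{\pi}K(y,\theta',\theta-\theta')\,d\theta$ is exactly $k(y,\theta')$ by definition \eqref{2.2} (after renaming), so the gain term cancels the loss term $\int\!\!\int k(y,\theta)n(t,y,\theta)\,d\theta\,dy$. The same Fubini-plus-definition argument applied to \eqref{2.1b} and \eqref{2.1c} makes the boundary collision terms cancel their corresponding $k_\pm$ loss terms.

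Next I treat the transport term $\int_{-L}^{L}\!\int_{-\pi}^{\pi}V\sin\theta\,\partial_y n\,d\theta\,dy$. Integrating in $y$ gives $\int_{-\pi}^{\pi}V\sin\theta\,[n(t,L,\theta)-n(t,-L,\theta)]\,d\theta$. I would split the $\theta$-integral at $0$. The pieces that remain after the collision cancellations are exactly the right-hand sides of \eqref{2.1b}--\eqref{2.1c}, namely $\int_0^{\pi}V\sin\theta\,n(t,L,\theta)\,d\theta$ and $-\int_{-\pi}^0 V\sin\theta\,n(t,-L,\theta)\,d\theta$, which cancel against the corresponding boundary-flux source terms coming from $\partial_t n_\pm$. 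The remaining transport contributions are $\int_{-\pi}^0 V\sin\theta\,n(t,L,\theta)\,d\theta$ and $-\int_0^{\pi}V\sin\theta\,n(t,-L,\theta)\,d\theta$; these are precisely the quantities that the boundary conditions \eqref{2.5}--\eqref{2.6} express in terms of $n_\pm$. Substituting \eqref{2.5} into the first and \eqref{2.6} into the second, then applying Fubini and the definition of $k_\pm$ once more, these two terms cancel against the leftover halves of the $k_\pm$-loss/gain balance. After all cancellations, $\frac{d}{dt}M(t)=0$, so $M(t)\equiv M(0)$.

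The main obstacle is purely bookkeeping: one must carefully track which half of each $\theta$-interval $[-\pi,\pi]$ a given boundary term lives on and pair it with the correct equation. In particular, the boundary conditions \eqref{2.5}--\eqref{2.6} and the source terms on the right of \eqref{2.1b}--\eqref{2.1c} involve complementary halves of the direction space ($\theta<0$ versus $\theta>0$ at $y=L$), so the transport flux at each plate splits into an "entering" part absorbed by $n_\pm$ and a "leaving" part emitted from $n_\pm$; verifying that the signs of $V\sin\theta$ and the signs in \eqref{2.1c} and \eqref{2.6} line up so that everything cancels is the only delicate point. A secondary, more technical issue is justifying differentiation under the integral sign and the Fubini swaps; for this I would either work with sufficiently smooth solutions, or interpret the identity in the weak sense by using the constant test function, in which case no regularity beyond that already assumed in the definition of a weak solution is needed.
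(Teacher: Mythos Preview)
Your proposal is correct and follows essentially the same approach as the paper: integrate each of \eqref{2.1a}--\eqref{2.1c} over its domain, use the definition \eqref{2.2} of $k$ and $k_\pm$ together with Fubini to cancel the collision terms (noting, as you do with your remark about ``leftover halves,'' that for $n_\pm$ the gain integral runs only over half of $[-\pi,\pi]$, so a residual piece survives and is then absorbed via \eqref{2.5}--\eqref{2.6}), and observe that the boundary fluxes from the transport term match the source terms on the right of \eqref{2.1b}--\eqref{2.1c}. The paper organizes the computation identically, evaluating $\frac{d}{dt}\int\!\!\int n$, $\frac{d}{dt}\int n_+$, and $\frac{d}{dt}\int n_-$ separately and summing.
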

	
	\begin{proof}
		Integrating \eqref{2.1a} in $\theta$ from $-\pi$ to $\pi$, in $y$ from $-L$ to $L$ gives
		\begin{align*}
			&\dfrac{\dd}{\dd t}\int_{-L}^{L}\!\int_{-\pi}^{\!\pi}\! n(t,y,\theta)d\theta \dd y\nonumber \\
			=&-\!\!\int_{-L}^{L}\!\int_{-\pi}^{\!\pi}\!V \sin\theta \frac{\pt n}{\pt y}d\theta \dd y-\!\!\int_{-L}^{L}\!\int_{-\pi}^{\!\pi}\!
			\left[k(y,\theta) n(t, y,\theta)-\!\!\int_{-\pi}^{\!\pi}\! K\left(y,\theta',\theta-\theta'\right) n(t,y,\theta') d\theta'\right]d\theta \dd y\\
			=&-\!\!\int_{-\pi}^{\!\pi}\!V \sin\theta[n(t,L,\theta)-n(t,-L,\theta)]d\theta\nonumber\\
			&-\!\!\int_{-L}^{L}\!\left[\int_{-\pi}^{\!\pi}\!\int_{-\pi}^{\!\pi}\!\!K(y,\theta,\theta'-\theta) n(t, y,\theta)d\theta'd\theta-\!\!\int_{-\pi}^{\!\pi}\!\int_{-\pi}^{\!\pi}\!\!K\left(y,\theta',\theta-\theta'\right) n(t,y,\theta')d\theta d\theta'\right]\!\dd y \\
			=&-\!\!\int_{-\pi}^{\!\pi}\!V \sin\theta n(t,L,\theta)d\theta+\int_{-\pi}^{\!\pi}\!V \sin\theta n(t,-L,\theta)d\theta.\nonumber
		\end{align*}

		Integrating \eqref{2.1b} with respect to $\theta$ from $0$ to $\pi$ and utilizing the boundary condition in \eqref{2.5}, we obtain 
		\begin{align*}
			\dfrac{\dd }{\dd t}\int_{0}^{\!\pi}\!\! n_+(t,\theta)d\theta 
			=&\int_{0}^{\!\pi}\!\!V \sin\theta n(t,L,\theta)d\theta-\!\!\int_{0}^{\!\pi}\!\!\left[k_+(\theta)n_{+}(t,\theta)-\!\!\int_{0}^{\!\pi}\!\!K_+\left(\theta',\theta-\theta'\right)n_{+}(t,\theta')d\theta'\right]d\theta\\
			=&\int_{0}^{\!\pi}\!\!V \sin\theta n(t,L,\theta)d\theta-\!\!\int_{-\pi}^{0}\!\int_{0}^{\!\pi}\!\!K_+\left(\theta',\theta-\theta'\right)n_{+}(t,\theta')d\theta' d\theta\\
			=&\int_{-\pi}^{\!\pi}\!V \sin\theta n(t,L,\theta)d\theta.
		\end{align*}
		Similarly, for $n_-(t,\theta)$ we have
		\begin{equation*}
			\dfrac{\dd }{\dd t}\int_{-\pi}^{0}\! n_-(t,\theta)d\theta=
			-\!\!\int_{-\pi}^{\!\pi}\!V \sin\theta n(t,-L,\theta)d\theta.
		\end{equation*}
		Summing up the above three equations yields $\dd M(t)/ \dd t=0$, which concludes the theorem.
	\end{proof}

	\section{Convergence to steady state}
	\label{sec:CVstst}
	Since the model constructed in \eqref{2.1}--\eqref{2.6} is linear, we take for granted that the weak solution of the system exists and is non-negative. We prove that the solutions of \eqref{2.1}--\eqref{2.6}, converge to a unique stationary state $m(y,\theta)$ and $m_{\pm}(\theta)$ that satisfy
	\begin{small}
		\begin{subequations}\label{3.1}
			\sym$
			&V \sin\theta \dfrac{\pt m}{\pt y}(y,\theta)+k(y,\theta) m(y,\theta)-\!\!\int_{-\pi}^{\!\pi}\!  K\left(y,\theta',\theta-\theta'\right) m(y,\theta') d\theta'=0,&&\theta\in(-\pi,\pi),~y\in (-L,L),\label{3.1a} \\
			&k_+(\theta) m_{+}(\theta)-\!\!\int_{0}^{\!\pi}\!\!  K_+\left(\theta',\theta-\theta'\right) m_{+}(\theta') d\theta'=V \sin\theta m(L,\theta),&& \theta\in(0,\pi),\label{3.1b} \\
			&k_-(\theta) m_{-}(\theta)-\!\!\int_{-\pi}^{0}\! K_-\left(\theta',\theta-\theta'\right) m_{-}(\theta') d\theta'=-V \sin\theta m(-L,\theta),&&\theta\in(-\pi,0),\label{3.1c}
			$
		\end{subequations}    
	\end{small}
	with the boundary conditions
	\begin{subequations}\label{3.2}
		\sym$
		&m(y,-\pi)=m(y,\pi), &&y\in [-L,L], \label{3.2a}\\[2mm]
		&-V\sin\theta m(L,\theta)=\int_{0}^{\!\pi}\!\! K_+\left(\theta',\theta-\theta'\right)m_{+}(\theta') d\theta', &&\theta\in(-\pi,0), \label{3.2b}\\
		&V\sin\theta m(-L,\theta)= \int_{-\pi}^{0}\! K_-\left(\theta',\theta-\theta'\right)m_{-}(\theta')d\theta', &&\theta\in(0,\pi), \label{3.2c}
		$
	\end{subequations}
	and normalization condition
	\begin{equation}
		\int_{-L}^{L}\!\int_{-\pi}^{\!\pi}\! m(y,\theta)d\theta \dd y+\int_{0}^{\!\pi}\!\!m_{+}(\theta)d\theta+\int_{-\pi}^{0}\!m_{-}(\theta)d\theta=M(0).
	\end{equation}
	
	In the subsequent part, we first recall the relative entropy estimates, following the general theory for positivity preserving equations~\cite{MICHEL20051235}, then we show a priori $L^{\infty}$ bounds, and finally the long-time convergence. 
	
	\subsection{Relative Entropy Inequality}
	Before establishing the relative entropy estimate, we define formally the relative gaps $\omega$, $\omega_{\pm}$ by
	\begin{subequations}\label{3.4}
		\sym$
		&\omega(t, y,\theta) =\frac{n(t, y,\theta)}{m(y,\theta)}, && \theta \in (-\pi,\pi),~y\in (-L,L), \\[2mm]
		&\omega_{\pm}(t,\theta) =\frac{n_{\pm}(t,\theta)}{m_{\pm}(\theta)}, && \theta \in (0,\pi), 
		$
	\end{subequations}
	For simplicity, we hereafter use $\omega_{\pm L}$, $n_{\pm L}$, $m_{\pm L}$ to denote $\omega(t,\pm L,\theta)$, $n(t,\pm L,\theta)$, $m(\pm L,\theta)$ respectively. It is important to note that $\omega_{\pm L}$, $n_{\pm L}$, $m_{\pm L}$ are different from $\omega_{\pm}$, $n_{\pm}$, $m_{\pm}$. At the same time, we use $\omega'$, $n'$,  $m'$ to represent functions that depend on the variable $\theta'$.
	
	\begin{theorem}
		For any convex function $H\in C^{2}(\mathbb{R})$, the relative gaps $\omega$ and $\omega_{\pm}$ defined in \eqref{3.4} satisfy
		\begin{equation}
			\begin{aligned}
				&\frac{\dd}{\dd t}\left[\int_{-L}^{L}\! \int_{-\pi}^{\!\pi}\!m H(\omega) \dd \theta \dd y+\int_{0}^{\!\pi}\!\! m_{+} H\left(\omega_{+}\right) \dd \theta+\int_{-\pi}^{0}\! m_{-} H\left(\omega_{-}\right) \dd\theta\right]
				\\
				& \qquad \qquad =D_{H}^{(1)}(t)+D_{H,+}^{(1)}(t)+D_{H,-}^{(1)}(t)+D_{H,+}^{(2)}(t)+D_{H,-}^{(2)}(t)+D_{H,+}^{(3)}(t)+D_{H,-}^{(3)}(t)\leq 0, \label{3.5}
			\end{aligned}
		\end{equation}
		where the dissipations due to the run-and-tumble process in the interior, on upper/lower boundaries, are respectively
		\begin{subequations}\label{3.6}
			\sym$
			D_{H}^{(1)}(t)=&-\!\!\int_{-L}^{L}\!\int_{-\pi}^{\!\pi}\!\int_{-\pi}^{\!\pi}\!K\left(y,\theta',\theta-\theta'\right)m'\left[H(\omega')-H(\omega)-H'(\omega)(\omega'-\omega)\right]d\theta'd\theta \dd y,\label{DH1} \\
			D_{H,+}^{(1)}(t)=&-\!\!\int_{0}^{\!\pi}\!\!\int_{0}^{\!\pi}\!\!K_+\left(\theta',\theta-\theta'\right)m_{+}'[H(\omega_{+}')-H(\omega_{+})-H^{\prime}\left(\omega_{+}\right)(\omega_{+}'-\omega_{+})]d\theta'd\theta, \label{DH1p} \\
			D_{H,-}^{(1)}(t)=&-\!\!\int_{-\pi}^{0}\!\int_{-\pi}^{0}\!K_-\left(\theta',\theta-\theta'\right)m_{-}'[H(\omega_{-}')-H(\omega_{-})-H^{\prime}\left(\omega_{-}\right)(\omega_{-}'-\omega_{-})]d\theta'd\theta, \label{DH1m}
			$
		\end{subequations}
		the dissipations of state transition from interior to upper/lower boundary are
		\begin{subequations}\label{3.7}
			\sym$D_{H,+}^{(2)}(t)=&-\!\!\int_{0}^{\!\pi}\!\! V \sin\theta m_{+L}\left[H\left(\omega_{+L}\right)-H\left(\omega_{+}\right)-H^{\prime}\left(\omega_{+}\right)\left(\omega_{+L}-\omega_{+}\right)\right] \dd \theta, \label{DH2p}\\
			D_{H,-}^{(2)}(t)=-&\!\!\int_{-\pi}^{0}\!V \sin\theta m_{-L}\left[H\left(\omega_{-L}\right)-H\left(\omega_{-}\right)-H^{\prime}\left(\omega_{-}\right)\left(\omega_{-L}-\omega_{-}\right)\right] \dd \theta, \label{DH2m}$
		\end{subequations}
		and the dissipations of state transition from the upper/lower boundary to the interior are
		\begin{subequations}\label{3.8}
			\sym$
			D_{H,+}^{(3)}(t)=&-\!\!\int_{-\pi}^{0}\!\int_{0}^{\!\pi}\!\!K_+\left(\theta',\theta-\theta'\right)m_{+}'[H(\omega_{+}')-H(\omega_{+L})-H'\left(\omega_{+L}\right)(\omega_{+}'-\omega_{+L})]d\theta'd\theta, \label{DH3p} \\
			D_{H,-}^{(3)}(t)=&-\!\!\int_{0}^{\!\pi}\!\!\int_{-\pi}^{0}\!K_-\left(\theta',\theta-\theta'\right)m_{-}'[H(\omega_{-}')-H(\omega_{-L})-H^{\prime}\left(\omega_{-L}\right)(\omega_{-}'-\omega_{-L})]d\theta'd\theta. \label{DH3m}
			$
		\end{subequations}
	\end{theorem}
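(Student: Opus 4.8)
The plan is to adapt the general relative entropy (GRE) computation of \cite{MICHEL20051235} to this coupled interior/boundary system, viewing $(n,n_{+},n_{-})$ as one density on the disjoint union of the interior phase space $(-L,L)\times(-\pi,\pi)$ with the two boundary angular sectors $(0,\pi)$ and $(-\pi,0)$. First I would rewrite everything in terms of the relative gaps: inserting $n=m\,\omega$, $n_{\pm}=m_{\pm}\,\omega_{\pm}$ into \eqref{2.1a}--\eqref{2.1c} and subtracting $\omega$ (resp.\ $\omega_{\pm}$) times the stationary equations \eqref{3.1a}--\eqref{3.1c}, the zeroth-order terms $k,k_{\pm}$ and the stationary drift $V\sin\theta\,\pt_y m$ drop out, and one is left with purely dissipative relaxation equations: $\pt_t\omega+V\sin\theta\,\pt_y\omega=\frac1m\int_{-\pi}^{\pi}K(y,\theta',\theta-\theta')\,m'(\omega'-\omega)\,d\theta'$ in the interior, and $\pt_t\omega_{+}=\frac1{m_{+}}\int_0^{\pi}K_{+}(\theta',\theta-\theta')\,m_{+}'(\omega_{+}'-\omega_{+})\,d\theta'+\frac{V\sin\theta\,m_{+L}}{m_{+}}(\omega_{+L}-\omega_{+})$ on the top sector, with its mirror image on the bottom. (This step is formal, presupposing $m,m_{\pm}>0$ and enough regularity of the weak solution; that is part of what the a priori bounds below provide.)

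Next, following the GRE recipe, I would multiply the interior equation by $m\,H'(\omega)$ and recast it in conservative form, $\pt_t\!\big(mH(\omega)\big)+\pt_y\!\big(V\sin\theta\,mH(\omega)\big)=V\sin\theta\,\pt_y m\,H(\omega)+H'(\omega)\!\int K'm'(\omega'-\omega)\,d\theta'$, replace $V\sin\theta\,\pt_y m$ by $\int K'm'\,d\theta'-k\,m$ using \eqref{3.1a}, and integrate over $(-L,L)\times(-\pi,\pi)$ (the periodic condition \eqref{2.4} makes the $\theta$-integration clean). The $y$-divergence produces the boundary flux $\int_{-\pi}^{\pi}V\sin\theta\,[m_{+L}H(\omega_{+L})-m_{-L}H(\omega_{-L})]\,d\theta$, while on the right-hand side Fubini together with the relabelling $\theta\leftrightarrow\theta'$ and the definition \eqref{2.2} reassembles $\int\!\!\int K'm'\{[H(\omega)-H(\omega')]+H'(\omega)(\omega'-\omega)\}$ into exactly $D_{H}^{(1)}$. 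The analogous computation on $\omega_{\pm}$ then delivers the remaining six terms, but here the key point is to split the total tumbling rate $k_{+}(\theta)=\int_{0}^{\pi}K_{+}(\theta,\theta'-\theta)\,d\theta'+\int_{-\pi}^{0}K_{+}(\theta,\theta'-\theta)\,d\theta'$ into a ``tumble and remain at the wall'' part (which, after the same $\theta\leftrightarrow\theta'$ symmetrization, yields $D_{H,+}^{(1)}$) and a ``tumble and detach'' part; substituting the nonstandard boundary condition \eqref{2.5} (equivalently \eqref{3.2b}) for the detachment flux turns the detach part into $D_{H,+}^{(3)}$ plus a flux supported on $\theta\in(-\pi,0)$, and the injection term contributes $D_{H,+}^{(2)}$ plus a flux supported on $\theta\in(0,\pi)$; together these fluxes are $\int_{-\pi}^{\pi}V\sin\theta\,m_{+L}H(\omega_{+L})\,d\theta$, and symmetrically for the bottom.

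Then I would add the three integrated identities: the interior flux $-\int_{-\pi}^{\pi}V\sin\theta\,[m_{+L}H(\omega_{+L})-m_{-L}H(\omega_{-L})]\,d\theta$ cancels exactly the fluxes coming from the $\omega_{\pm}$ identities, leaving precisely \eqref{3.5}. Finally, each of the seven dissipation terms has the shape $-\int(\mathrm{rate})\,(\mathrm{stationary\ density})\,[H(b)-H(a)-H'(a)(b-a)]$, with the rate one of $K,K_{\pm}\ge0$ or $\pm V\sin\theta$ on the sector where it is nonnegative, the stationary density $m$ or $m_{\pm}\ge0$, and the bracket $\ge0$ by convexity of $H$; hence every term, and the sum, is $\le0$.

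The main obstacle is the bookkeeping in the second step: decomposing $k_{\pm}$ and feeding the boundary conditions \eqref{2.5}--\eqref{2.6} into exactly the right places so that the detachment contributions reorganize into $D_{H,\pm}^{(3)}$ and all residual boundary fluxes telescope away. One must also be careful with signs, since $V\sin\theta$ is positive on the absorbing sector $(0,\pi)$ of the top wall but negative on the absorbing sector $(-\pi,0)$ of the bottom wall, which is exactly what makes $D_{H,\pm}^{(2)}$ and $D_{H,\pm}^{(3)}$ come out with the right (nonpositive) sign. The rest --- integration by parts in $y$, Fubini, relabelling of dummy variables --- is routine.
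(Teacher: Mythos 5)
Your proposal follows essentially the same route as the paper's proof: substitute $n=m\,\omega$, $n_{\pm}=m_{\pm}\,\omega_{\pm}$, use the stationary equations \eqref{3.1a}--\eqref{3.1c} to reduce to dissipative relaxation equations, multiply by $H'$, integrate, split $k_{\pm}$ into the retained and detaching parts, feed the boundary conditions \eqref{2.5}--\eqref{2.6} together with \eqref{3.2b}--\eqref{3.2c} into the detachment flux to produce $D_{H,\pm}^{(3)}$, and observe that the residual boundary fluxes $\int_{-\pi}^{\pi}V\sin\theta\,m_{\pm L}H(\omega_{\pm L})\,d\theta$ cancel against the interior transport flux, with nonpositivity following from convexity of $H$. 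Your closing remark about the sign of $V\sin\theta$ on the two absorbing sectors is precisely the point that makes the boundary dissipation terms come out nonpositive, so the plan has no gap.
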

	\begin{proof}
		We set
		\eq$E=\frac{\dd}{\dd t}\int_{-L}^{L}\! \int_{-\pi}^{\!\pi}\!m H(\omega) \dd \theta \dd y,\quad  E_+=\frac{\dd}{\dd t}\int_{0}^{\!\pi}\!\! m_{+} H\left(\omega_{+}\right) \dd \theta,\quad E_-=\frac{\dd}{\dd t}\int_{-\pi}^{0}\! m_{-} H\left(\omega_{-}\right) \dd\theta.$ 
		We give an expression for each of those terms in the subsequent parts.
		\begin{itemize}
			\item \textbf{The expression for $E$.}
			Plugging $n(t,y,\theta)=m(y,\theta) \omega(t,y,\theta)$ into \eqref{2.1a}, we get
			\eq$\begin{aligned}
				m\frac{\pt \omega}{\pt t}=&-V \sin\theta\left[\frac{\pt m}{\pt y}\omega+m\frac{\pt \omega}{\pt y}\right]-k(y,\theta)\omega m+\int_{-\pi}^{\!\pi}\! K\left(y,\theta',\theta-\theta'\right)m'\omega'  d\theta'\\
				\xlongequal{\eqref{3.1a}}&-V \sin\theta m\frac{\pt \omega}{\pt y}-\!\!\int_{-\pi}^{\!\pi}\! K\left(y,\theta',\theta-\theta'\right)m'(\omega-\omega')  d\theta',
			\end{aligned}\label{2.14}$
			Multiplying both sides of \eqref{2.14} by $H^{\prime}\left(\omega\right)$ gives
			\eq$
			\begin{aligned}
				m \frac{\pt H(\omega)} {\pt t}=&-V \sin\theta\left[mH'(\omega) \frac{\pt \omega}{\pt y}+\frac{\pt m}{\pt y}H(\omega)\right]+V \sin\theta \frac{\pt m}{\pt y}H(\omega)\\
				&-\!\!\int_{-\pi}^{\!\pi}\!K\left(y,\theta',\theta-\theta'\right)m'H'(\omega)(\omega-\omega')d\theta'\\
				\xlongequal{\eqref{3.1a}}&-V \sin\theta \frac{\pt [mH(\omega)]}{\pt y}-k(y,\theta)mH(\omega)\\
				&+\int_{-\pi}^{\!\pi}\!K\left(y,\theta',\theta-\theta'\right)m'[H(\omega)-H'(\omega)(\omega-\omega')]d\theta'. \label{3.15}
			\end{aligned}
			$
			Integrating both sides of \eqref{3.15} in $y$ from $-L$ to $L$, in $\theta$ from $-\pi$ to $\pi$ yields (we use $\theta'$ instead of $\theta$ in second integral)
			\eq$
			\begin{aligned}
				E=&\frac{\dd}{\dd t}\int_{-L}^{L}\! \int_{-\pi}^{\!\pi}\!m H(\omega) \dd \theta \dd y  \\
				=& -\!\!\int_{-L}^{L}\!\int_{-\pi}^{\!\pi}\!V \sin\theta \frac{\pt [m H(\omega)]}{\pt y}\dd\theta \dd y-\!\!\int_{-L}^{L}\!\int_{-\pi}^{\!\pi}\!k(y,\theta')m'H(\omega')\dd\theta' \dd y \\
				&+\int_{-L}^{L}\!\int_{-\pi}^{\!\pi}\!\int_{-\pi}^{\!\pi}\! K\left(y,\theta',\theta-\theta'\right)[H(\omega)-H'(\omega)(\omega-\omega')]m'd\theta'd\theta \dd y \\
				\xlongequal{\eqref{2.2}}& -\!\!\int_{-\pi}^{\!\pi}\!V \sin\theta m_{+L}H(\omega_{+L})d\theta+\int_{-\pi}^{\!\pi}\!V \sin\theta m_{-L}H(\omega_{-L})d\theta \\
				&-\!\!\int_{-L}^{L}\!\int_{-\pi}^{\!\pi}\!\int_{-\pi}^{\!\pi}\!K\left(y,\theta',\theta-\theta'\right)m'\left[H(\omega')-H(\omega)-H'(\omega)(\omega'-\omega)\right]d\theta'd\theta \dd y \\
				\xlongequal{\eqref{DH1}}& -\!\!\int_{-\pi}^{\!\pi}\!V \sin\theta m_{+L}H(\omega_{+L})d\theta+\int_{-\pi}^{\!\pi}\!V \sin\theta m_{-L}H(\omega_{-L})d\theta+D_H^{(1)}(t) 
			\end{aligned}\label{2.16}
			$
			\item \textbf{Expression for $E_+$.}
			Plugging $n_+(t,\theta)=m_+(\theta) \omega_+(t,\theta)$ into \eqref{2.1b} gives
			\eq$
			\begin{aligned}
				m_{+} \frac{\pt \omega_{+}}{\pt t}=& -k_+(\theta)m_{+}\omega_{+}+\int_{0}^{\!\pi}\!\! K_+\left(\theta',\theta-\theta'\right)m_{+}'\omega_{+}'d\theta'+V\sin\theta m_{+L}\omega_{+L} \\
				\xlongequal{\eqref{3.1b}}&V\sin\theta m_{+L}(\omega_{+L}-\omega_{+})-
				\int_{0}^{\!\pi}\!\! K_+\left(\theta',\theta-\theta'\right)m_{+}'(\omega_{+}-\omega_{+}')d\theta'.
			\end{aligned}
			\label{2.17}
			$
			Multiplying both sides of \eqref{2.17} by $H'(\omega_{+})$, and integrating both sides in $\theta$ from $0$ to $\pi$, we obtain
			\begin{align*}
				E_+=&\frac{\dd}{\dd t}\int_{0}^{\!\pi}\!\! m_{+} H\left(\omega_{+}\right) \dd \theta\\ =&\int_{0}^{\!\pi}\!\!V\sin\theta m_{+L}H'(\omega_+)(\omega_{+L}-\omega_{+})\dd\theta-\!\!\int_{0}^{\!\pi}\!\!\int_{0}^{\!\pi}\!\! K_+\left(\theta',\theta-\theta'\right)m_{+}'H'(\omega_+)(\omega_{+}-\omega_{+}')d\theta'\dd\theta' \\
				=&-\!\!\int_{0}^{\!\pi}\!\! V \sin\theta m_{+L}\left[H\left(\omega_{+L}\right)-H\left(\omega_{+}\right)-H^{\prime}\left(\omega_{+}\right)\left(\omega_{+L}-\omega_{+}\right)\right] \dd \theta \\
				&+\int_{0}^{\!\pi}\!\! V \sin\theta m_{+L}\left[H\left(\omega_{+L}\right)-H\left(\omega_{+}\right)\right]\dd \theta \\
				&-\!\!\int_{0}^{\!\pi}\!\!\int_{0}^{\!\pi}\!\!K_+\left(\theta',\theta-\theta'\right)m_{+}'\left[H(\omega_{+}')-H(\omega_{+})-H^{\prime}\left(\omega_{+}\right)(\omega_{+}'-\omega_{+})\right]d\theta'd\theta \\
				&+\int_{0}^{\!\pi}\!\!\int_{0}^{\!\pi}\!\!K_+\left(\theta',\theta-\theta'\right)m_{+}'\left[H(\omega_{+}')-H(\omega_{+})\right]d\theta'd\theta \\
				\xlongequal{\eqref{DH1p},\eqref{DH2p}}& D_{H,+}^{(1)}(t)+D_{H,+}^{(2)}(t)-\!\!\int_{0}^{\!\pi}\!\!\left[V \sin\theta m_{+L}+\int_{0}^{\!\pi}\!\!K_+\left(\theta',\theta-\theta'\right)m_{+}'d\theta'\right]H(\omega_{+})d\theta \\
				&+\int_{0}^{\!\pi}\!\! V \sin\theta m_{+L}H\left(\omega_{+L}\right)\dd\theta+\int_{0}^{\!\pi}\!\!\int_{0}^{\!\pi}\!\!K_+\left(\theta',\theta-\theta'\right)m_{+}'H(\omega_{+}')d\theta'd\theta 
			\end{align*}
			Using \eqref{3.1b}, the expression of $E_+$ above can be further simplified, which reads
			\eq$
			\begin{aligned}
				& E_+-D_{H,+}^{(1)}(t)-D_{H,+}^{(2)}(t)-\!\!\int_{0}^{\!\pi}\!\! V \sin\theta m_{+L}H\left(\omega_{+L}\right)\dd \theta \\
				\xlongequal{\eqref{3.1b}}&\int_{0}^{\!\pi}\!\!\int_{0}^{\!\pi}\!\!K_+\left(\theta',\theta-\theta'\right)m_{+}'H(\omega_{+}')d\theta'd\theta-\!\!\int_{0}^{\!\pi}\!\!k_+(\theta)m_+ H(\omega_{+})d\theta \\
				\xlongequal{\eqref{2.2}}&\int_{0}^{\!\pi}\!\!\int_{0}^{\!\pi}\!\!K_+\left(\theta',\theta-\theta'\right)m_{+}'H(\omega_{+}')d\theta'd\theta-\!\!\int_{0}^{\!\pi}\!\!\left[\int_{-\pi}^{\!\pi}\!K_+\left(\theta',\theta-\theta'\right)d\theta\right] m_{+}'H(\omega_{+}')d\theta'\\
				=&-\!\!\int_{-\pi}^{0}\!\int_{0}^{\!\pi}\!\!K_+\left(\theta',\theta-\theta'\right)m_{+}'H(\omega_{+}')d\theta'd\theta
			\end{aligned}\label{2.19}
			$
			Inspired by the right hand side of \eqref{2.16}, we subtract $\int_{-\pi}^{0}\! V \sin\theta m_{+L}H\left(\omega_{+L}\right)\dd \theta$ from both sides of \eqref{2.19} to obtain 
			\begin{small}
				\eq$\label{2.20}
				\begin{aligned}
					&E_+-D_{H,+}^{(1)}(t)-D_{H,+}^{(2)}(t)-\!\!\int_{-\pi}^{\!\pi}\! V \sin\theta m_{+L}H\left(\omega_{+L}\right)\dd \theta \\
					=&-\!\!\int_{-\pi}^{0}\! V \sin\theta m_{+L}H\left(\omega_{+L}\right)\dd \theta-\!\!\int_{-\pi}^{0}\!\int_{0}^{\!\pi}\!\!K_+\left(\theta',\theta-\theta'\right)m_{+}'H(\omega_{+}')d\theta'd\theta \\
					\xlongequal{\eqref{3.2b}}&-\!\!\int_{-\pi}^{0}\!\int_{0}^{\!\pi}\!\!K_+\left(\theta',\theta-\theta'\right)m_{+}'\left[H(\omega_{+}')-H(\omega_{+L})\right]d\theta'd\theta \\
					=&-\!\!\int_{-\pi}^{0}\!\int_{0}^{\!\pi}\!\!K_+\left(\theta',\theta-\theta'\right)m_{+}'[H(\omega_{+}')-H(\omega_{+L})-H'\left(\omega_{+L}\right)(\omega_{+}'-\omega_{+L})]d\theta'd\theta \\
					&-\!\!\int_{-\pi}^{0}\!\int_{0}^{\!\pi}\!\!K_+\left(\theta',
					\theta-\theta'\right)m_{+}'(\omega_{+}'-\omega_{+L})d\theta'H'(\omega_{+L})d\theta \\
					\xlongequal{\eqref{DH3p}}&D_{H,+}^{(3)}(t)-\!\!\int_{-\pi}^{0}\!\int_{0}^{\!\pi}\!\!K_+\left(\theta',
					\theta-\theta'\right)n_{+}'d\theta'H'(\omega_{+L})d\theta+\int_{-\pi}^{0}\!\int_{0}^{\!\pi}\!\!K_+\left(\theta',
					\theta-\theta'\right)m_{+}'d\theta'H'(\omega_{+L})\omega_{+L}d\theta. \\
				\end{aligned}
				$
			\end{small}
			By boundary conditions \eqref{2.5} and \eqref{3.2b}, the last two terms on the right hand side of \eqref{2.20} cancel each other. More precisely, we have
			\eq$
			\begin{aligned}
				\int_{-\pi}^{0}\!\int_{0}^{\!\pi}&\!\!K_+\left(\theta',
				\theta-\theta'\right)n_{+}'d\theta'H'(\omega_{+L})d\theta\xlongequal{\eqref{2.5}}\int_{-\pi}^{0}\!-H'(\omega_{+L})V \sin\theta n_{+L}d\theta\\
				=&\int_{-\pi}^{0}\!-H'(\omega_{+L})V \sin\theta m_{+L}\omega_{+L}d\theta\xlongequal{\eqref{3.2b}}\int_{-\pi}^{0}\!\int_{0}^{\!\pi}\!\!K_+\left(\theta',
				\theta-\theta'\right)m_{+}'d\theta'H'(\omega_{+L})\omega_{+L}d\theta.
			\end{aligned}
			$
			Hence $E_+$ can be expressed as
			\eq$E_+=D_{H,+}^{(1)}(t)+D_{H,+}^{(2)}(t)+D_{H,+}^{(3)}(t)+\int_{-\pi}^{\!\pi}\! V \sin\theta m_{+L}H\left(\omega_{+L}\right)\dd \theta.\label{2.22}$
			\item \textbf{Expression for $E_-$.}
			Similarly, we can show that 
			\eq$E_-=D_{H,-}^{(1)}(t)+D_{H,-}^{(2)}(t)+D_{H,-}^{(3)}(t)-\!\!\int_{-\pi}^{\!\pi}\! V \sin\theta m_{-L}H\left(\omega_{-L}\right)\dd \theta.\label{2.23}$
		\end{itemize}
		Summing up \eqref{2.16}, \eqref{2.22}, \eqref{2.23} yields the equality in \eqref{3.5}. Then,  since the kernel functions $K$, $K_{\pm}$ and the stationary state distributions $m$, $m_{\pm}$ are all non-negative, by the Jensen inequality, we conclude that all dissipation terms on the right hand side of \eqref{3.5} are non-positive, hence the theorem is concluded.
	\end{proof}
	
	\subsection{L-infinity estimates and long time convergence}
	
	The relative entropy inequality \eqref{3.5} has an immediate consequence. When the relative gaps are bounded initially, it stays bounded for all times and the solution will weakly converge to its steady state solution.
	\begin{corollary}[$L^{\infty}$ estimates]
		\label{corollary2.2} 
		Assume there is a constant $\Gamma>0$ such that
		$$
		\|\omega(0, \cdot, \cdot)\|_{L^{\infty}([-L,L]\times[-\pi,\pi])} \leq \Gamma, \quad\left\|\omega_{\pm}(0, \cdot)\right\|_{L^{\infty}\left([0,\pm\pi]\right)} \leq \Gamma.
		$$
		Then for all $t \geq 0$, we have
		$$
		\|\omega(t, \cdot, \cdot)\|_{L^{\infty}([-L,L]\times[-\pi,\pi])} \leq \Gamma, \quad\left\|\omega_{\pm}(t, \cdot)\right\|_{L^{\infty}\left([0,\pm\pi]\right)} \leq \Gamma .
		$$
	\end{corollary}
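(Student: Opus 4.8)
The plan is to test the relative entropy inequality \eqref{3.5} with a convex function $H$ that "sees" only the excess of $\omega$ above the prescribed threshold $\Gamma$. A convenient choice is a truncated quadratic $H_\Gamma$ with $H_\Gamma(s)=0$ for $s\le\Gamma$, $H_\Gamma(s)=(s-\Gamma)^2$ for $\Gamma\le s\le R$, and $H_\Gamma(s)$ equal to the tangent line at $s=R$ for $s\ge R$, for some fixed $R>\Gamma$. This $H_\Gamma$ is convex, $C^1$ with bounded derivative, nonnegative, and $\{H_\Gamma=0\}=(-\infty,\Gamma]$; the truncation at $R$ only serves to keep all integrals finite. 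Since \eqref{3.5} was proven for $H\in C^2$, the first step is to extend it to such an $H_\Gamma$ by approximating $H_\Gamma$ from below by smooth convex functions $H_\varepsilon\uparrow H_\Gamma$ with $H_\varepsilon'\to H_\Gamma'$, applying \eqref{3.5} to each $H_\varepsilon$, and passing to the limit in each dissipation term $D_H^{(1)}, D_{H,\pm}^{(i)}$ and in the time derivative. Here the linear case $H(s)=s$ (equivalently Theorem~\ref{theorem-1.1}) gives $\omega,\omega_\pm\in L^1$ against the measures $m\,\dd\theta\,\dd y$ and $m_\pm\,\dd\theta$, so the at-most-linear growth of $H_\Gamma$ keeps every integral finite and dominated convergence applies.

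Next, set $\Phi_\Gamma(t)=\int_{-L}^{L}\int_{-\pi}^{\pi} m\,H_\Gamma(\omega)\,\dd\theta\,\dd y+\int_{0}^{\pi} m_{+}H_\Gamma(\omega_{+})\,\dd\theta+\int_{-\pi}^{0} m_{-}H_\Gamma(\omega_{-})\,\dd\theta$. By the extended entropy inequality, $\tfrac{\dd}{\dd t}\Phi_\Gamma(t)\le 0$. The hypothesis says $\omega(0,\cdot,\cdot)\le\Gamma$ and $\omega_\pm(0,\cdot)\le\Gamma$ a.e.\ (recall $\omega=n/m\ge 0$, so only the upper bound is at stake), hence $H_\Gamma(\omega(0))=H_\Gamma(\omega_\pm(0))=0$ and $\Phi_\Gamma(0)=0$. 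Integrating the differential inequality in time yields $\Phi_\Gamma(t)\le\Phi_\Gamma(0)=0$ for every $t\ge 0$.

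Finally, since the stationary profiles $m,m_\pm$ are strictly positive and $H_\Gamma\ge 0$, we have $\Phi_\Gamma(t)\ge 0$; combined with the previous step this forces $\Phi_\Gamma(t)=0$. Because $H_\Gamma(s)>0$ exactly for $s>\Gamma$, it follows that $\omega(t,\cdot,\cdot)\le\Gamma$ a.e.\ and $\omega_\pm(t,\cdot)\le\Gamma$ a.e., and together with $\omega,\omega_\pm\ge 0\ge-\Gamma$ this is precisely $\|\omega(t,\cdot,\cdot)\|_{L^\infty}\le\Gamma$ and $\|\omega_\pm(t,\cdot)\|_{L^\infty}\le\Gamma$. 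Running the same argument with an arbitrary threshold $\Gamma'\ge\|\omega(0)\|_{L^\infty}$ and optimizing over $\Gamma'$ even gives the sharper non-expansiveness $\|\omega(t)\|_{L^\infty}\le\|\omega(0)\|_{L^\infty}$.

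I expect the main obstacle to be the first step: rigorously licensing the non-$C^2$, merely bounded-derivative test function $H_\Gamma$ in \eqref{3.5}, i.e.\ verifying that the approximation $H_\varepsilon\uparrow H_\Gamma$ passes to the limit in all the dissipation integrals and in $\tfrac{\dd}{\dd t}\Phi_\Gamma$. A secondary point is that the final implication uses $m,m_\pm>0$; this strict positivity of the stationary state follows from an irreducibility/maximum-principle argument for the linear system \eqref{3.1}--\eqref{3.2} and is in any case implicit in writing $\omega=n/m$. Everything else is a routine exploitation of the sign and the monotonicity of $\Phi_\Gamma$.
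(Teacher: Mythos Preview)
Your proposal is correct and follows the same core idea as the paper: plug into the relative entropy inequality \eqref{3.5} a convex function that vanishes on $(-\infty,\Gamma]$ and is strictly positive on $(\Gamma,\infty)$, observe that the resulting entropy functional is nonincreasing and starts at zero, and conclude.

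The one genuine difference is in the choice of $H$. You take a truncated quadratic $H_\Gamma$, which is only $C^1$, and then spend your ``main obstacle'' on an approximation argument to extend \eqref{3.5} beyond $C^2$ test functions. The paper sidesteps this entirely by choosing $H(x)=\big[(x-\Gamma)_+\big]^4$: this function is already in $C^2(\mathbb{R})$ (indeed $C^3$), convex, nonnegative, and vanishes exactly on $(-\infty,\Gamma]$, so \eqref{3.5} applies directly with no smoothing step needed. Your truncation at level $R$ to control growth is likewise unnecessary once one is content to work at the same level of rigor as the rest of the paper. So your argument is not wrong, just longer than it needs to be; replacing your $H_\Gamma$ by $(x-\Gamma)_+^4$ collapses the whole first step. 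Your remark on needing $m,m_\pm>0$ a.e.\ to pass from $\int m H(\omega)=0$ to $\omega\le\Gamma$ is well taken and applies equally to the paper's version.
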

	\begin{proof}
		Choose $H(x)=\left[(x-\Gamma)_{+}\right]^{4} \in\mathcal{C}^{2}(\mathbb{R})$ in \eqref{3.5}, we get
		\begin{align}
			\frac{\dd }{\dd t}\left[\int_{-L}^{L}\! \int_{-\pi}^{\!\pi}\! m(\omega-\Gamma)_{+}^{4} \dd \theta \dd y+\int_{0}^{\!\pi}\!\! m_{+}\left(\omega_{+}-\Gamma\right)_{+}^{4} \dd \theta+\int_{-\pi}^{0}\! m_{-}\left(\omega_{-}-\Gamma\right)_{+}^{4} \dd \theta\right] \leq 0 .\nonumber
		\end{align}
		Moreover, we can obtain
		\begin{align}
			\int_{-L}^{L}\! \int_{-\pi}^{\!\pi}& \! m(\omega(T, y,\theta)-\Gamma)_{+}^{4} \dd \theta \dd y+\int_{0}^{\!\pi}\!\! m_{+}\left(\omega_{+}(T,\theta)-\Gamma\right)_{+}^{4} \dd \theta+\int_{-\pi}^{0}\! m_{-}\left(\omega_{-}(T,\theta)-\Gamma\right)_{+}^{4} \dd \theta \\
			&\leq \int_{-L}^{L}\! \int_{-\pi}^{\!\pi}\! m(\omega(0, y,\theta)-\Gamma)_{+}^{4} \dd \theta \dd y+\int_{0}^{\!\pi}\!\! m_{+}\left(\omega_{+}(0,\theta)-\Gamma\right)_{+}^{4} \dd \theta+\int_{-\pi}^{0}\! m_{-}\left(\omega_{-}(0,\theta)-\Gamma\right)_{+}^{4} \dd \theta=0. \nonumber
		\end{align}
		Since $m$ and $m_{\pm}$ are non-negative, we can conclude the result.
	\end{proof}
	
	To prove the long time convergence, we first give some definitions. For $k \in \mathbb{N}$, we define the time shift 
	\begin{align}
		\omega_{k}(t, y,\theta)=\omega(t+k, y,\theta) \quad \theta\in [-\pi,\pi],\ y\in[-L,L]; \quad \omega_{k, \pm}(t,\theta)=\omega_{\pm}(t+k,\theta) \quad \theta\in [0,\pm\pi].
	\end{align}
	From Corollary \ref{corollary2.2}, $\left\{\omega_{k}\right\},\left\{\omega_{k, \pm}\right\}$ are uniformly bounded in $L^{\infty}$. By weak compactness of  $L^{\infty}$ space, there exists a subsequence $\left\{n_{k}\right\}_{k \in \mathbb{N}}$ and limit function $\omega_{\infty} \in L^{\infty}(\mathbb{R} \times [-L,L]\times[-\pi,\pi]),~\omega_{\infty, \pm} \in L^{\infty}\left(\mathbb{R} \times [0,\pm\pi]\right)$, such that for all test functions $\phi(t, y,\theta)\in L^{1}(\mathbb{R} \times [-L,L]\times[-\pi,\pi])$ and $\phi_{\pm}(t,\theta) \in L^{1}\left(\mathbb{R} \times [0,\pm\pi]\right)$ and any $T \in \mathbb{R}$,
	\eq$
	\begin{aligned}
		\lim\limits _{n_{k} \rightarrow \infty} \int_{T}^{\infty}\! \int_{-L}^{L}\! \int_{-\pi}^{\!\pi}\! \phi \omega_{n_{k}} \dd \theta \dd y \dd t &=\int_{T}^{\infty}\! \int_{-L}^{L}\! \int_{-\pi}^{\!\pi}\! \phi \omega_{\infty} \dd \theta \dd y \dd t, \\
		\lim\limits _{n_{k} \rightarrow \infty} \int_{T}^{\infty}\! \int_{0}^{\pm \pi}\! \phi_{\pm} \omega_{n_{k}, \pm} \dd \theta \dd t &=\int_{T}^{\infty}\! \int_{0}^{\pm \pi}\! \phi_{\pm} \omega_{\infty, \pm} \dd \theta \dd t .
	\end{aligned}
	$
	We prove later that $\omega_{\infty}, \omega_{\infty, \pm}$ are  constant-valued functions.
	
	\begin{theorem}[Long time convergence]
		\label{theorem-3.1}~Assume initial data  $\omega(0, \cdot, \cdot)$ and $\omega_{\pm}(0, \cdot)$ are uniformly bounded in $L^{\infty}$, then
		\begin{equation}
			\omega_{\infty}=1, \quad \omega_{\infty, \pm}=1, \quad \text { a.e. }
		\end{equation}
		Furthermore, for any test functions $\phi(y,\theta) \in L^{1}([-L,L]\times[-\pi,\pi])$, $\phi_{\pm}(\theta) \in L^{1}([0,\pm \pi])$, one has
		\begin{align}
			\lim\limits _{t \rightarrow+\infty}& \int_{-L}^{L}\! \int_{-\pi}^{\!\pi}\! \big(\omega(t, y,\theta)-1\big) \phi(y,\theta) \dd \theta \dd y=\lim\limits _{t \rightarrow+\infty} \int_{0}^{\!\pi}\!\! \big(\omega_{+}(t,\theta)-1\big) \phi_{+}(\theta) \dd \theta=\lim\limits _{t \rightarrow+\infty} \int_{-\pi}^{0}\! \big(\omega_{-}(t,\theta)-1\big) \phi_{-}(\theta) \dd \theta=0.
		\end{align}
	\end{theorem}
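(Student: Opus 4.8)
The strategy is the standard GRE (generalized relative entropy) argument for long-time convergence, using the entropy dissipation identity \eqref{3.5} together with the $L^\infty$ bound from Corollary \ref{corollary2.2}. First I would fix the convex function $H(x) = (x-1)^2$ (which is in $C^2(\R)$), and define the entropy functional
\begin{equation*}
	\mathcal{E}(t) = \int_{-L}^{L}\!\int_{-\pi}^{\!\pi}\! m\, H(\omega)\, \dd\theta\, \dd y + \int_{0}^{\!\pi}\!\! m_+ H(\omega_+)\, \dd\theta + \int_{-\pi}^{0}\! m_- H(\omega_-)\, \dd\theta .
\end{equation*}
By \eqref{3.5}, $\mathcal{E}(t)$ is non-increasing, bounded below by $0$, and (using the $L^\infty$ bound) bounded above uniformly; hence $\mathcal{E}(t)$ converges as $t\to\infty$ and, integrating \eqref{3.5} in time, the total dissipation $\int_0^\infty \big(D_H^{(1)}+D_{H,\pm}^{(1)}+D_{H,\pm}^{(2)}+D_{H,\pm}^{(3)}\big)\,\dd t$ is finite. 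Since each dissipation term is non-positive, each must tend to $0$ along a subsequence of times; more usefully, applying this to the time shifts $\omega_k, \omega_{k,\pm}$ and passing to the weak-$*$ limits $\omega_\infty, \omega_{\infty,\pm}$, all seven dissipation functionals evaluated at the limit vanish.

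The second step is to extract information from the vanishing of each dissipation. For $D_H^{(1)}$, the integrand is $K(y,\theta',\theta-\theta')\, m'\, [H(\omega') - H(\omega) - H'(\omega)(\omega'-\omega)]$, which for strictly convex $H$ is non-negative and vanishes only where $\omega' = \omega$ (wherever the kernel $K$ and $m'$ are positive). Assuming the tumbling kernel $K$ connects a.e. pair of directions — i.e. the jump operator is irreducible, which is implicit from $\Gamma>0$ in \eqref{svi} and the positivity hypotheses on $K$, $K_\pm$ — this forces $\omega_\infty(t,y,\cdot)$ to be independent of $\theta$; call it $\omega_\infty(t,y)$. Feeding this back into the (limiting, stationary-in-time) transport equation $V\sin\theta\,\partial_y(m\omega_\infty) = \cdots$ — or more directly, noting that $\omega_\infty$ must then satisfy the homogeneous version of \eqref{2.14} with no $\theta$-dependence, so $V\sin\theta\, m\,\partial_y\omega_\infty = 0$ — gives $\partial_y \omega_\infty = 0$, hence $\omega_\infty$ is a constant. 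Similarly $D_{H,\pm}^{(1)} = 0$ forces $\omega_{\infty,\pm}$ to be constant in $\theta$, and then $D_{H,\pm}^{(2)} = 0$ (together with $V\sin\theta$ having a sign on $(0,\pi)$ resp. $(-\pi,0)$ and the boundary fluxes $m_{\pm L}$ being positive there) forces the boundary trace $\omega_{\infty,\pm L}$ to equal the constant $\omega_{\infty,\pm}$; and $D_{H,\pm}^{(3)} = 0$ links $\omega_{\infty,\pm}$ to the interior trace $\omega_{\infty,\pm L}$. Chaining these equalities through the coupling at $y=\pm L$ identifies all the constants $\omega_\infty$, $\omega_{\infty,+}$, $\omega_{\infty,-}$ with a single common constant $c$.

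The third step pins down $c = 1$. Since the total mass is conserved (Theorem \ref{theorem-1.1}) and $n = m\omega$, $n_\pm = m_\pm \omega_\pm$, and the stationary state is normalized to the same mass $M(0)$, passing to the limit gives
\begin{equation*}
	M(0) = c\left[\int_{-L}^{L}\!\int_{-\pi}^{\!\pi}\! m\, \dd\theta\, \dd y + \int_0^{\!\pi}\!\! m_+\, \dd\theta + \int_{-\pi}^0\! m_-\, \dd\theta\right] = c\, M(0),
\end{equation*}
so $c = 1$ provided $M(0)>0$; hence $\omega_\infty = \omega_{\infty,\pm} = 1$ a.e. Finally, to upgrade from subsequential convergence to full convergence of $\int (\omega(t)-1)\phi$ as $t\to\infty$: the limit being the \emph{same} constant $1$ regardless of which subsequence was extracted gives convergence along the full sequence of integer shifts, and combining with the fact that $\mathcal{E}(t)$ (with $H(x)=(x-1)^2$) is monotone and converges, together with a density argument approximating $\phi \in L^1$ by bounded functions and Cauchy–Schwarz against the measures $m\,\dd\theta\,\dd y$ etc., yields $\int (\omega(t)-1)\phi \to 0$ for every $t\to\infty$ and every $L^1$ test function. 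The main obstacle I anticipate is the second step: carefully justifying that the vanishing of the five boundary/transfer dissipations, combined with the interior rigidity $\omega_\infty \equiv c_{\mathrm{int}}$, genuinely forces \emph{all} the constants to coincide — this requires the irreducibility/connectivity of the combined jump-and-transport dynamics linking the three phases (interior, top, bottom), and making that precise (and stating the minimal positivity hypothesis on $K, K_\pm$ under which it holds) is the delicate part; a secondary technical point is the weak-limit passage in the nonlinear dissipation functionals, which is handled by convexity/lower-semicontinuity (Fatou / weak lower semicontinuity of convex functionals) rather than by strong convergence.
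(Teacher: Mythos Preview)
Your proposal is essentially correct and follows the same GRE strategy as the paper: use the quadratic entropy to get integrable dissipation, deduce from vanishing dissipation that the weak limits $\omega_\infty,\omega_{\infty,\pm}$ are $\theta$-independent, feed this into the transport equation to get $y$- and $t$-independence, link the three constants through the boundary dissipations, and fix the constant to $1$ by mass conservation. The paper's proof (relegated to the appendix and modelled on \cite{fu2021fokker}) does exactly this, with two small differences in execution. First, instead of invoking weak lower semicontinuity of the convex dissipation functionals, the paper extracts from $\int_0^\infty(-D_H)\,\dd t<\infty$ an a.e.\ converging subsequence of the differences $\omega_{n_k}(\theta)-\omega_{n_k}(\theta')$, etc.; both routes are standard and equivalent here. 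Second, and more substantively, your final step is where the argument is thin: uniqueness of subsequential weak-$*$ limits in $L^\infty_{t,y,\theta}$ only gives convergence of time-averages $\int_0^1\!\int(\omega(t+k)-1)\phi\,\dd t\to 0$, and monotonicity of $\mathcal{E}(t)$ alone does not force $\mathcal{E}(t)\to 0$ (weak convergence does not imply convergence of convex functionals). The paper closes this gap by observing directly from the PDE that $t\mapsto\int n(t)\phi$ is Lipschitz for smooth compactly supported $\phi$; Lipschitz plus weak-$*$ convergence of the shifts then yields genuine pointwise-in-$t$ convergence, after which the density argument for $\phi\in L^1$ goes through. You should replace the Cauchy--Schwarz/monotonicity sketch with this Lipschitz argument.
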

	Since the proof of Theorem \ref{theorem-3.1} is almost the same as the proof in Section 3 of \cite{fu2021fokker}, we postpone it to the supplementary materials.

	\section{The Fokker-Planck limit}
	\label{Fokker-PlanckLimit}
	The CFPM has been studied in \cite{fu2021fokker} and we explain now its connection with the CRTM proposed here. Both models describe the movement of self-propelled particles confined between two parallel plates, but one changes the velocity by a tumble while the other by rotational diffusion. The model studied in \cite{fu2021fokker} writes
	\begin{subequations}\label{5.0}
		\sym$
		&\dfrac{\pt \tn}{\pt t}+V \sin\theta \dfrac{\pt \tn}{\pt y}-\dfrac{\pt^2 }{\pt \theta^2}[D(y,\theta) \tn]=0,&&\theta \in (-\pi,\pi),\quad y\in (-L,L), \label{5.0a}\\
		&\dfrac{\pt \tn_{+}}{\pt t}-\dfrac{\pt^2}{\pt \theta^2}[D_+(\theta)\tn_+]=V \sin\theta \tn(t,L,\theta),&&\theta \in (0,\pi), \label{5.0b}\\
		&\dfrac{\pt \tn_{-}}{\pt t}-\dfrac{\pt^2 }{\pt \theta^2}[D_-(\theta)\tn_{-}]=-V \sin\theta \tn(t,-L,\theta),&&\theta\in(-\pi,0), \label{5.0c}
		$
	\end{subequations}
	equipped with proper boundary conditions that take into account the switching between particles within the domain and in contact to the boundaries
	\begin{subequations}\label{5.0bc}
		\sym$
		&\tn(t,y,-\pi)=\tn(t,y,\pi),\qquad \dfrac{\pt \tn}{\pt \theta}(t,y,-\pi)=\dfrac{\pt \tn}{\pt \theta}(t,y,\pi), \quad y\in (-L,L),
		\label{5.0bc1}\\
		&\tn(t,L,\theta)=0,\quad \theta \in (-\pi,0),\qquad \tn(t,-L,\theta)=0,\quad \theta \in (0,\pi),\label{5.0bc2}\\[2mm]
		&\tn_+(t,0)=\tn_+(t,\pi)=0,\qquad \tn_-(t,0)=\tn_-(t,-\pi)=0,
		\label{5.0bc3}\\
		&\frac{\pt }{\pt\theta}\left[D(y,\theta)\tn(t,y,\theta)\right]\bigg|_{\theta=0_-}^{\theta=0_+}=-\frac{\pt }{\pt\theta}\left[D_+(0_+)\tn_+(t,0_+)\right]\delta_{y=L}+\frac{\pt }{\pt\theta}\left[D_-(0_-)\tn_-(t,0_-)\right]\delta_{y=-L},\label{5.0bc4}\\
		&\frac{\pt }{\pt\theta}\left[D(y,\theta)\tn(t,y,\theta)\right]\bigg|_{\theta=\pi_-}^{\theta=-\pi_+}=-\frac{\pt }{\pt\theta}\left[D_+(\pi_-)\tn_+(t,\pi_-)\right]\delta_{y=L}+\frac{\pt }{\pt\theta}\left[D_-(-\pi_+)\tn_-(t,-\pi_+)\right]\delta_{y=-L}.\label{5.0bc5}
		$
	\end{subequations}
	
	Inspired by the scaling of scattering equations proposed in \cite{pomraning1992fokker}, we consider the forward peaked scattering kernel and obtain that the $L_2$-weak limit of the scaled CRTM satisfies the CFPM, in the distributional sense.
	
	Here we consider the jump sizes are scaled by $\varepsilon$, and the jump frequency is rescaled by $\frac{1}{\varepsilon^2}$ with $\varepsilon$ being very small, which indicates that the tumbles are quite frequent. Now the model becomes
	\begin{small}
		\begin{subequations}\label{5.1}
			\sym$
			&\dfrac{\pt n^{\varepsilon}}{\pt t}+V \sin\theta \dfrac{\pt n^{\varepsilon}}{\pt y}+\dfrac{1}{\varepsilon^{2}}\left[k(y,\theta) n^{\varepsilon}-\!\!\int_{-\pi}^{\!\pi}\! \dfrac{1}{\varepsilon} K_{\varepsilon}\left(y,\theta', \dfrac{\theta-\theta'}{\varepsilon}\right) n^{\varepsilon\prime} d\theta'\right]=0,&&\theta\in(-\pi.\pi),\quad y\in(-L,L), \label{5.1a}\\
			&\dfrac{\pt n_{+}^{\varepsilon}}{\pt t}+\dfrac{1}{\varepsilon^{2}}\left[k_+(\theta) n_{+}^{\varepsilon}-\!\!\int_{0}^{\!\pi}\!\! \dfrac{1}{\varepsilon} K_{\varepsilon+}\left(\theta', \dfrac{\theta-\theta'}{\varepsilon}\right) n_{+}^{\varepsilon\prime} d\theta'\right]=V \sin\theta n^{\varepsilon}(t,L,\theta),&&\theta\in(0,\pi), \label{5.1b}\\
			&\dfrac{\pt n_{-}^{\varepsilon}}{\pt t}+\dfrac{1}{\varepsilon^{2}}\left[k_-(\theta) n_{-}^{\varepsilon}-\!\!\int_{-\pi}^{0}\! \dfrac{1}{\varepsilon} K_{\varepsilon-}\left(\theta',\dfrac{\theta-\theta'}{\varepsilon}\right) n_{-}^{\varepsilon\prime} d\theta'\right]=-V \sin\theta n^{\varepsilon}(t,-L,\theta),&&\theta\in(-\pi,0), \label{5.1c}
			$
		\end{subequations}
	\end{small}

	\begin{equation}
		K_{\varepsilon}\left(y,\theta, \dfrac{\theta'-\theta}{\varepsilon}\right) \geq 0, \quad k(y,\theta):=\int_{-\pi}^{\!\pi}\!\dfrac{1}{\varepsilon} K_{\varepsilon}\left(y,\theta, \dfrac{\theta'-\theta}{\varepsilon}\right) d\theta',\quad K_{\varepsilon\pm}\left(\theta, \dfrac{\theta'-\theta}{\varepsilon}\right) \geq 0,  \label{5.2}
	\end{equation}
	\begin{equation}
		k_+(\theta):=\int_{-\pi}^{\!\pi}\!\dfrac{1}{\varepsilon} K_{\varepsilon+}\left(\theta, \dfrac{\theta'-\theta}{\varepsilon}\right) d\theta', \quad k_-(\theta):=\int_{-\pi}^{\!\pi}\!\dfrac{1}{\varepsilon} K_{\varepsilon-}\left(\theta, \dfrac{\theta'-\theta}{\varepsilon}\right) d\theta',  \label{5.2a}
	\end{equation}
	with the boundary conditions
	\begin{subequations}\label{5.3}
		\sym$
		&n^{\varepsilon}(t,y,-\pi)=n^{\varepsilon}(t,y,\pi), && y \in (-L,L),\label{5.3a}\\[2mm]
		&-V\sin\theta n^{\varepsilon}(t,L,\theta)=\dfrac{1}{\varepsilon^{2}} \int_{0}^{\!\pi}\!\!\dfrac{1}{\varepsilon} K_{\varepsilon+}\left(\theta',\dfrac{\theta-\theta'}{\varepsilon}\right)n_{+}^{\varepsilon}(t,\theta') d\theta', &&\theta\in(-\pi,0),\label{5.3b}\\
		&V\sin\theta n^{\varepsilon}(t,-L,\theta)=\dfrac{1}{\varepsilon^{2}}  \int_{-\pi}^{0}\!\dfrac{1}{\varepsilon} K_{\varepsilon-}\left(\theta', \dfrac{\theta-\theta'}{\varepsilon}\right)n_{-}^{\varepsilon}(t,\theta')d\theta',&&\theta\in(0,\pi), \label{5.3c}
		$
	\end{subequations}
	
	For simplicity, we only prove the case when the kernel functions satisfy the following assumptions.
	\begin{assumption}\label{assK}
		Assume that
		\begin{itemize}
			\item[(A1)] $K_{\varepsilon}\left(y,\theta,z\right)$ and $K_{\varepsilon\pm}\left(\theta,z\right)$ are nonzero only if $z\in \left[-1+\frac{2k\pi}{\varepsilon},1+\frac{2k\pi}{\varepsilon}\right]$ for some $k\in \Z$;
			\item[(A2)] $K_{\varepsilon}\left(y,\theta,z\right)$ and $K_{\varepsilon\pm}\left(\theta,z\right)$ are periodic in $z$, i.e. for any $y\in [-L,L]$, $\theta\in [-\pi,\pi]$, $k\in\Z$, we have
			\eq$K_{\varepsilon}\left(y,\theta,z\right)=K_{\varepsilon}\left(y,\theta,z+\frac{2k\pi}{\varepsilon}\right),\quad K_{\varepsilon\pm}\left(\theta,z\right)=K_{\varepsilon\pm}\left(\theta,z+\frac{2k\pi}{\varepsilon}\right).\label{pcz}$
			\item[(A3)] $K_{\varepsilon}\left(y,\theta,z\right)$ and $K_{\varepsilon\pm}\left(\theta,z\right)$ are symmetric in $z$ when $z\in[-1,1]$, i.e.
			\eq$K_{\varepsilon}\left(y,\theta,z\right)=K_{\varepsilon}\left(y,\theta,-z\right), \quad K_{\varepsilon\pm}\left(\theta,z\right)=K_{\varepsilon\pm}\left(\theta,-z\right).\label{sym1}$
			\item[(A4)] $K_{\varepsilon}\left(y,\theta,z\right)$ is periodic in $\theta$, i.e.
			\eq$K_{\varepsilon}\left(y,-\pi,z\right)=K_{\varepsilon}\left(y,\pi,z\right), \ \frac{\pt K_{\varepsilon}}{\pt \theta}\left(y,-\pi,z\right)=\frac{\pt K_{\varepsilon}}{\pt \theta}\left(y,\pi,z\right).\label{per1}$
		\end{itemize}
	\end{assumption}
	A simple example of $K_{\varepsilon}\left(y,\theta,z\right)$ and $K_{\varepsilon\pm}\left(\theta,z\right)$ that satisfy Assumption \ref{assK} is that 
	\begin{equation}\label{eq::3.10}
		K_{\varepsilon}\left(y,\theta,z\right)=k(y,\theta)f_{\varepsilon}(z),\qquad K_{\varepsilon\pm}\left(\theta,z\right)=k_{\pm}(\theta)f_{\varepsilon}(z),
	\end{equation}
	where
	\begin{equation}\label{eq::3.11}
		f_{\varepsilon}(z)=\left\{
		\begin{aligned}
			&1-\abs{z}, && z\in\left[\frac{2k\pi}{\varepsilon}-1,\frac{2k\pi}{\varepsilon}+1\right), \\
			&0, &&z\in \left[ \frac{2k\pi-\pi}{\varepsilon},\frac{2k\pi}{\varepsilon}-1\right)\cup\left[\frac{2k\pi}{\varepsilon}+1,\frac{2k\pi+\pi}{\varepsilon}\right), 
		\end{aligned}\right.
	\end{equation}
	as shown in \eqref{fvarepsilonz}, and
	$k(y,\theta)$ and $k_{\pm}(\theta)$ are nonzero, $k(y,\theta)$ is periodic in $\theta$ with period $2\pi$.
	\begin{figure}[!htb]
		\centering
		\begin{tikzpicture}[scale=1.5]
			\draw[->] (-4.2,0) -- (4.2,0) node[right] {$z$};
			\node at (-4,-0.2) {$-\frac{2\pi}{\varepsilon}$};
			\node at (-3,-0.2) {$-\frac{2\pi}{\varepsilon}+1$};
			\node at (-1,-0.2) {$-1$};
			\node at (1,-0.2) {$1$};
			\node at (3,-0.2) {$\frac{2\pi}{\varepsilon}-1$};
			\node at (4,-0.2) {$\frac{2\pi}{\varepsilon}$};
			\node at (0.2,1) {$1$};
			\draw[-] (-4,1) -- (-3,0);
			\draw[-] (-1,0) -- (0,1);
			\draw[-] (0,1) -- (1,0); 
			\draw[-] (3,0) -- (4,1);
			\draw[dashed] (-4,0) -- (-4,1);
			\draw[dashed] (4,0) -- (4,1);
			\draw[->] (0,-0.4) -- (0,1.4) node[right] {$f_\varepsilon(z)$};
		\end{tikzpicture}
		\caption{Diagram of $f_{\varepsilon}(z)$ for $z\in \left[-\frac{2\pi}{\varepsilon},\frac{2\pi}{\varepsilon}\right)$}
		\label{fvarepsilonz}
	\end{figure}
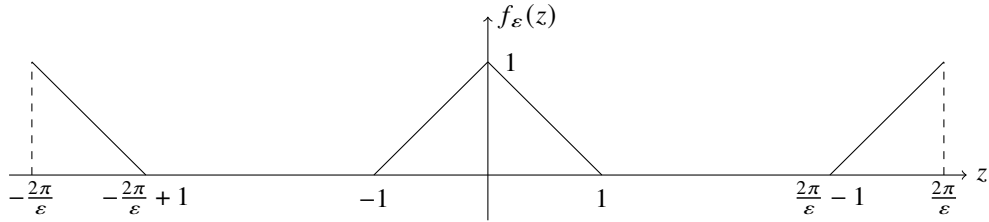
	
	We define $\M^1([-L,L]\times[-\pi,\pi])$ the space of bounded measure on $[-L,L]\times[-\pi,\pi]$ \blue{(see \cite{le2022measure})}. Then we have
	\begin{theorem}
		\label{theorem-4.1} Suppose that the initial data $\left(n^{\varepsilon}(0,y,\theta),  n_{\pm}^{\varepsilon}(0,\theta)\right)$ belong to $\M^1([-L,L]\times[-\pi,\pi])$ and are independent of $\varepsilon$ and assume Assumption \ref{assK}. Then, there exist subsequences of solutions to the CRTM \eqref{5.1}--\eqref{5.3}, still denoted by $n^{\varepsilon}$, $n_{\pm}^{\varepsilon}$, such that for any $T>0$,
		\begin{align*}
			&n^{\varepsilon}(t,y,\theta)\xrightarrow[\varepsilon\to 0]{} \tn(t,y,\theta), \mbox{ in } L^\infty \big([0,T]; \M^1([-L,L]\times[-\pi,\pi])\big)-weak-*, \\
			&n_{+}^{\varepsilon}(t,\theta)\xrightarrow[\varepsilon\to 0]{} \tn_{+}(t,\theta), \mbox{ in } L^\infty \big([0,T]; \M^1([-\pi,0])\big)-weak-*, \\
			&n_{-}^{\varepsilon}(t,\theta)\xrightarrow[\varepsilon\to 0]{} \tn_{-}(t,\theta), \mbox{ in } L^\infty \big([0,T]; \M^1([0,\pi])\big)-weak-*, 
		\end{align*}
		with $\tn$, $\tn_{\pm}$ the solutions to the CFPM \eqref{5.0}--\eqref{5.0bc}, and the diffusion coefficients are determined by 
		\eq$D(y,\theta)=\lim\limits_{\varepsilon\to 0}\frac{1}{2}\int_{-1}^{1} z^{2} K_{\varepsilon}(y,\theta,z)dz,\quad  D_{\pm}(\theta)=\lim\limits_{\varepsilon\to 0}\frac{1}{2}\int_{-1}^{1} z^{2} K_{\varepsilon \pm}(\theta,z)dz.\label{defD}$
	\end{theorem}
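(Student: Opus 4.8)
The plan is a diffusion-approximation argument in the spirit of \cite{pomraning1992fokker}: produce weak-$*$ limits by compactness, expand the rescaled scattering operators to second order in the bulk, and then resolve the boundary layers at the grazing directions $\theta=0,\pm\pi$ which are responsible for the Dirichlet and transmission conditions of the CFPM. For the \emph{uniform bounds}, observe that the scaled system \eqref{5.1}--\eqref{5.3} has exactly the structural form of \eqref{2.1}--\eqref{2.6}, so Theorem~\ref{theorem-1.1} applies verbatim and the total mass is conserved; combined with non-negativity of the weak solutions this yields $\|n^{\varepsilon}(t)\|_{\M^1}\le M(0)$ and $\|n_{\pm}^{\varepsilon}(t)\|_{\M^1}\le M(0)$ for every $t$ and every $\varepsilon$. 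Hence $(n^{\varepsilon},n_{\pm}^{\varepsilon})$ is bounded in $L^{\infty}([0,T];\M^1)$, which is the dual of the separable space $L^{1}([0,T];C_0)$, and Banach--Alaoglu furnishes a subsequence with the stated weak-$*$ limits $(\tn,\tn_{\pm})$; since the initial data are fixed, $n^{\varepsilon}(0)\to n^{\varepsilon}(0)$ trivially.

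For the \emph{interior limit}, substitute $z=(\theta-\theta')/\varepsilon$ in each gain integral; by (A1) (support of width $2\varepsilon$ around $z\in\frac{2\pi}{\varepsilon}\Z$), (A2) (periodicity in $z$) and (A4) together with the $\theta$-periodicity \eqref{5.3a} of $n^{\varepsilon}$, the $z$-integral folds onto the single cell $z\in[-1,1]$. Testing \eqref{5.1a} against a smooth $\varphi(t,y,\theta)$, compactly supported in $[0,T)$ and $2\pi$-periodic in $\theta$, and moving the shift onto $\varphi$ through $\eta=\theta-\varepsilon z$, the scattering contribution becomes $\frac{1}{\varepsilon^{2}}\int_{-\pi}^{\pi}\!\int_{-1}^{1}K_{\varepsilon}(y,\eta,z)\,n^{\varepsilon}(t,y,\eta)\,[\varphi(\eta)-\varphi(\eta+\varepsilon z)]\,dz\,d\eta$ for each $y$. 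Taylor-expanding $\varphi(\eta+\varepsilon z)-\varphi(\eta)$ to second order, the $O(\varepsilon^{-1})$ term carries the first moment $\int_{-1}^{1}z\,K_{\varepsilon}(y,\eta,z)\,dz=0$ by the symmetry (A3); the $O(1)$ term carries $\frac12\int_{-1}^{1}z^{2}K_{\varepsilon}(y,\eta,z)\,dz$, which converges to $D(y,\eta)$ by \eqref{defD}; the remainder is $O(\varepsilon)$ since $\abs{z}\le1$ on the support and the second moments are bounded uniformly in $\varepsilon$. Passing to the limit with the weak-$*$ convergence of $n^{\varepsilon}$ identifies the interior operator as $-\frac{\pt^{2}}{\pt\theta^{2}}[D(y,\theta)\tn]$, so $\tn$ solves \eqref{5.0a} in the bulk; the time and transport terms pass trivially, while the integration by parts in $\theta$ (split at $\theta=0$) leaves jump contributions across $\theta=0$ and $\theta=\pm\pi$, and the integration by parts in $y$ leaves the traces $V\sin\theta\,n^{\varepsilon}(t,\pm L,\theta)$ — both are reconciled with the reservoir dynamics next.

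The \emph{boundary layers} are the crux. For any $\varphi$ supported in a compact subset of $\{\theta\in(-\pi,0)\}$, assumption (A1) forces the right-hand side of \eqref{5.3b} to vanish once $\varepsilon$ is small enough (a jump of size $\le\varepsilon$ cannot carry $\theta'\in(0,\pi)$ into the support of $\varphi$), whence $\tn(t,L,\theta)=0$ for $\theta\in(-\pi,0)$ and, symmetrically, $\tn(t,-L,\theta)=0$ for $\theta\in(0,\pi)$: this is \eqref{5.0bc2}. The remaining mass of $V\sin\theta\,n^{\varepsilon}(t,\pm L,\theta)$ concentrates in $O(\varepsilon)$-neighbourhoods of $\theta=0$ and $\theta=\pm\pi$; rescaling $\theta=\varepsilon\sigma$ there and combining \eqref{5.3b}--\eqref{5.3c} with the reservoir equations \eqref{5.1b}--\eqref{5.1c}, the escape rate from the boundary densities is $O(\varepsilon^{-2})$ on these $O(\varepsilon)$-thick layers, so balancing against the $O(1)$ incoming diffusive flux forces $\tn_{+}(t,0)=\tn_{+}(t,\pi)=0$ and $\tn_{-}(t,0)=\tn_{-}(t,-\pi)=0$ (that is, \eqref{5.0bc3}) and makes the total concentrated flux converge to $\pm\frac{\pt}{\pt\theta}[D_{\pm}(\cdot)\tn_{\pm}(\cdot)]$ evaluated at the endpoints $\theta=0,\pm\pi$. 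Inserting these Dirac-concentrated fluxes (supported at $y=L$ and $y=-L$) into the $\theta=0$ and $\theta=\pm\pi$ jump terms left from the interior analysis reproduces exactly the transmission conditions \eqref{5.0bc4}--\eqref{5.0bc5}, while the reservoir equations \eqref{5.1b}--\eqref{5.1c} pass to the limit as in the second paragraph (now on bounded $\theta$-intervals with no periodicity, the Dirichlet endpoints supplied by \eqref{5.0bc3}) to give \eqref{5.0b}--\eqref{5.0c}.

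Collecting the above, $(\tn,\tn_{\pm})$ satisfies the weak, distributional formulation of \eqref{5.0}--\eqref{5.0bc} with diffusion coefficients \eqref{defD}, which proves the theorem. I expect the principal difficulty to be exactly this last step: making the boundary-layer rescaling rigorous, controlling (or, better, bypassing via an integrated flux identity) the layer profile, and pinning down the limiting Dirac weights. The cleanest route is probably to test the full coupled system against a single global triple $(\varphi,\varphi_{+},\varphi_{-})$ subordinated to the weak formulation of the CFPM, so that all interior, transmission, and boundary contributions are tracked simultaneously and the traces $V\sin\theta\,n^{\varepsilon}(t,\pm L,\theta)$ never have to be handled in isolation.
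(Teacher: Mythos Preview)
Your proposal is correct in outline and follows essentially the same route as the paper: compactness from mass conservation, then a second-order Taylor expansion of the rescaled scattering operator (with the first moment killed by the symmetry (A3)) to obtain \eqref{5.0a}, then a boundary-layer analysis at the grazing angles to recover the Dirichlet and transmission conditions. Your observation that \eqref{5.0bc2} follows directly from (A1) applied to \eqref{5.3b}--\eqref{5.3c} is exactly how the paper argues in its Step~III.

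Where the paper differs from your sketch is only in the mechanics of the boundary layer, which you correctly flag as the crux. Rather than a rescaling $\theta=\varepsilon\sigma$ and an informal balance argument, the paper proceeds by brute-force integral computation: testing \eqref{5.1b} against $\Phi_+(t,\theta)$ and splitting the resulting scattering integral according to whether $\theta\in[0,\varepsilon]$, $[\varepsilon,\pi-\varepsilon]$, or $[\pi-\varepsilon,\pi]$ (five pieces $H_{11},H_{12},H_{13},H_{21},H_{22}$, using the support information \eqref{suppp}--\eqref{suppm}). Choosing first $\Phi_+$ with $\Phi_+(t,0)=\Phi_+(t,\pi)=0$ gives \eqref{5.0b}; allowing nonzero Neumann data at the endpoints then yields \eqref{5.0bc3}; finally allowing nonzero Dirichlet data produces an explicit intermediate relation
\[
\lim_{\varepsilon\to 0}\frac{1}{\varepsilon}\,n_+^{\varepsilon}(t,0)\;=\;\frac{D_+(0)\,\partial_\theta\tn_+(t,0)}{2\lim_{\varepsilon\to 0}\int_{-1}^{0}(-z)K_{\varepsilon+}(0,z)\,dz},
\]
and its analogue at $\theta=\pi$, which is then substituted into the trace terms coming from the interior equation (tested now against $\Phi$ \emph{not} compactly supported in $y$) to obtain \eqref{5.0bc4}--\eqref{5.0bc5}. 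This intermediate ``next-order'' identity is the concrete replacement for your heuristic flux balance; it is the one step you would need to make precise, and the paper's direct computation is probably simpler than carrying out a genuine matched-asymptotics argument. Also note that in the interior step the paper explicitly isolates the contributions from $\theta\in[-\pi,-\pi+\varepsilon]\cup[\pi-\varepsilon,\pi]$ (its $G_2^\varepsilon,G_3^\varepsilon$) and shows they vanish thanks to the periodicity of $\Phi$ and $\partial_\theta\Phi$ --- your ``folds onto $z\in[-1,1]$'' hides this small edge computation.
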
 
	
	\begin{proof}
		Since $n^{\varepsilon}(t,y,\theta)$ and $n_{+}^{\varepsilon}(t,\theta)$ are non-negative and satisfy mass conservation, the family $n^{\varepsilon}(t,y,\theta)$ is uniformly bounded in $L^{\infty}([0,T];L^1([-L,L]\times[-\pi,\pi]))$ and hence in $L^{\infty}([0,T];\M^1([-L,L]\times[-\pi,\pi]))$, the family  $n_{+}^{\varepsilon}(t,\theta)$ is uniformly bounded in $L^{\infty}([0,T];L^1([0,\pi]))$ hence in $L^{\infty}([0,T];\M^1([0,\pi]))$, the family $n_{-}^{\varepsilon}(t,\theta)$ is uniformly bounded in $L^{\infty}([0,T];L^1([-\pi,0]))$ hence in $L^{\infty}([0,T];\M^1([-\pi,0]))$. Therefore, there exist subsequences of $n^{\varepsilon}$, $n_{\pm}^{\varepsilon}$, still denoted by $n^{\varepsilon}$, $n_{\pm}^{\varepsilon}$ 
		which converge as stated in Theorem \ref{theorem-4.1}.
		\\
				Next we prove that $\tn$, $\tn_{\pm}$ satisfy the CFPM \eqref{5.0}--\eqref{5.0bc} in the distribution sense. Since the CFPM \eqref{5.0}--\eqref{5.0bc} is complex, we divide our proof into three steps: 
				\begin{enumerate}
					\item[I.] Prove that \eqref{5.1a} with boundary condition \eqref{5.3a} converges to \eqref{5.0a} with boundary condition \eqref{5.0bc1} weakly.
					\item[II.] Prove that \eqref{5.1b}--\eqref{5.1c} converge to \eqref{5.0b}--\eqref{5.0c} with boundary condition \eqref{5.0bc3} weakly.
					\item[III.] Prove that \eqref{5.3b}-
					\eqref{5.3c} converge to \eqref{5.0bc2}, \eqref{5.0bc4}, \eqref{5.0bc5} weakly.
				\end{enumerate}

				\textbf{Step I. } For any fixed $T>0$, consider a test function $\Phi(t,y,\theta)\in C^{3}([0,T]\times [-L,L]\times [-\pi,\pi])$, which has compact support in $t$ and $y$, and satisfies periodic boundary condition in $\theta$
				\eq$\Phi(t,y,-\pi)=\Phi(t,y,\pi), \quad \frac{\pt}{\pt \theta}\Phi(t,y,-\pi)=\frac{\pt}{\pt \theta}\Phi(t,y,\pi),\quad y \in [-L,L].\label{pbphi}$ 
				Multiplying \eqref{5.1a} by such $\Phi(t,y,\theta)$ and integrating in $t$ from $0$ to $T$, in $y$ from $-L$ to $L$, in $\theta$ from $-\pi$ to $\pi$ yields
				\begin{align*}
					&\int_{0}^{T}\!\!\!\int_{-L}^{L}\!\int_{-\pi}^{\!\pi}\! -n^{\varepsilon}(t,y,\theta)\left[\frac{\pt \Phi}{\pt t}(t,y,\theta)+V \sin\theta \frac{\pt \Phi}{\pt y}(t,y,\theta) \right] d\theta dy dt \\
					\xlongequal{\eqref{5.2}}&-\frac{1}{\varepsilon^{2}}\int_{0}^{T}\!\!\!\int_{-L}^{L}\!\int_{-\pi}^{\!\pi}\! \Phi(t,y,\theta)n^{\varepsilon}(t, y,\theta)\int_{-\pi}^{\!\pi}\!\frac{1}{\varepsilon}K_{\varepsilon}\left(y,\theta,\frac{\theta'-\theta}{\varepsilon}\right) d\theta' d\theta dydt\\
					&+\frac{1}{\varepsilon^{2}}\int_{0}^{T}\!\!\!\int_{-L}^{L}\!\int_{-\pi}^{\!\pi}\!n^{\varepsilon}(t,y,\theta')\int_{-\pi}^{\!\pi}\!\frac{1}{\varepsilon}K_{\varepsilon}\left(y,\theta',\frac{\theta-\theta'}{\varepsilon}\right)\Phi(t,y,\theta)d\theta d\theta'dydt \\
					\xlongequal{z=\frac{\theta'-\theta}{\varepsilon},z'=\frac{\theta-\theta'}{\varepsilon}}&-\frac{1}{\varepsilon^{2}}\int_{0}^{T}\!\!\!\int_{-L}^{L}\!\int_{-\pi}^{\!\pi}\! \Phi(t,y,\theta)n^{\varepsilon}(t, y,\theta)\int_{\frac{-\theta-\pi}{\varepsilon}}^{\frac{-\theta+\pi}{\varepsilon}}\!K_{\varepsilon}\left(y,\theta,z\right) dz d\theta dydt\\
					&+\frac{1}{\varepsilon^{2}}\int_{0}^{T}\!\!\!\int_{-L}^{L}\!\int_{-\pi}^{\!\pi}\!n^{\varepsilon}(t,y,\theta')\int_{\frac{-\theta'-\pi}{\varepsilon}}^{\frac{-\theta'+\pi}{\varepsilon}}\!K_{\varepsilon}\left(y,\theta',z'\right)\Phi(t,y,\theta'+\varepsilon z')d z' d\theta' dydt\\
					=&\frac{1}{\varepsilon^{2}}\int_{0}^{T}\!\!\!\int_{-L}^{L}\!\int_{-\pi}^{\!\pi}\! n^{\varepsilon}(t, y,\theta)\int_{\frac{-\theta-\pi}{\varepsilon}}^{\frac{-\theta+\pi}{\varepsilon}}\!K_{\varepsilon}\left(y,\theta,z\right)\left[\Phi(t,y,\theta+\varepsilon z)-\Phi(t,y,\theta)\right] dz d\theta dydt.
				\end{align*}
				Next, we divide the right hand side of the above equation into three parts $G_1^{\varepsilon}$, $G_2^{\varepsilon}$, $G_3^{\varepsilon}$, such that $$G_1^{\varepsilon}=\int_{-\pi+\varepsilon}^{\pi-\varepsilon}G(t,y,\theta)d\theta,\quad G_2^{\varepsilon}=\int_{-\pi}^{-\pi+\varepsilon}G(t,y,\theta)d\theta, \quad G_3^{\varepsilon}=\int_{\pi-\varepsilon}^{\!\pi}G(t,y,\theta)d\theta,$$
				with 
				$$G^{\varepsilon}(t,y,\theta)=\frac{1}{\varepsilon^2}\int_{0}^{T}\!\!\!\int_{-L}^{L}\!n^{\varepsilon}(t, y,\theta) \int_{\frac{-\theta-\pi}{\varepsilon}}^{\frac{-\theta+\pi}{\varepsilon}}\! K_{\varepsilon}(y,\theta, z)  \left[\Phi(t,y,\theta+\varepsilon z)-\Phi(t,y,\theta)\right] dz dydt.$$
				When $\varepsilon\to 0$, the limits of $G_1^{\varepsilon}$, $G_2^{\varepsilon}$, $G_3^{\varepsilon}$ are calculated in the subsequent part.
				\begin{itemize}
					\item \textbf{The limit of $G_1^{\varepsilon}$}
					
					When $\theta\in[-\pi+\varepsilon,\pi-\varepsilon]$, we have
					\eq$supp_z\left\{K_{\varepsilon}(y,\theta,z)\right\}\cap \left[\dfrac{-\theta-\pi}{\varepsilon},\dfrac{-\theta+\pi}{\varepsilon}\right] \subset [-1,1].$
					From \eqref{sym1}, $zK_{\varepsilon}(y,\theta,z)$ is odd when $z\in[-1,1]$, we can write 
					\begin{align*}
						G_1^{\varepsilon}=&\int_{0}^{T}\!\!\!\int_{-L}^{L}\!\int_{-\pi+\varepsilon}^{\pi-\varepsilon} n^{\varepsilon}(t,y,\theta) \int_{\frac{-\theta-\pi}{\varepsilon}}^{\frac{-\theta+\pi}{\varepsilon}}\! K_{\varepsilon}(y,\theta, z)\left[\frac{z}{\varepsilon}\frac{ \pt \Phi}{\pt\theta}(t,y,\theta)+\frac{z^{2}}{2} \frac{\pt^{2} \Phi}{\pt\theta^{2}}(t,y,\theta)\right] dz d\theta dydt+O(\varepsilon) \\
						=&\int_{0}^{T}\!\!\!\int_{-L}^{L}\!\int_{-\pi+\varepsilon}^{\pi-\varepsilon} n^{\varepsilon}(t,y,\theta) \frac{\pt \Phi}{\pt\theta}(t,y,\theta)\int_{-1}^{1} \frac{z}{\varepsilon}K_{\varepsilon}(y,\theta,z)dz d\theta dydt\\
						&+\int_{0}^{T}\!\!\!\int_{-L}^{L}\!\int_{-\pi+\varepsilon}^{\pi-\varepsilon}\frac{\pt^2 \Phi}{\pt\theta^{2}}(t,y,\theta)\int_{-1}^{1} \frac{z^{2}}{2}  K_{\varepsilon}(y,\theta,z)dz d\theta dydt+O(\varepsilon) \\
						=&\int_{0}^{T}\!\!\!\int_{-L}^{L}\!\int_{-\pi+\varepsilon}^{\pi-\varepsilon} D(y,\theta) n^{\varepsilon}(t,y,\theta) \frac{\pt^2 \Phi}{\pt\theta^{2}}(t,y,\theta) d\theta dydt +O(\varepsilon),
					\end{align*}
					where $D(y,\theta)$ is defined as in \eqref{defD}. We pass to the limit and get \eq$\lim\limits_{\varepsilon\to 0}G_1^{\varepsilon}=\int_{0}^{T}\!\!\!\int_{-L}^{L}\!\int_{-\pi}^{\!\pi}\! D(y,\theta) \tn(t,y,\theta) \frac{\pt^2 \Phi}{\pt\theta^{2}}(t,y,\theta) d\theta dydt.\label{limG1}$
					
					\item \textbf{Limits of $G_2^{\varepsilon}$, $G_3^{\varepsilon}$}
					
					Next we show that $\lim\limits_{\varepsilon\to 0}G_2^{\varepsilon}=0$. When $\theta\in [-\pi,-\pi+\varepsilon]$, we have 
					\eq$supp_z\left\{K_{\varepsilon}(y,\theta,z)\right\}\cap \left[\dfrac{-\theta-\pi}{\varepsilon},\dfrac{-\theta+\pi}{\varepsilon}\right] \subset \left[\frac{-\theta-\pi}{\varepsilon},1\right]\cap\left[\frac{2\pi}{\varepsilon}-1,\frac{-\theta+\pi}{\varepsilon}\right].$
					Using assumption of periodicity and symmetry in \eqref{pcz}, \eqref{sym1}, we find
					\begin{align*}
						\int_{\frac{2\pi}{\varepsilon}-1}^{\frac{-\theta+\pi}{\varepsilon}}\!\!K_{\varepsilon}(y,\theta, z)  \left[\Phi(t,y,\theta+\varepsilon z)-\Phi(t,y,\theta)\right] dz=&\int_{\frac{2\pi}{\varepsilon}-1}^{\frac{-\theta+\pi}{\varepsilon}}\!\!K_{\varepsilon}(y,\theta,\frac{2\pi}{\varepsilon}-z)  \left[\Phi(t,y,\theta+\varepsilon z)-\Phi(t,y,\theta)\right] dz\\
						\xlongequal{z'=\frac{2\pi}{\varepsilon}-z}& \int_{\frac{\theta+\pi}{\varepsilon}}^{1}\! K_{\varepsilon}(y,\theta,z')  \left[\Phi(t,y,\theta+2\pi-\varepsilon z')-\Phi(t,y,\theta)\right] dz',
					\end{align*}
					which gives us
					\begin{small}
						\begin{align*}
							G_2^{\varepsilon}=&\frac{1}{\varepsilon^2}\int_{0}^{T}\!\!\!\int_{-L}^{L}\!\int_{-\pi}^{-\pi+\varepsilon}n^{\varepsilon}(t,y,\theta)\int_{\frac{-\theta-\pi}{\varepsilon}}^{\frac{\theta+\pi}{\varepsilon}}K_{\varepsilon}(y,\theta, z)  \left[\Phi(t,y,\theta+\varepsilon z)-\Phi(t,y,\theta)\right] dz d\theta dydt \\
							&+\frac{1}{\varepsilon^2}\int_{0}^{T}\!\!\!\int_{-L}^{L}\!\int_{-\pi}^{-\pi+\varepsilon}n^{\varepsilon}(t,y,\theta)\int_{\frac{\theta+\pi}{\varepsilon}}^{1}K_{\varepsilon}(y,\theta, z)  \left[ \Phi(t,y,\theta+\varepsilon z)+\Phi(t,y,\theta+2\pi-\varepsilon z)-2\Phi(t,y,\theta)\right] dz d\theta dydt.
						\end{align*}
					\end{small}
					When $\theta\in [-\pi,-\pi+\varepsilon]$, $\frac{\theta+\pi}{\varepsilon}\in[0,1]$. Hence by symmetry in \eqref{sym1}, we have
					$$\int_{\frac{-\theta-\pi}{\varepsilon}}^{\frac{\theta+\pi}{\varepsilon}}K_{\varepsilon}(y,\theta, z)  \left[\Phi(t,y,\theta+\varepsilon z)- \Phi(t,y,\theta)\right] dz d\theta=\varepsilon\frac{\pt\Phi(t,y,\theta)}{\pt \theta}\int_{\frac{-\theta-\pi}{\varepsilon}}^{\frac{\theta+\pi}{\varepsilon}}z K_{\varepsilon}(y,\theta, z) dz + O(\varepsilon^2)=O(\varepsilon^2).$$
					On the other hand, if we expand $\Phi(t,y,\theta+\varepsilon z)$ near $\theta+\varepsilon z=-\pi$, and $\Phi(t,y,\theta+2\pi-\varepsilon z)$ near $\theta+2\pi-\varepsilon z=\pi$, $\Phi(t,y,\theta)$ near $\theta=-\pi$, we have
					\begin{align*}
						&\Phi(t,y,\theta+\varepsilon z)+\Phi(t,y,\theta+2\pi-\varepsilon z)-2\Phi(t,y,\theta)
						\\
						=&\Phi(t,y,\pi)-\Phi(t,y,-\pi)+\left(\theta+\pi-\varepsilon z\right)\left[\frac{\pt \Phi}{\pt \theta}(t,y,\pi)-\frac{\pt \Phi}{\pt \theta}(t,y,-\pi)\right]+O(\varepsilon^2),
					\end{align*}
					which is just $O(\varepsilon^2)$ for $\Phi(t,y,\theta)$ satisfying the periodic boundary conditions in \eqref{pbphi}. Hence we have $\lim\limits_{\varepsilon\to 0}G_2^{\varepsilon}=0$, and by similar derivation, $\lim\limits_{\varepsilon\to 0}G_3^{\varepsilon}=0$ also holds.
				\end{itemize}
				We conclude that
				\begin{align*}
					\int_{0}^{T}\!\!\!\int_{-L}^{L}\!\int_{-\pi}^{\!\pi}&\! -\tn\left[\frac{\pt \Phi}{\pt t}(t,y,\theta)+V \sin\theta \frac{\pt \Phi}{\pt y} \right] d\theta dy dt\\
					=&\lim\limits_{\varepsilon \to 0}(G_1^{\varepsilon}+G_2^{\varepsilon}+G_3^{\varepsilon})=\int_{0}^{T}\!\!\!\int_{-L}^{L}\!\int_{-\pi}^{\!\pi}\! D(y,\theta) \tn(t,y,\theta) \frac{\pt^2 \Phi}{\pt\theta^{2}}(t,y,\theta) d\theta dydt,
				\end{align*}
				which is the weak form of \eqref{5.0a} with periodic boundary condition \eqref{5.0bc1}.
				
				\textbf{Step II.}
				For any fixed $T>0$, consider a test function $\Phi_+(t,\theta)\in C^3([0,T]
				\times[0,\pi])$ which satisfies
				\eq$\Phi_+(t,0)=\Phi_+(t,\pi)=0.\label{dbc4phip}$
				We multiply \eqref{5.1b} with such a test function $\Phi_+(t,\theta)$ and integrate in $t$ from $0$ to $T$, in $\theta$ from $0$ to $\pi$ to get 
				\begin{align*}
					&\int_{0}^{T}\!\!\!\int_{0}^{\!\pi}\!\!-n_+^{\varepsilon}\frac{\pt\Phi_+}{\pt t}(t,\theta)-V\sin\theta n^{\varepsilon}(t,L,\theta)\Phi_+(t,\theta)d \theta dt\\
					\xlongequal{\eqref{5.2a}}&-\frac{1}{\varepsilon^2}\int_{0}^{T}\!\!\!\int_{0}^{\!\pi}\!\!\Phi_+(t,\theta)n_+^{\varepsilon}(t,\theta)\int_{-\pi}^{\!\pi}\!\frac{1}{\varepsilon}K_{\varepsilon+}\left(\theta,\frac{\theta'-\theta}{\varepsilon}\right)d \theta'd\theta dt\\
					&+\frac{1}{\varepsilon^2}\int_{0}^{T}\!\!\!\int_{0}^{\!\pi}\!\!n_+^{\varepsilon}(t,\theta')\int_{0}^{\!\pi}\!\! \frac{1}{\varepsilon}K_{\varepsilon+}\left(\theta',\frac{\theta-\theta'}{\varepsilon}\right)\Phi_+(t,\theta)d\theta d\theta' dt
					\\
					\xlongequal{z=\frac{\theta'-\theta}{\varepsilon},z'=\frac{\theta-\theta'}{\varepsilon}}&-\frac{1}{\varepsilon^2}\int_{0}^{T}\!\!\!\int_{0}^{\!\pi}\!\!\Phi_+(t,\theta)n_+^{\varepsilon}(t,\theta)\int_{\frac{-\theta-\pi}{\varepsilon}}^{\frac{-\theta+\pi}{\varepsilon}}\!K_{\varepsilon+}\left(\theta,z\right)d z d\theta dt\\
					&+\frac{1}{\varepsilon^2}\int_{0}^{T}\!\!\!\int_{0}^{\!\pi}\!\!n_+^{\varepsilon}(t,\theta')\int_{\frac{-\theta'}{\varepsilon}}^{\frac{-\theta'+\pi}{\varepsilon}} K_{\varepsilon+}\left(\theta',z'\right)\Phi_+(t,\theta'+\varepsilon z')dz' d\theta' dt
					\\
					=&\frac{1}{\varepsilon^2}\int_{0}^{T}\!\!\!\int_{0}^{\!\pi}\!\!n_+^{\varepsilon}(t,\theta)\int_{\frac{-\theta}{\varepsilon}}^{\frac{-\theta+\pi}{\varepsilon}}K_{\varepsilon+}\left(\theta,z\right)\left[\Phi_+(t,\theta+\varepsilon z)-\Phi_+(t,\theta)\right]dz d\theta dt\\
					&-\frac{1}{\varepsilon^2}\int_{0}^{T}\!\!\!\int_{0}^{\!\pi}\!\!n_+^{\varepsilon}(t,\theta)\int_{\frac{-\theta-\pi}{\varepsilon}}^{\frac{-\theta}{\varepsilon}}K_{\varepsilon+}\left(\theta,z\right)\Phi_+(t,\theta)d z d\theta dt. \\
					\triangleq&H_1^{\varepsilon}(t,\theta)+H_2^{\varepsilon}(t,\theta).
				\end{align*}
				In the subsequent part, we show that 
				\begin{align}\label{Phiplus}
					&\int_{0}^{T}\!\!\!\int_{0}^{\!\pi}\!\!-\tn_+\frac{\pt\Phi_+}{\pt t}(t,\theta)-V\sin\theta \tn(t,L,\theta)\Phi_+(t,\theta)d \theta dt\notag\\
					=&
					\lim\limits_{\varepsilon\to 0}\int_{0}^{T}\!\!\!\int_{0}^{\!\pi}\!\!-n_+^{\varepsilon}\frac{\pt\Phi_+}{\pt t}(t,\theta)-V\sin\theta n^{\varepsilon}(t,L,\theta)\Phi_+(t,\theta)d \theta dt\\
					=&\lim\limits_{\varepsilon\to 0}\left(H_{1}^{\varepsilon}(t,\theta)+H_{2}^{\varepsilon}(t,\theta)\right)=\int_{0}^{T}\!\!\!\int_{0}^{\!\pi}\!\!\tn_+(t,\theta)D_+(\theta)\frac{\pt^2 \Phi_+}{\pt \theta^2}(t,\theta)d\theta dt-\frac{1}{2}D_+(\theta)\int_{0}^{T}\!\!\!\tn_+(t,\theta)\frac{\pt \Phi_+}{\pt \theta}(t,\theta)dt\bigg|_{0}^{\!\pi}.\notag
				\end{align}
				Note that 
				\eq$supp_z\left\{K_{\varepsilon+}(\theta,z)\right\}\cup \left[\dfrac{-\theta}{\varepsilon},\dfrac{-\theta+\pi}{\varepsilon}\right]=
				\left\{
				\begin{aligned}
					&\left[\frac{-\theta}{\varepsilon},1\right], && \theta\in [0,\varepsilon], \\
					& [-1,1], && \theta\in [\varepsilon,\pi-\varepsilon],\\
					&\left[-1,\dfrac{-\theta+\pi}{\varepsilon}\right], && \theta\in [\pi-\varepsilon,\pi], \\
				\end{aligned}
				\right.
				\label{suppp}$
				\eq$supp_z\left\{K_{\varepsilon+}(\theta,z)\right\}\cup \left[\dfrac{-\theta-\pi}{\varepsilon},\dfrac{-\theta}{\varepsilon}\right]=
				\left\{
				\begin{aligned}
					&\left[-1,\frac{-\theta}{\varepsilon}\right], && \theta\in [0,\varepsilon], \\
					& \emptyset, && \theta\in [\varepsilon,\pi-\varepsilon],\\
					&\left[\frac{-\theta-\pi}{\varepsilon},-\frac{2\pi}{\varepsilon}+1\right], && \theta\in [\pi-\varepsilon,\pi], \\
				\end{aligned}
				\right.
				\label{suppm}$
				Hence we can divide $H_1^{\varepsilon}(t,\theta)$ into $H_{11}^{\varepsilon}(t,\theta)+H_{12}^{\varepsilon}(t,\theta)+H_{13}^{\varepsilon}(t,\theta)$, 
				where
				\begin{align*}
					H_{11}^{\varepsilon}(t,\theta)=&\frac{1}{\varepsilon^2}\int_{0}^{T}\!\!\!\int_{0}^{\varepsilon}n_+^{\varepsilon}(t,\theta)\int_{\frac{-\theta}{\varepsilon}}^{1}K_{\varepsilon+}\left(\theta,z\right)\left[\Phi_+(t,\theta+\varepsilon z)-\Phi_+(t,\theta)\right]dz d\theta dt \\
					=&\int_{0}^{T}\!\!\!n_+^{\varepsilon}(t,0)\frac{\pt \Phi_+}{\pt \theta}(t,0)\int_{-1}^{0} z^2 K_{\varepsilon+}\left(0,z\right)dz dt +O(\varepsilon)\\
					=&D_+(0)\int_{0}^{T}\!\!\!n_+^{\varepsilon}(t,0)\frac{\pt \Phi_+}{\pt \theta}(t,0)dt+O(\varepsilon),
				\end{align*}
				\begin{align*}
					H_{12}^{\varepsilon}(t,\theta)=&\frac{1}{\varepsilon^2}\int_{0}^{T}\!\!\!\int_{\varepsilon}^{\pi-\varepsilon}n_+^{\varepsilon}(t,\theta)\int_{-1}^{1}K_{\varepsilon+}\left(\theta,z\right)\left[\Phi_+(t,\theta+\varepsilon z)-\Phi_+(t,\theta)\right]dz d\theta dt \\
					=&\int_{0}^{T}\!\!\!\int_{0}^{\!\pi}\!\!n_+^{\varepsilon}(t,\theta)D_+(\theta)\frac{\pt^2 \Phi_+}{\pt \theta^2}(t,\theta)d\theta dt+O(\varepsilon),
				\end{align*}
				\begin{align*}
					H_{13}^{\varepsilon}(t,\theta)=&\frac{1}{\varepsilon^2}\int_{0}^{T}\!\!\!\int_{\pi-\varepsilon}^{\!\pi}n_+^{\varepsilon}(t,\theta)\int_{-1}^{\frac{-\theta+\pi}{\varepsilon}}K_{\varepsilon+}\left(\theta,z\right)\left[\Phi_+(t,\theta+\varepsilon z)-\Phi_+(t,\theta)\right]dz d\theta dt \\
					=&-D_+(\pi)\int_{0}^{T}\!\!\!n_+^{\varepsilon}(t,\pi)\frac{\pt \Phi_+}{\pt \theta}(t,\pi)dt+O(\varepsilon).
				\end{align*}
				And $H_2^{\varepsilon}(t,\theta)$ can be divided into $H_{21}^{\varepsilon}(t,\theta)+H_{22}^{\varepsilon}(t,\theta)$, where
				\begin{align*}
					H_{21}^{\varepsilon}(t,\theta)=&-\frac{1}{\varepsilon^2}\int_{0}^{T}\!\!\!\int_{0}^{\varepsilon}n_+^{\varepsilon}(t,\theta)\Phi_+(t,\theta)\int_{-1}^{\frac{-\theta}{\varepsilon}}K_{\varepsilon+}\left(\theta,z\right)dz d\theta dt \\
					=&-\frac{1}{\varepsilon^2}\int_{0}^{T}\!\!\!\Phi_+(0)n_+^{\varepsilon}(t,0)\int_{0}^{\varepsilon}\int_{-1}^{\frac{-\theta}{\varepsilon}}\left[K_{\varepsilon+}\left(0,z\right)+\theta\frac{\pt K_{\varepsilon+}}{\pt\theta}\left(0,z\right)\right]d z d\theta dt \\
					&-\frac{1}{\varepsilon^2}\int_{0}^{T}\!\left[\frac{\pt \Phi_+}{\pt \theta}(0)n_+^{\varepsilon}(t,0)+\Phi_+(0)\frac{\pt n_+^{\varepsilon}}{\pt \theta}(t,0)\right]\int_{0}^{\varepsilon}\int_{-1}^{\frac{-\theta}{\varepsilon}}\theta K_{\varepsilon+}\left(0,z\right)d z d\theta dt +O(\varepsilon)\\
					=&\left[-\frac{1}{\varepsilon}\int_{-1}^{0}-z K_{\varepsilon+}\left(0,z\right)d z-\frac{1}{2}z^2\frac{\pt K_{\varepsilon+}}{\pt\theta}\left(0,z\right)d z\right] \int_{0}^{T}\!\!\!\Phi_+(0)n_+^{\varepsilon}(t,0)dt \\
					&-\frac{1}{2}D_+(0)\int_{0}^{T}\!\left[\frac{\pt \Phi_+}{\pt \theta}(0)n_+^{\varepsilon}(t,0)+\Phi_+(0)\frac{\pt n_+^{\varepsilon}}{\pt \theta}(t,0)\right]dt+O(\varepsilon),
				\end{align*}
				\begin{align*}
					H_{22}^{\varepsilon}(t,\theta)=&-\frac{1}{\varepsilon^2}\int_{0}^{T}\!\!\!\int_{\pi-\varepsilon}^{\!\pi}n_+^{\varepsilon}(t,\theta)\Phi_+(t,\theta)\int_{\frac{-\theta-\pi}{\varepsilon}}^{-\frac{2\pi}{\varepsilon}+1}K_{\varepsilon+}\left(\theta,z\right)dz d\theta dt \\
					=&-\frac{1}{\varepsilon^2}\int_{0}^{T}\!\!\!\int_{\pi-\varepsilon}^{\!\pi}n_+^{\varepsilon}(t,\theta)\Phi_+(t,\theta)\int_{-1}^{\frac{\theta-\pi}{\varepsilon}}K_{\varepsilon+}\left(\theta,z\right)dz d\theta dt \\
					=&\left[-\frac{1}{\varepsilon}\int_{-1}^{0}-z K_{\varepsilon+}\left(\pi,z\right)d z+\int_{-1}^{0}\frac{1}{2}z^2 \frac{\pt K_{\varepsilon+}}{\pt\theta}\left(\pi,z\right)d z\right] \int_{0}^{T}\!\!\!\Phi_+(\pi)n_+^{\varepsilon}(t,\pi)dt\\
					&+\frac{1}{2}D_+(\pi)\int_{0}^{T}\!\left[\frac{\pt \Phi_+}{\pt \theta}(\pi)n_+^{\varepsilon}(t,\pi)+\Phi_+(\pi)\frac{\pt n_+^{\varepsilon}}{\pt \theta}(t,\pi)\right]dt+O(\varepsilon).
				\end{align*}
				
				Recall that the test function we use satisfies the Dirichlet boundary conditions \eqref{dbc4phip}, hence we conclude \eqref{Phiplus}.

				If we choose $\Phi_+(t,\theta)\in C_c^3([0,T]
				\times[0,\pi]),$
				it gives 
				$$\int_{0}^{T}\!\!\!\int_{0}^{\!\pi}\!\!-\tn_+^{\varepsilon}\frac{\pt\Phi_+}{\pt t}(t,\theta)-V\sin\theta \tn^{\varepsilon}(t,L,\theta)\Phi_+(t,\theta)d \theta dt=\int_{0}^{T}\!\!\!\int_{0}^{\!\pi}\!\!\tn_+^{\varepsilon}(t,\theta)D_+(\theta)\frac{\pt^2 \Phi_+}{\pt \theta^2}(t,\theta)d\theta dt,$$
				which is the weak form of \eqref{5.0b}. 
				
				On the other hand, if we choose $\Phi_+(t,\theta)\in C^3([0,T]
				\times[0,\pi])$ such that $\Phi_+(t,\theta)$ satisfy \eqref{dbc4phip} but $\frac{\pt \Phi_+}{\pt \theta}(t,0)\neq 0$, $\frac{\pt \Phi_+}{\pt \theta}(t,\pi)\neq 0$, then compared with the weak form of \eqref{5.0b}, we find
				$$\int_{0}^{T}\!\!\!\tn_+(t,0)\frac{\pt \Phi_+}{\pt \theta}(t,0)dt=\int_{0}^{T}\!\!\!\tn_+(t,\pi)\frac{\pt \Phi_+}{\pt \theta}(t,\pi)dt=0,$$
				which is the weak form of \eqref{5.0bc3}. 
				
				\textbf{Step III:}
				If we choose $\Phi(t,0)\neq 0$, $\Phi(t,\pi)\neq 0$, according to the proof in Step II, we have
				\begin{align*}
					&\int_{0}^{T}\!\!\!\int_{0}^{\!\pi}\!\!-n_+^{\varepsilon}\frac{\pt\Phi_+}{\pt t}(t,\theta)-V\sin\theta n^{\varepsilon}(t,L,\theta)\Phi_+(t,\theta)d \theta dt \\
					=&\int_{0}^{T}\!\!\!\int_{0}^{\!\pi}\!\!n_+D_+(\theta)\frac{\pt^2 \Phi_+}{\pt \theta^2}(t,\theta)d\theta dt+\frac{1}{2}D_+(\theta)\int_{0}^{T}\!\!\!\Phi_+(t,\theta)\frac{\pt n_+(t,\theta)}{\pt \theta}(t,\theta)dt\bigg|_{0}^{\!\pi}\\
					&-\lim\limits_{\varepsilon\to 0}\frac{1}{\varepsilon}\left[\int_{-1}^{0}-zK_{\varepsilon}(0,z)dz\int_{0}^{T}\!\!\!\Phi_+(t,0)n_+^{\varepsilon}(t,0)dt+\int_{-1}^{0}-zK_{\varepsilon}(\pi,z)dz\int_{0}^{T}\!\!\!\Phi_+(t,\pi)n_+^{\varepsilon}(t,\pi)dt\right].
				\end{align*}
				Compared the weak form of \eqref{5.0b}, it gives 
				\begin{align*}
					&\lim\limits_{\varepsilon\to 0}\frac{1}{\varepsilon}\left[\int_{-1}^{0}-zK_{\varepsilon}(0,z)dz\int_{0}^{T}\!\!\!\Phi_+(t,0)n_+^{\varepsilon}(t,0)dt+\int_{-1}^{0}-zK_{\varepsilon}(\pi,z)dz\int_{0}^{T}\!\!\!\Phi_+(t,\pi)n_+^{\varepsilon}(t,\pi)dt\right] \\
					=&-\frac{1}{2}D_+(\theta)\int_{0}^{T}\!\!\!\Phi_+(t,\theta)\frac{\pt n_+}{\pt \theta}(t,\theta)dt\bigg|_{0}^{\!\pi}.
				\end{align*}
				Since $\Phi_+(t,0)$ and $\Phi_+(t,\pi)$ are chosen arbitrary, we conclude that 
				\eq$\lim\limits_{\varepsilon\to 0}\frac{1}{\varepsilon}n_+^{\varepsilon}(t,0)=\frac{D_+(0)\frac{\pt \tn_+}{\pt \theta}(t,0)}{2\lim\limits_{\varepsilon\to 0}\int_{-1}^{0}-zK_{\varepsilon}(0,z)dz},\quad \lim\limits_{\varepsilon\to 0}\frac{1}{\varepsilon}n_+^{\varepsilon}(t,\pi)=\frac{-D_+(\pi)\frac{\pt \tn_+}{\pt \theta}(t,\pi)}{2\lim\limits_{\varepsilon\to 0}\int_{-1}^{0}-zK_{\varepsilon}(\pi,z)dz}\label{hoe}$
				hold in the weak sense, these limits would be used later.
				
				For any fixed $T>0$, consider a test function $\Phi(t,y,\theta)\in C^{3}([0,T]\times [-L,L]\times [-\pi,\pi])$, which has compact support in $t$, and satisfies periodic boundary condition in $\theta$ \eqref{pbphi}. 
				
				Following the similar procedures as in Step I, we obtain
				\begin{align*}
					\int_{0}^{T}\!\!\!\int_{-L}^{L}\!\int_{-\pi}^{\!\pi}\! -\Phi\left[\frac{\pt n^{\varepsilon}}{\pt t}(t,y,\theta)+V \sin\theta \frac{\pt n^{\varepsilon}}{\pt y} \right] d\theta dy dt
					=\int_{0}^{T}\!\!\!\int_{-L}^{L}\!\int_{-\pi}^{\!\pi}\! D(y,\theta) n^{\varepsilon}(t,y,\theta) \frac{\pt^2 \Phi}{\pt\theta^{2}}(t,y,\theta) d\theta dydt+O(\varepsilon),
				\end{align*}
				which implies
				\begin{align*}
					\int_{0}^{T}\!\!\!\int_{-L}^{L}\!\int_{-\pi}^{\!\pi}&\! -n^{\varepsilon}\left[\frac{\pt \Phi}{\pt t}+V \sin\theta \frac{\pt \Phi}{\pt y} -D(y,\theta)\frac{\pt^2\Phi}{\pt \theta^2}\right] d\theta dy dt\\
					=&-\!\!\int_{0}^{T}\!\!\!\int_{-\pi}^{\!\pi}\!V \sin\theta \left[n^{\varepsilon}(t,L,\theta)\Phi(t,L,\theta)-n^{\varepsilon}(t,-L,\theta)\Phi(t,-L,\theta)\right]d\theta dt.
				\end{align*}

				We first focus on the part $\int_{0}^{T}\!\!\!\int_{-\pi}^{0}\!V \sin\theta n^{\varepsilon}(t,L,\theta)\Phi(t,L,\theta)d\theta dt$. Since $K_{\varepsilon+}(\theta,z)$ is nonzero only if $z\in \left[-1+\frac{2k\pi}{\varepsilon},\notag\right. \\ \left.1+\frac{2k\pi}{\varepsilon}\right]$ for some $k\in \Z$, we have 
				
				\begin{align*}
					&\int_{0}^{T}\!\!\!\int_{-\pi}^{0}\!V \sin\theta n^{\varepsilon}(t,L,\theta)\Phi(t,L,\theta)d\theta dt\\
					\xlongequal{\eqref{5.3b}}&-\!\!\int_{0}^{T}\!\!\!\int_{-\pi}^{0}\!\Phi(t,L,\theta)\dfrac{1}{\varepsilon^{2}} \int_{0}^{\!\pi}\!\!\dfrac{1}{\varepsilon} K_{\varepsilon+}\left(\theta',\dfrac{\theta-\theta'}{\varepsilon}\right)n_{+}^{\varepsilon}(t,\theta') d\theta'd\theta dt \\
					\xlongequal{Assumption (A1)}&-\!\!\int_{0}^{T}\!\!\!\int_{-\pi}^{-\pi+\varepsilon}\Phi(t,L,\theta)\dfrac{1}{\varepsilon^{2}} \int_{\theta+2\pi-\varepsilon}^{\!\pi}\dfrac{1}{\varepsilon} K_{\varepsilon+}\left(\theta',\dfrac{\theta-\theta'}{\varepsilon}\right)n_{+}^{\varepsilon}(t,\theta') d\theta'd\theta dt \\
					&-\!\!\int_{0}^{T}\!\!\!\int_{-\varepsilon}^{0}\Phi(t,L,\theta)\dfrac{1}{\varepsilon^{2}} \int_{0}^{\theta+\varepsilon}\dfrac{1}{\varepsilon} K_{\varepsilon+}\left(\theta',\dfrac{\theta-\theta'}{\varepsilon}\right)n_{+}^{\varepsilon}(t,\theta') d\theta'd\theta dt\\
					=&-\dfrac{1}{\varepsilon^2}\int_{0}^{T}\!\!\!\Phi(t,L,-\pi)\int_{-\pi}^{-\pi+\varepsilon}\!\! \int_{\theta+2\pi-\varepsilon}^{\!\pi}\!\! \left[\frac{1}{\varepsilon}n_+^{\varepsilon}(t,\pi)+\dfrac{\theta'-\pi}{\varepsilon}\frac{\pt n_{+}^{\varepsilon}}{\pt \theta}(t,\pi) \right]K_{\varepsilon+}\left(\pi,\dfrac{\theta-\theta'}{\varepsilon}\right) d\theta'd\theta dt \\
					&-\dfrac{1}{\varepsilon^{2}}\int_{0}^{T}\!\!\!\Phi(t,L,0)\int_{-\varepsilon}^{0} \int_{0}^{\theta+\varepsilon}\left[\frac{1}{\
						\varepsilon}n_+^{\varepsilon}(t,0)+\dfrac{\theta'}{\varepsilon}\frac{\pt n_{+}^{\varepsilon}}{\pt \theta}(t,0)\right] K_{\varepsilon+}\left(0,\dfrac{\theta-\theta'}{\varepsilon}\right) d\theta'd\theta dt+O(\varepsilon)\\
					\xlongequal{z=\frac{\theta-\theta'}{\varepsilon}}&-\dfrac{1}{\varepsilon}\int_{-\pi}^{-\pi+\varepsilon}\!\! \int_{\frac{\theta-\pi}{\varepsilon}}^{-\frac{2\pi}{\varepsilon}+1}\!\!-K_{\varepsilon+}\left(\pi,z\right) dz d\theta \int_{0}^{T}\!\!\!\Phi(t,L,-\pi)\frac{1}{\varepsilon}n_+^{\varepsilon}(t,\pi)dt \\
					&-\dfrac{1}{\varepsilon}\int_{-\pi}^{-\pi+\varepsilon}\!\! \int_{\frac{\theta-\pi}{\varepsilon}}^{-\frac{2\pi}{\varepsilon}+1}\!\!-\left(\frac{\theta-\pi}{\varepsilon}-z\right)K_{\varepsilon+}\left(\pi,z\right) dz d\theta \int_{0}^{T}\!\!\!\Phi(t,L,-\pi)\frac{\pt n_+^{\varepsilon}(t,\pi)}{\pt \theta}dt\\
					&-\dfrac{1}{\varepsilon}\int_{-\varepsilon}^{0} \int_{-1}^{\frac{\theta}{\varepsilon}}- K_{\varepsilon+}\left(0,z\right) dz d\theta\int_{0}^{T}\!\!\!\Phi(t,L,0)\frac{1}{\varepsilon}n_+^{\varepsilon}(t,0)dt\\
					&-\dfrac{1}{\varepsilon}\int_{-\varepsilon}^{0} \int_{-1}^{\frac{\theta}{\varepsilon}}- \left(\frac{\theta}{\varepsilon}-z\right)K_{\varepsilon+}\left(0,z\right) dz d\theta \int_{0}^{T}\!\!\!\Phi(t,L,0)\frac{1}{\varepsilon}\frac{\pt n_+^{\varepsilon}(t,0)}{\pt \theta}dt+O(\varepsilon)\\
					=&-\!\!\int_{-1}^{0}-zK_{\varepsilon+}(\pi,z)dz\int_{0}^{T}\!\!\!\Phi(t,L,-\pi)\frac{1}{\varepsilon}n_+^{\varepsilon}(t,\pi)dt-\!\!\int_{-1}^{0}-zK_{\varepsilon+}(0,z)dz\int_{0}^{T}\!\!\!\Phi(t,L,0)\frac{1}{\varepsilon}n_+^{\varepsilon}(t,0)dt \\
					&+\frac{1}{2}\int_{-1}^{0}z^2K_{\varepsilon+}(\pi,z)dz\int_{0}^{T}\!\!\!\Phi(t,L,-\pi)\frac{\pt n_+^{\varepsilon}(t,\pi)}{\pt \theta}dt\\
					&-\frac{1}{2}\int_{-1}^{0}z^2K_{\varepsilon+}(0,z)dz\int_{0}^{T}\!\!\!\Phi(t,L,0)\frac{\pt n_+^{\varepsilon}(t,0)}{\pt \theta}dt+O(\varepsilon).
				\end{align*}
				
				Using \eqref{defD}, \eqref{hoe} and periodicity of $\Phi(t,L,\theta)$ in $\theta$, we have
				\eq$\int_{0}^{T}\!\!\!\int_{-\pi}^{0}\!V \sin\theta n^{\varepsilon}(t,L,\theta)\Phi(t,L,\theta)d\theta dt=D_+(\theta)\int_{0}^{T}\!\!\!\Phi(t,L,\theta)\frac{\pt n_+^{\varepsilon}}{\pt \theta}(t,\theta)dt\bigg|_{0}^{\!\pi}.\label{eq3.21}$
				Similarly, on $y=-L$ we have
				\eq$\int_{0}^{T}\!\!\!\int_{0}^{\!\pi}\!\!V \sin\theta n^{\varepsilon}(t,-L,\theta)\Phi(t,-L,\theta)d\theta dt=D_-(\theta)\int_{0}^{T}\!\!\!\Phi(t,-L,\theta)\frac{\pt n_-^{\varepsilon}}{\pt \theta}(t,\theta)dt\bigg|_{-\pi}^{0}.\label{eq3.22}$
				We can take $\Phi(t,L,0)=\Phi(t,L,\pi)=\Phi(t,-L,-\pi)=\Phi(t,-L,0)=0$, then we pass to the limit and obtain the weak form of \eqref{5.0bc2}. 
				
				On the other hand, when we take $\Phi(t,L,\theta)=0$ for $\theta\in(0,\pi)$ and $\Phi(t,-L,\theta)=0$ for $\theta\in (-\pi,0)$, we get
				\begin{align*}
					\int_{0}^{T}\!\!\!\int_{-L}^{L}\!\int_{-\pi}^{\!\pi}&\! -n^{\varepsilon}\left[\frac{\pt \Phi}{\pt t}+V \sin\theta \frac{\pt \Phi}{\pt y} -D(y,\theta)\frac{\pt^2\Phi}{\pt \theta^2}\right] d\theta dy dt\\
					=&-\!\!\int_{0}^{T}\!\!\!\int_{-\pi}^{0}\!V \sin\theta n^{\varepsilon}(t,L,\theta)\Phi(t,L,\theta)d\theta dt+\int_{0}^{T}\!\!\!\int_{0}^{\!\pi}\!\!V\sin\theta n^{\varepsilon}(t,-L,\theta)\Phi(t,-L,\theta)d\theta dt \\
					\xlongequal{\eqref{eq3.21},\eqref{eq3.22}}&-D_+(\theta)\int_{0}^{T}\!\!\!\Phi(t,L,\theta)\frac{\pt n_+^{\varepsilon}}{\pt \theta}(t,\theta)dt\bigg|_{0}^{\!\pi}+D_-(\theta)\int_{0}^{T}\!\!\!\Phi(t,-L,\theta)\frac{\pt n_-^{\varepsilon}}{\pt \theta}(t,\theta)dt\bigg|_{-\pi}^{0},
				\end{align*}
				then we pass to the limit and get the weak form of \eqref{5.0bc4} and \eqref{5.0bc5}. Hence we have completed the proof.
			\end{proof}

		\section{Numerical Simulations}
		\label{sec:numerics}
		We perform numerical simulations of the CRTM \eqref{2.1} in order to illustrate the theoretical results and investigate some qualitative phenomena. Dimensionless parameters are used in our simulations. The moving speed is fixed as $V = 20$, the distance between the two horizontal plates is $2L = 20$, and the transition kernel is set as in \eqref{eq::3.10} and \eqref{eq::3.11}. If not being specified, we take $k(y,\theta)=1$ in the following examples for convenience. All the simulations were conducted by the method described in \ref{subsec::4.1}-\ref{subsec::4.3} and were performed on the “Siyuan Mark-I” cluster at Shanghai Jiao Tong University, which comprises 2 × Intel Xeon ICX Platinum 8358 CPU (2.6 GHz, 32 cores) and 512 GB memory per node.
		
		\begin{figure}[!htb]
			\centering
			\begin{tikzpicture}[scale=1.5]
				\draw[step=0.25] (-2,-2) grid (2,2);
				\draw[->] (-3.2,0) -- (3.2,0) node[right] {$y$};
				\draw[->] (0,-3.2) -- (0,3.2) node[above] {$\theta$};
				\node at (-2.3,0.2) {$-L$};
				\node at (2.2,0.2) {$L$};
				\node at (0.2,2.2) {$\pi$};
				\node at (0.3,-2.2) {$-\pi$};
				
				\fill (-2.125,1.125) circle (.03);
				\foreach \i in {1,2,...,16} {
					\foreach \j in {1,2,...,16} {
						\fill (-2.125+0.25*\i,-2.125+0.25*\j) circle (.03) ;
					}
				}    
				\foreach \j in {1,2,...,8} {
					\fill[blue] (2,-0.125+0.25*\j) circle (.05) ;
					\fill[red] (-2,-2.125+0.25*\j) circle (.05) ; 
				}
				
				\draw[->] (-2,-3) node [left] {$1$} -- (2,-3) node[right] {$2I$};
				\draw[->] (-3,-2) node [below] {$1$} -- (-3,2) node[above] {$2J$};
			\end{tikzpicture}
			\caption{Stencil of the semi-discretized scheme. Here nodes for $p(t,y_i,\theta_j)$ inside $\Omega$ are black, while nodes for $p_{\pm}$ are blue and red respectively.}
			\label{Mesh}
		\end{figure}
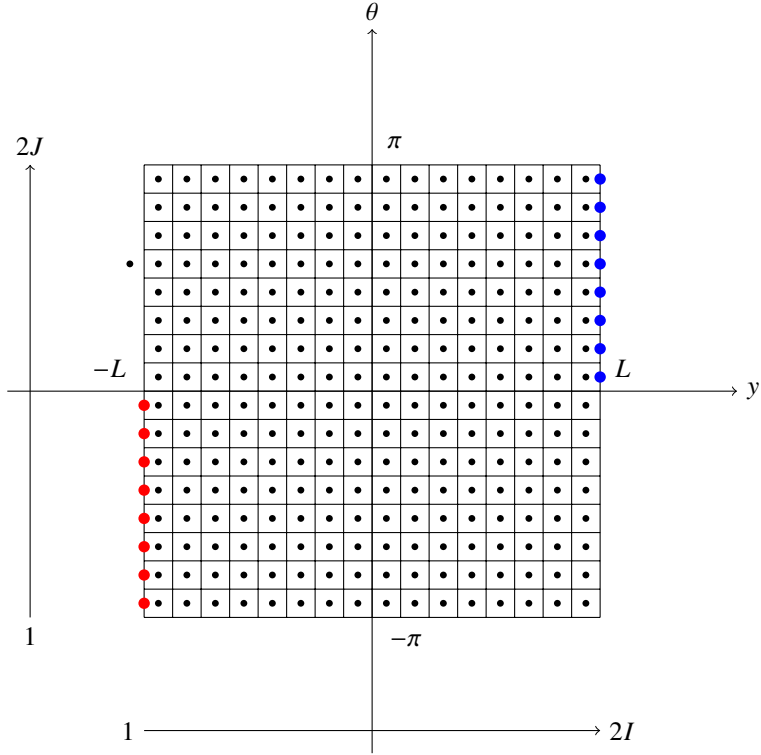
		
		\subsection{Description of numerical scheme}\label{subsec::4.1}
		
		We consider an uniform mesh in both $y$ and $\theta$ in a rectangular domain $\Omega=[-L,L]\times [-\pi,\pi]$. The two integers $N_y$ and $N_{\theta}$ define the mesh sizes of $\Omega$, $\Delta y= L/N_y$ and $\Delta\theta=\pi/N_{\theta}$ along the $y$ and $\theta$ axes, respectively. The index sets are defined as 
		\begin{equation}
			I_{y}=\{1,2,\cdots,2N_{y}\},~~I_{\theta+}=\{N_{\theta+1},\cdots,2N_{\theta}\},~~I_{\theta-}=\{1,\cdots,N_{\theta}\},~~I_{\theta}=I_{\theta+}\cup I_{\theta-}.
		\end{equation}
		The grids inside the computational domain are
		\begin{equation}
			(y_i,\theta_j)=((i-N_{y}-1/2)\Delta y,~(j-N_{\theta}-1/2)\Delta\theta),~~i\in I_{y},~~j\in I_{\theta},
		\end{equation}
		and the nodes at the boundaries are
		\begin{equation}
			(\pm L,~\theta_j),~~j\in I_{\theta}.
		\end{equation}
		The unknowns are $n_{\pm,j}\approx n_{\pm}^{\varepsilon}(t,\theta_j)$ and 
		\begin{equation}
			n_{i,j}\approx \dfrac{1}{\Delta y}\int_{y_i-\Delta y/2}^{y_i+\Delta y/2}n^{\varepsilon}(t,y_i,\theta_j)dy.
		\end{equation}
		
		We use upwind discretization for $\frac{\pt n}{\pt y}$ which has first-order convergence w.r.t $y$, and forward Euler method for $\frac{\pt n}{\pt t}$ which has first-order convergence w.r.t $t$.  The semi-discretized scheme for the scaled model \eqref{5.1} writes
		\begin{equation}
			\begin{cases}
				\dfrac{d n_{i,j}}{d t}+V\sin\theta_j\dfrac{n_{i,j}-n_{i-1,j}}{\Delta y}+\dfrac{1}{\varepsilon^2}\left(k_{i,j}n_{i,j}-\dfrac{1}{\varepsilon}\sum\limits_{j'\in I_{\theta}}K_{i,j',j}n_{i,j'}\right)=0,~~i\in I_{y}, ~~j\in I_{\theta+},
				\\[12pt]
				\dfrac{d n_{i,j}}{d t}+V\sin\theta_j\dfrac{n_{i+1,j}-n_{i,j}}{\Delta y}+\dfrac{1}{\varepsilon^2}\left(k_{i,j}n_{i,j}-\dfrac{1}{\varepsilon}\sum\limits_{j'\in I_{\theta}}K_{i,j',j}n_{i,j'}\right)=0,~~i\in I_{y}, ~~j\in I_{\theta-},
				\\[12pt]
				\dfrac{d n_{+,j}}{d t}+\dfrac{1}{\varepsilon^2}\left(k_{2N_{y},j}n_{+,j}-\dfrac{1}{\varepsilon}\sum\limits_{j'\in I_{\theta+}}K_{+,j',j}n_{+,j'}\right)=V\sin\theta_{j}n_{2N_{y},j},~~j\in I_{\theta+},\\[12pt]
				\dfrac{d n_{-,j}}{d t}+\dfrac{1}{\varepsilon^2}\left(k_{1,j}n_{-,j}-\dfrac{1}{\varepsilon}\sum\limits_{j'\in I_{\theta-}}K_{-,j',j}n_{-,j'}\right)=-V\sin\theta_{j}n_{1,j},~~j\in I_{\theta-},
			\end{cases}
		\end{equation}
		where 
		\begin{equation}\label{eq::4.7}
			\begin{split}
				&k_{i,j}=k(y_i,\theta_j), \qquad K_{i,j,j'}=\int_{\theta_{j'}-\frac{\Delta\theta}{2}}^{\theta_{j'}+\frac{\Delta\theta}{2}} K\left(y_i,\theta_j,\dfrac{\theta'-\theta_j}{\varepsilon}\right)\dd\theta',\\
				&k_{\pm,j}=k_{\pm}(\theta_j), \qquad K_{\pm,j,j'}=\int_{\theta_{j'}-\frac{\Delta\theta}{2}}^{\theta_{j'}+\frac{\Delta\theta}{2}} K_{\pm}\left(\theta_j,\dfrac{\theta'-\theta_j}{\varepsilon}\right)\dd\theta'.
			\end{split}
		\end{equation}
		
		Since both $K$ and $n'$ have no singularity, the integrals in \eqref{eq::4.7} are discretized using either the adaptive Gauss-Legendre integral
		quadrature \cite{shampine2008vectorized} (function ”quadgk” in MATLAB) or the trapezoid rule \cite{trefethen2014exponentially} for their efficiency. The boundary conditions in \eqref{5.3b} and \eqref{5.3c} are discretized as
		\begin{equation}
			\begin{cases}
				-V\sin\theta_j n_{2N_{y}+1,j}= \dfrac{1}{\varepsilon^2}\sum\limits_{j'\in I_{\theta+}}\dfrac{1}{\varepsilon}K_{+,j',j}n_{+,j'},~\quad j\in I_{\theta-},\\[12pt]
				V\sin\theta_j n_{0,j}=\dfrac{1}{\varepsilon^2} \sum\limits_{j'\in I_{\theta-}}\dfrac{1}{\varepsilon}K_{-,j',j}n_{-,j'},~\quad j\in I_{\theta+}.
			\end{cases}
		\end{equation}
		
		Non-negative initial data $n_{i,j}(0)$ and $n_{\pm,j}(0)$ are chosen such that the condition
		\begin{equation}
			\Delta y\Delta\theta\sum_{(i,j)\in I_{y}\times I_{\theta}}n_{i,j}(0)+\Delta\theta\sum_{j\in I_{\theta+}}n_{+,j}(0)+\Delta\theta \sum_{j\in I_{\theta-}}n_{-,j}(0)=1
		\end{equation}
		is satisfied and it is easy to verify that the numerical scheme conserves the discrete total mass
		defined by
		\begin{equation}
			\Delta y\Delta\theta\sum_{(i,j)\in I_{y}\times I_{\theta}}n_{i,j}(t)+\Delta\theta\sum_{j\in I_{\theta+}}n_{+,j}(t)+\Delta\theta \sum_{j\in I_{\theta-}}n_{-,j}(t).
		\end{equation}
		
		\begin{remark}
			Here we have used 
			\eq$k_{i,j}=\frac{1}{\Delta \theta\Delta y}\int_{y_i-\frac{\Delta y}{2}}^{y_i+\frac{\Delta y}{2}}\int_{\theta_j-\frac{\Delta\theta}{2}}^{\theta_j+\frac{\Delta\theta}{2}} k(y,\theta)\dd\theta\dd y,$
			\eq$K_{i,j,j'}=\frac{1}{\Delta \theta^2\Delta y}\int_{y_i-\frac{\Delta y}{2}}^{y_i+\frac{\Delta y}{2}}\int_{\theta_j-\frac{\Delta\theta}{2}}^{\theta_j+\frac{\Delta\theta}{2}} \int_{\theta_{j'}-\frac{\Delta\theta}{2}}^{\theta_{j'}+\frac{\Delta\theta}{2}} K\left(y,\theta,\dfrac{\theta'-\theta}{\varepsilon}\right)\dd\theta'\dd\theta\dd y,$
			to approximate $k(y,\theta)$, $K(y,\theta,\theta')$. The key point for mass conservation is to keep 
			\eq$k_{i,j}=\dfrac{1}{\varepsilon}\sum_{j'\in I_{\theta}}K_{i,j,j'},\qquad k_{\pm,j}=\dfrac{1}{\varepsilon}\sum_{j'\in I_{\theta}}K_{\pm,j,j'}.$
		\end{remark}
		
		When $t\rightarrow \infty$, the solution of the time-dependent model will numerically converge to the solution of the following discretized steady-state equation
		\begin{equation}
			\begin{cases}
				V\sin\theta_j\dfrac{m_{i,j}-m_{i-1,j}}{\Delta y}+\dfrac{1}{\varepsilon^2}\left(k_{i,j}m_{i,j}-\dfrac{1}{\varepsilon}\sum\limits_{j'\in I_{\theta}}K_{i,j',j}m_{i,j'}\right)=0,~~i\in I_{y}, ~~j\in I_{\theta+},\\
				V\sin\theta_j\dfrac{m_{i+1,j}-m_{i,j}}{\Delta y}+\dfrac{1}{\varepsilon^2}\left(k_{i,j}m_{i,j}-\dfrac{1}{\varepsilon}\sum\limits_{j'\in I_{\theta}}K_{i,j',j}m_{i,j'}\right)=0,~~i\in I_{y}, ~~j\in I_{\theta-},\\
				\dfrac{1}{\varepsilon^2}\left(k_{2N_{y},j}m_{+,j}-\dfrac{1}{\varepsilon}\sum\limits_{j'\in I_{\theta+}}K_{+,j',j}m_{+,j'}\right)=V\sin\theta_{j}m_{2N_{y},j},~~j\in I_{\theta+},\\
				\dfrac{1}{\varepsilon^2}\left(k_{1,j}m_{-,j}-\dfrac{1}{\varepsilon}\sum\limits_{j'\in I_{\theta-}}K_{-,j',j}m_{-,j'}\right)=-V\sin\theta_{j}m_{1,j},~~j\in I_{\theta-},
			\end{cases}
		\end{equation}
		with boundary conditions 
		\begin{equation}
			\begin{cases}
				-V\sin\theta_j m_{2N_{y}+1,j}= \dfrac{1}{\varepsilon^2}\sum\limits_{j'\in I_{\theta+}}\dfrac{1}{\varepsilon}K_{+,j',j}m_{+,j'},~~j\in I_{\theta-},\\
				V\sin\theta_j m_{0,j}=\dfrac{1}{\varepsilon^2} \sum\limits_{j'\in I_{\theta-}}\dfrac{1}{\varepsilon}K_{-,j',j}m_{-,j'},~~ j\in I_{\theta+},
			\end{cases}
		\end{equation}
		whose total mass satisfies
		\begin{equation}
			\Delta y\Delta\theta\sum_{(i,j)\in I_{y}\times I_{\theta}}m_{i,j}+\Delta\theta\sum_{j\in I_{\theta+}}m_{+,j}+\Delta\theta \sum_{j\in I_{\theta-}}m_{-,j}=1.
		\end{equation}
		
		
		We define the following weighted $L^2$ norm to measure the numerical errors
		
		\begin{equation}\label{error}
			\begin{split}
				\left\|\left(e\right)\right\|_{L_{w}^{2}}^{2}&=\sum_{i \in I_{y}} \sum_{j \in I_{\theta}} \Delta\theta^{2} \Delta y^{2} e_{i, j}^{2},\\
				\left\|\left( e_{+}, e_{-}\right)\right\|_{L_{w}^{2}}^{2}&=\sum_{j \in I_{\theta_{+}}} \Delta\theta^{2} e_{+, j}^{2}+\sum_{j \in I_{\theta_{-}}} \Delta\theta^{2} e_{-, j}^{2}.
			\end{split}
		\end{equation}
		
		\eqref{Table1} gives the numerical errors of $n_{i,j}$ and $n_{\pm,j}$ at time $t=0.2$, calculated with different mesh sizes, where the reference solution is computed with a fine mesh $256 \times 256$ and cubic interpolation implemented in Matlab. Here we take $\varepsilon=0.05$ and $\Delta t=0.001$. From Fig. \ref{fig:Error}, first-order convergence can be observed for both $y$ and $\theta$.
		
		\renewcommand\arraystretch{1.6}
		\begin{table*}[!htbp]
			\centering
			\setlength{\tabcolsep}{2.8mm}{
				\begin{tabular}{ccccccc}
					\hline 
					& &\multicolumn{5}{c}{$\Delta\theta$}
					\\
					& & $\pi/8$ & $\pi/16$ & $\pi/32$ & $\pi/64$ & $\pi/128$
					\\\hline \multirow{5}{0.5cm}{$\Delta y$} &  $L/8$&  5.45E-2 &  2.78E-2 & 1.38E-2 & 6.82E-3 & 3.42E-3 \\
					&$L/16$& 2.57E-2 &  1.36E-2 & 6.92E-3 & 3.35E-3 & 1.68E-3\\
					&$L/32$& 1.49E-2 & 7.13E-3 &  3.65E-3& 1.70E-3 & 8.54E-4\\
					&$L/64$& 7.80E-3  & 4.09E-3 & 2.09E-3 &  9.03E-4& 4.58E-4\\
					&$L/128$& 5.02E-3 & 2.60E-3 & 1.33E-3 & 5.06E-4 &2.59E-4
					\\\hline 
			\end{tabular}}
			\caption{$\left\|\left(e\right)\right\|_{L_{w}^{2}}^{2}+\left\|\left(e_{+}, e_{-}\right)\right\|_{L_{w}^{2}}^{2}.$ The
				weighted $L^2$ norm of the
				numerical errors as in \eqref{error}. First order convergence rate is observed with respect to both $\Delta \theta$ and~$\Delta y$.}\label{Table1}
		\end{table*}
		
		
		\begin{figure}[!ht]
			\centering
			\includegraphics[width=1\linewidth]{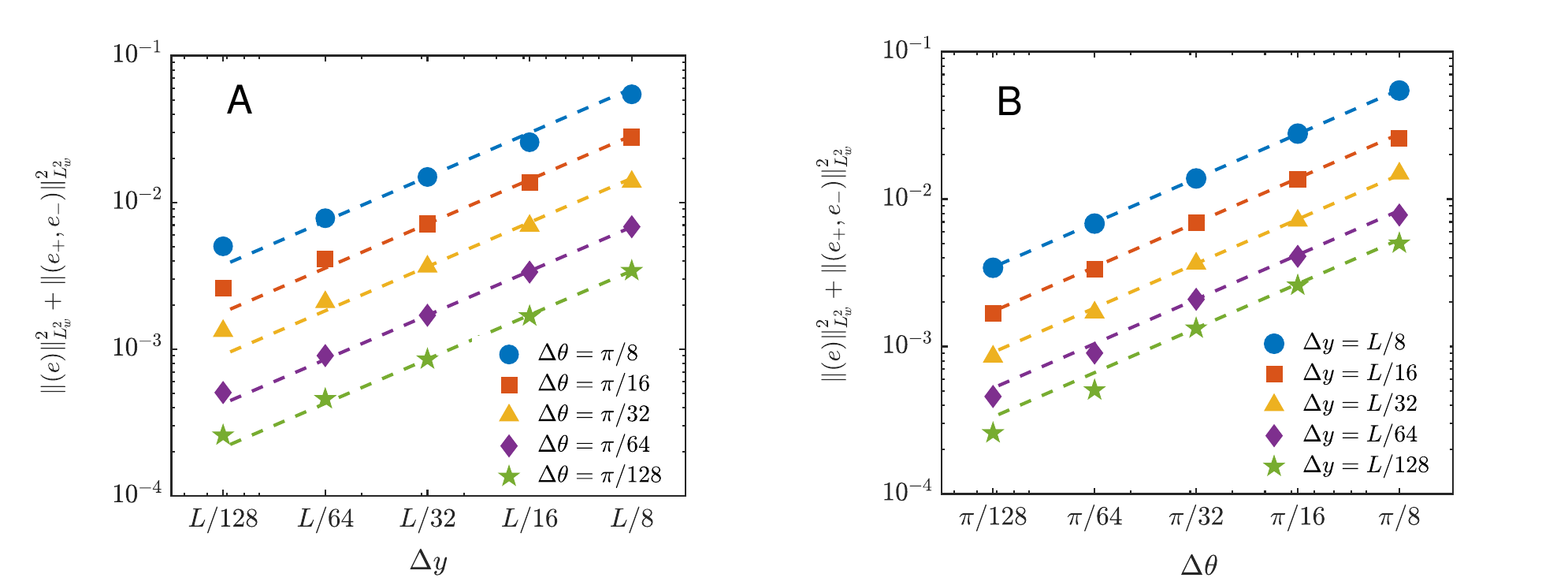}\\
			\caption{Plots of $\left\|\left(e\right)\right\|_{L_{w}^{2}}^{2}+\left\|\left(e_{+}, e_{-}\right)\right\|_{L_{w}^{2}}^{2}$ in \ref{Table1}, as a function of (A) $\Delta y$ and (B) $\Delta \theta$. The scheme is first order in both $y$ and~$\theta$.}
			\label{fig:Error}
		\end{figure}
		
		\subsection{Numerical comparisons between the SDE and scattering models}\label{subsec::4.2}
		The SDE \eqref{svi} is discretized with the Euler-Maruyama-like scheme \cite{Kloeden1992}. The computational domain is $\Omega=\{(y,\theta)|-L\leq y\leq L,~-\pi\leq\theta\leq \pi\}$ and $N_{\text{cell}}=5\times 10^7$ independent cells are employed for data statistics. Each cell is represented by its position $Y_{i}$ along the $y$-axis  and its orientation $\Theta_{i}$. The initial positions for all cells are uniformly distributed on $(-L,L)$, and the initial orientations are uniformly distributed on $(-\pi,\pi)$. Let $\Delta t$ be the integration time step. The algorithm for evolving $(Y_{i}^n,\Theta_{i}^n)(i=1,2,\cdots,N_{\text{cell}})$ at time $t^n=n\Delta t$ is described in \ref{algorithm}. 
		\begin{algorithm}
			\caption{Monte Carlo simulation}
			\label{algorithm}
			\begin{algorithmic}[1]
				\State For each cell, generate a number of jump $P_i^n$ from a Poisson process with intensity
				\eq$\Gamma_i^n=\left\{\begin{aligned}
					&k(Y_i^{n-1},\Theta_i^{n-1}), && \abs{Y_i^{n-1}}<L, \\
					&k_{\pm}(\Theta_i^{n-1}), && Y_i^{n-1}=\pm L, 
				\end{aligned}\right.$
				at time $\Delta t$, i.e.
				\begin{equation}
					\mathbb{P}(P_i^n=J)=\frac{(\Gamma_i^n \Delta t)^J}{J!}e^{-\Gamma_i^n \Delta t}, \quad J\in \mathbb{N}.
				\end{equation} 
				\State Sample $P_i^n$ random increments $\{\Delta \Theta_{i,j}^n\}_{j=1,\cdots,P_i^n}$, which are independent identically distributed from the distribution \eq$\left\{\begin{aligned}
					& \frac{K_{\varepsilon}\left(Y_i^{n-1},\Theta_i^{n-1}, \dfrac{\Delta \Theta}{\varepsilon}\right)}{k(Y_i^{n-1},\Theta_i^{n-1})}, && \abs{Y_i^{n-1}}<L, \\
					& \frac{K_{\varepsilon\pm}\left(\Theta_i^{n-1}, \dfrac{\Delta \Theta}{\varepsilon}\right)}{k_{\pm}(\Theta_i^{n-1})}, && Y_i^{n-1}=\pm L.
				\end{aligned}\right.$
				\State Sample $P_i^n$ i.i.d random numbers $\{T_j\}_{j=1,\cdots,P_i^n}$ which are uniformly distributed over $[0,1]$. Let $T_0=0$ and $T_{P_i^n+1}=1$. 
				\State Initialize $Y_i^n=Y_i^{n-1}$, $\Theta_i^n=\Theta_i^{n-1}$.
				\For{$j=1,2,\cdots P_i^n$}
				\State Compute $\tilde{Y}_i^n=Y_i^n+V\sin(\Theta_i^n)(T_j-T_{j-1})\Delta t$. Evolve the position $Y_i^n$ by
				\eq$Y_i^n=\left\{
				\begin{aligned}
					&sign(\tilde{Y}_i^n) L, &&  \abs{\tilde{Y}_i^n}>L, \\
					&\tilde{Y}_i^n, && \abs{\tilde{Y}_i^n}\leq L,.
				\end{aligned}\right.$
				\State Evolve the orientation $\Theta_i^n$ by $\Theta_i^n=\Theta_i^n+\Delta \Theta_{i,j}^n$.
				\EndFor
			\end{algorithmic}
		\end{algorithm}
		
		The above procedure is independent for different cells and thus is easy to implement in parallel. We take $\Delta t = 0.001$ and $\varepsilon=0.05$ for both the PDE and the SDE. For the SDE, we count the number of particles inside each cell of a $200\times200$ uniform mesh and normalize them by the total number of particles.
		
		\begin{figure}[!ht]
			\centering
			\includegraphics[width=\linewidth]{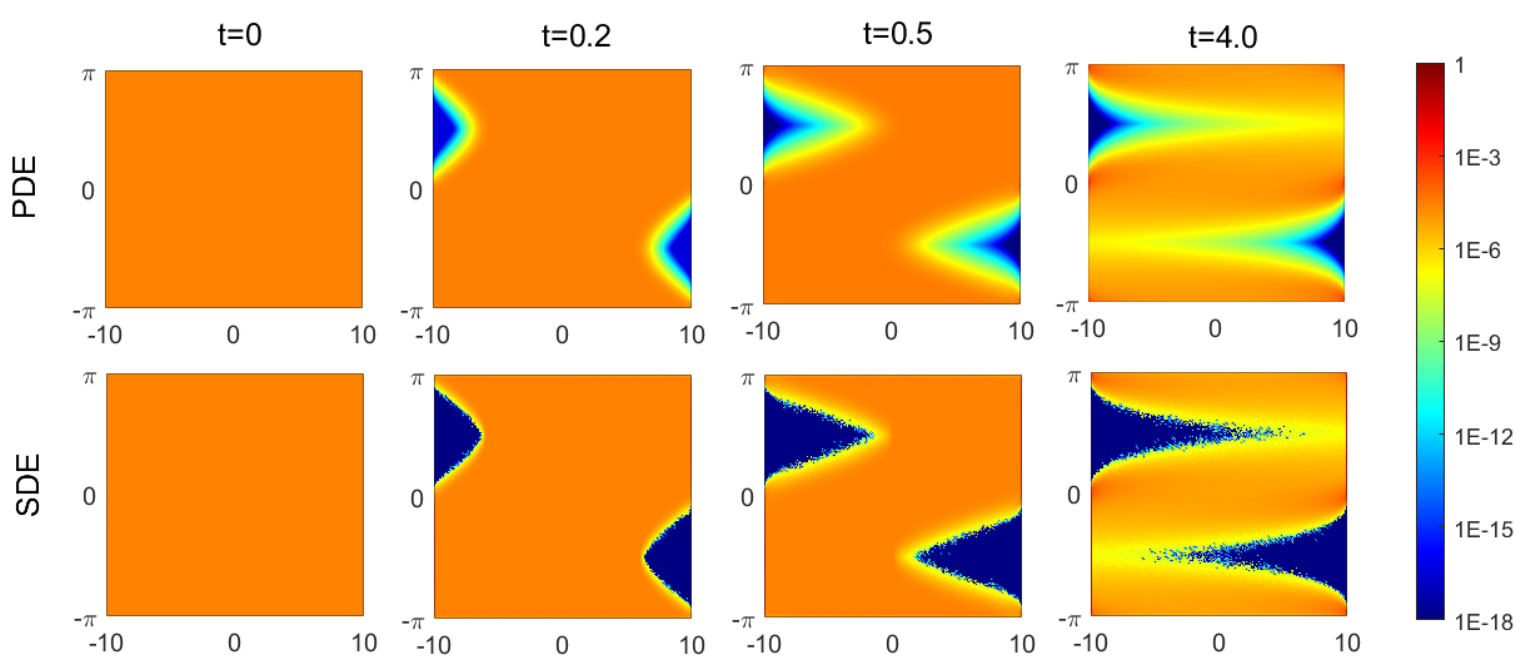}\\
			\caption{Density in the bulk $n_{i,j}$ at different times calculated by the PDE model and the SDE model.}
			\label{fig:ComparisonSDEModel}
		\end{figure}
		
		The discretization described in \ref{subsec::4.1} is used to compute the numerical solution
		of the PDE model. In practice, we take $N_{y} = N_{\theta} = 100$ such that the numerical results of
		the two models have the same resolution. The initial condition is consistent with the SDE model, i.e. $n_{i,j} = 1/4\pi L$ for all $i \in I_{y}, j \in I_{\theta}$. The time evolution of bulk density $n(t,y,\theta)$ by numerical simulations based on both SDE and PDE models is shown in Fig. \ref{fig:ComparisonSDEModel}. The results of the two models are comparable. There exist two wells when $y =-L,\theta \in (0,\pi)$ and $y = L,
		\theta \in (-\pi, 0)$, since cells with positive (negative) $\theta$ tend to leave the boundary. When $t$ is
		larger, the well will stretch across the bulk region and the cell will aggregate on the boundaries.
		
		\begin{figure}[!ht]
			\centering
			\includegraphics[width=\linewidth]{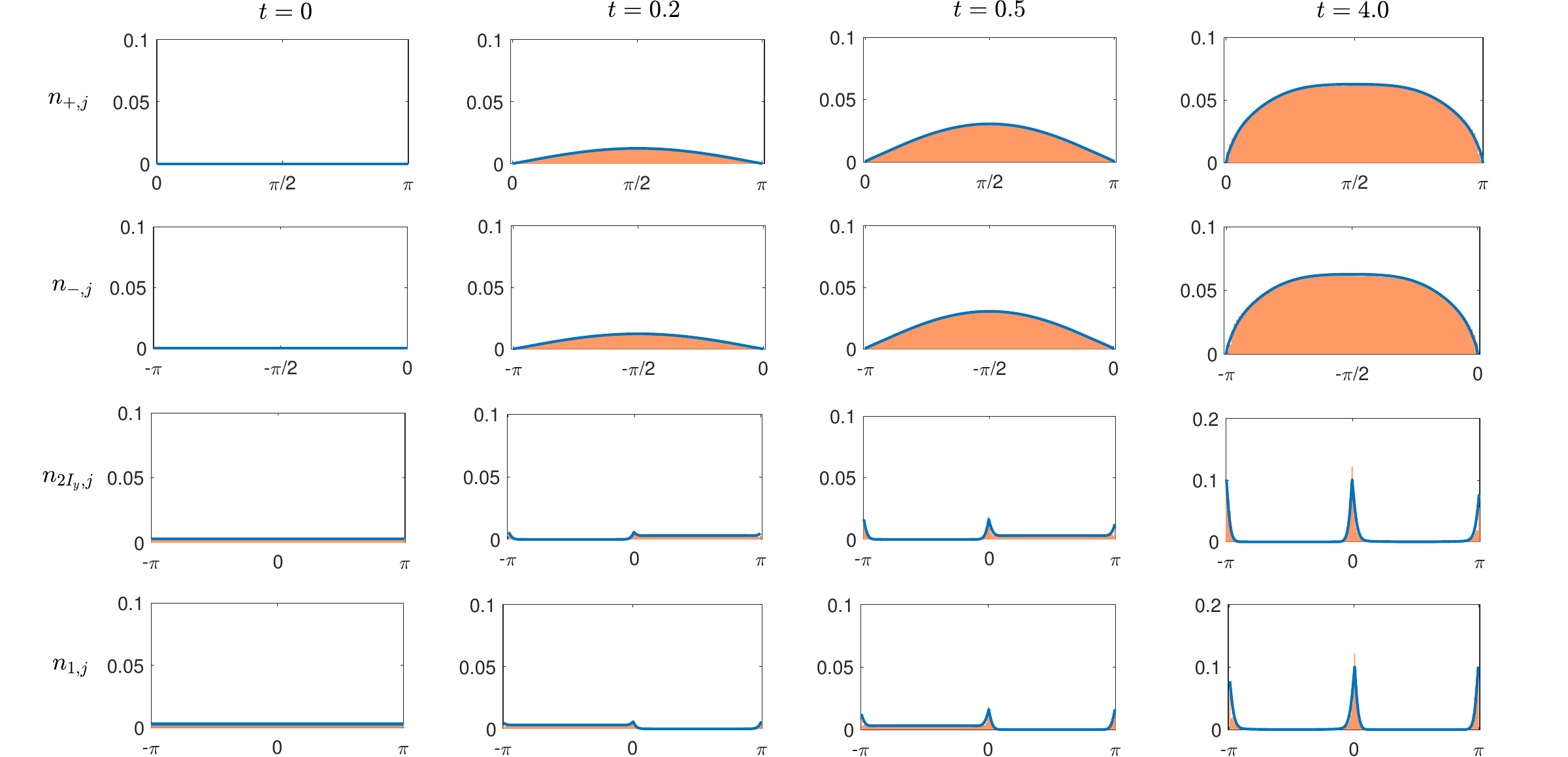}\\
			\caption{Density in the bulk near boundary $n_{1,j}$, $n_{2I_y,j}$ and boundary density $n_{-,j}$ and $n_{+,j}$ (from bottom to top) at different times calculated by the PDE model (line) and SDE model (histogram). Notice that $n_{1,j}$ and $n_{2N_y,j}$ is close to $y=-L$ and $y=L$, but not at the boundary, so the boundary condition (1.3) is not satisfied for $n_{1,j}$ and $n_{2N_y,j}$, respectively.}
			\label{fig:Boundary}
		\end{figure}
		
		To verify the behavior of $n(t,y,\theta)$ near the boundaries, $n_{1,j}$ and $n_{2I_y,j}$ are plotted in Fig. \ref{fig:Boundary}. As $t$ becomes larger, the peaks at $(\pm L, 0)$, $(\pm L, \pm\pi )$ become sharper. The height of peaks stops increasing when $t$ reaches $4$. The probability density distributions of cells on both left and right boundaries are plotted in Fig. \ref{fig:Boundary} as well, we can see that their maximum increase with time until they reach a constant.
		
		
		
		
		\subsection{Asymptotic limit from CRTM towards CFPM}
		\label{subsec::4.3}
		In \ref{Fokker-PlanckLimit}, we have proved that the weak limit of the solution $n^{\varepsilon}$ of the scaled CRTM \eqref{5.1}--\eqref{5.3} satisfies the Fokker-Planck system, as well as the boundary conditions, in the distributional sense. In this subsection, we give some numerical examples to illustrate this convergence. We take $\Delta t=0.0001$, $N_y=N_{\theta}=400$, $t=4.0$, and vary the $\varepsilon$. The results are shown in Fig. \ref{fig::Asymptotic}. When $\varepsilon$ decreases, more cells tend to accumulate on the boundary. 
		
		\begin{figure}[!ht]
			\centering
			\includegraphics[width=0.9\linewidth]{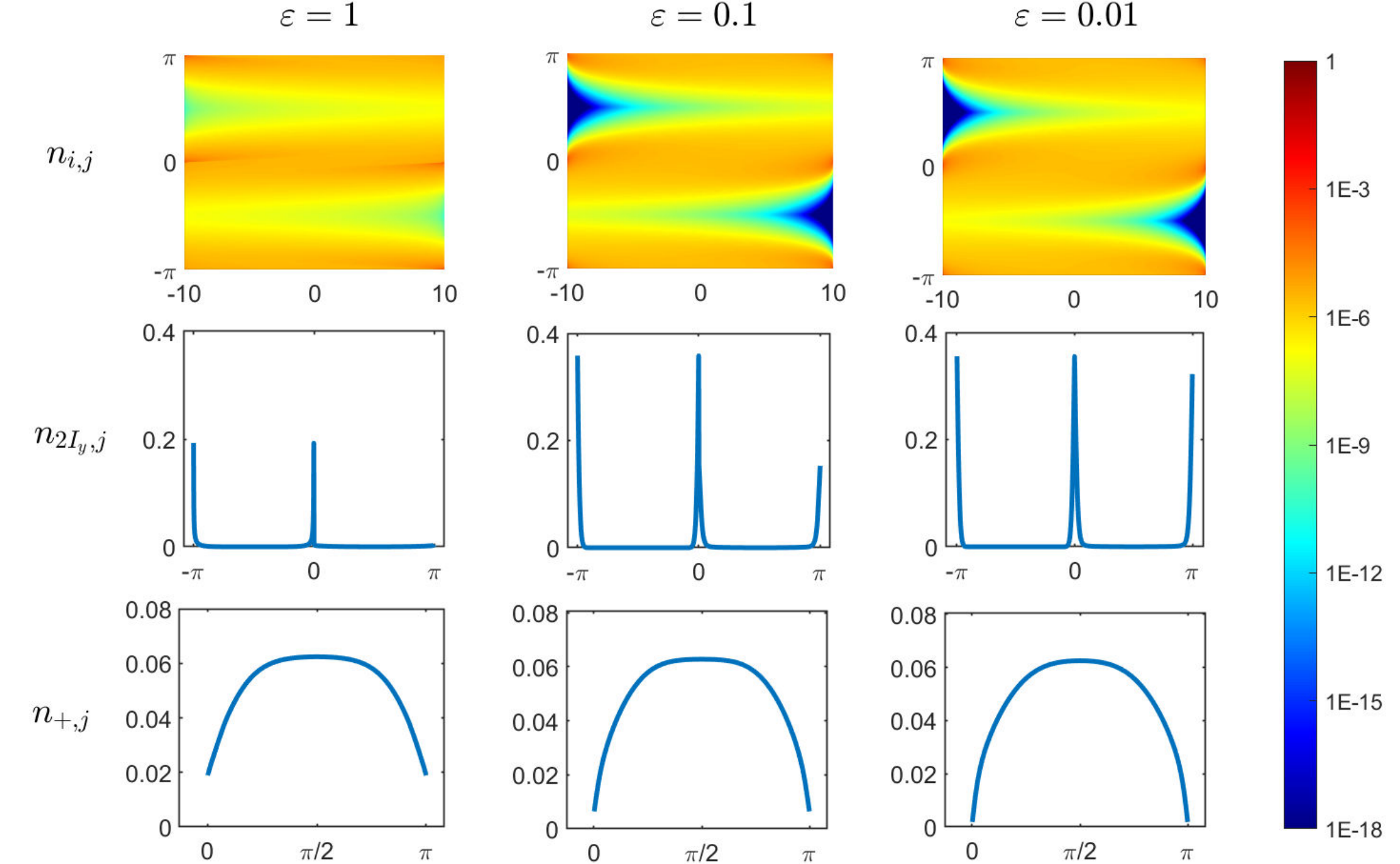}\\
			\caption{Density in the bulk $n_{i,j}$, density in the bulk near the boundary $n_{2I_y,j}$, and boundary density $n_{+,j}$ at time $t=4.0$ calculated by the CRTM using different $\varepsilon$.}
			\label{fig::Asymptotic}
		\end{figure}
		
		To further understand the behavior of the motion of cells, we examine the ratio of the number of cells in the bulk, $M_i(t)$, to the number of cells on the boundary, $M_b(t)$, where $M_i(t)$ and $M_b(t)$ are defined respectively by
		\begin{equation}
			M_i(t)= \Delta y\Delta\theta\sum_{(i,j)\in I_{y}\times I_{\theta}}n_{i,j}(t),\quad M_b(t)=\Delta\theta\sum_{j\in I_{\theta+}}n_{+,j}(t)+\Delta\theta \sum_{j\in I_{\theta-}}n_{-,j}(t).
		\end{equation}
		The results at $t=4.0$ at which the CRTM achieves a steady state are shown in Fig. \ref{fig::RatioInnerBound}. It is observed that the ratio $M_i/M_b$ decays almost exponentially with the increase of $\varepsilon$, indicating that the cells tend to aggregate on the boundary when $\varepsilon$ is large. When $\varepsilon\rightarrow 0$, the convergence of $M_i/M_b$ can be observed, which verifies our theoretical results.  
		
		\begin{figure}[!ht]
			\centering
			\includegraphics[width=0.5\linewidth]{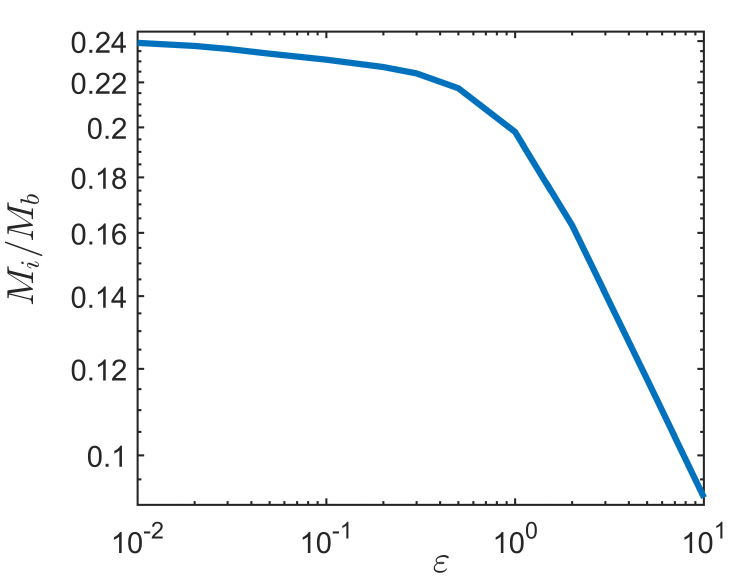}\\
			\caption{The ratio of the number of cells in the bulk to the number of cells near the boundary, $M_i/M_b$, at steady state $t=4.0$ calculated by the CRTM using different $\varepsilon$. 
			}
			\label{fig::RatioInnerBound}
		\end{figure}
		
		\section{Conclusion}
		\label{sec:conclusion}
		Motivated by recent biophysical experiments \cite{2009Accumulation, li2011accumulation,bianchi2017holographic,bianchi20193d}, we study the boundary aggregation phenomenon of run-and-tumble particles moving in a confined environment. We build a run-and-tumble model with boundary conditions that are more biologically relevant. The relative entropy inequality is established and it is theoretically proved that the system will converge to a steady state. Using similar scaling as in the derivation of the Fokker-Planck limit from the radiative transport equation, we can recover the CFPM studied in \cite{fu2021fokker}, in the strong scattering and forward peaked regime. 
		In particular, the same confinement boundary conditions can be obtained, which constitutes the main difference with the classical Fokker-Planck limit derivation. 
		The asymptotics allows us to compute the coefficients of the CFPM based on the individual rate of tumbling.
		Several numerical comparisons between the CRTM and the CFPM are presented.

		Physicists are particularly interested in the movement of active particles in confined environments, some experiments are conducted \cite{di2010bacterial,sokolov2010swimming}, and a lot of theoretical works with simulations can be found in the physics literature \cite{angelani2017confined,alonso2016microfluidic,malakar2018steady,razin2020entropy,roberts2022exact,zhou2021distribution}. 
		We intend to extend the model to more complex geometries. 
		
		\section{Appendix: Proof of Theorem 3.1}
		\renewcommand{\theequation}{A.\arabic{equation}} 
		\begin{proof}
			\textbf{First Step:~~proof $\omega_{\infty}, \omega_{\infty, \pm}$ are independent from $\theta$ by relative entropy}\\[2mm]
			
			At first, choose $H(x)=x^{2}$ in \eqref{3.5}, we can obtain 
			$$
			I(t)=\int_{-L}^{L}\! \int_{-\pi}^{\!\pi}\! m \omega^{2} \dd \theta \dd y+\int_{0}^{\!\pi}\!\! m_{+} \omega_{+}^{2} \dd \theta+\int_{-\pi}^{0}\! m_{-} \omega_{-}^{2} \dd \theta \quad \text { and} \quad \frac{\dd I(t)}{\dd t} \leq 0 .
			$$
			Since initial data are bounded as in Corollary \ref{corollary2.2}, we infer 
			$$
			I(0) \leq(4 \pi L+2 \pi)\Gamma^{2}<\infty.
			$$
			Since $I(t)$ is non-negative, it converges to a limit as $t \rightarrow \infty$, therefore
			$$
			\lim\limits _{\tau \rightarrow \infty} \int_{\tau}^{\infty}\left(D_{H}^{(1)}+D_{H}^{(2)}+D_{H}^{(3)}\right)\left[\omega, \omega_{\pm}\right] \dd t=\lim\limits _{\tau \rightarrow \infty} \int_{\tau}^{\infty} \frac{\dd I(t)}{\dd t} \dd t=0 .
			$$
			For $\omega_{n_{k}}$ and $\omega_{n_{k}, \pm}$, we have
			\begin{align}
				\lim\limits _{n_{k}\rightarrow \infty}& \int_{T}^{\infty}\!\left(D_{H}^{(1)}+D_{H}^{(2)}+D_{H}^{(3)}\right)\left[\omega_{n_{k}}, \omega_{n_{k}, \pm}\right](t) \dd t=\lim\limits _{n_{k} \rightarrow \infty} \int_{T+n_{k}}^{\infty}\left(D_{H}^{(1)}+D_{H}^{(2)}+D_{H}^{(3)}\right)\left[\omega, \omega_{\pm}\right](t) \dd t=0. \nonumber 
			\end{align}
			Due to the non-negativity of $D_{H}^{(1)}, D_{H}^{(2)}, D_{H}^{(3)}$ in \eqref{3.5}, we can extract a subsequence $n_{k}$ satisfying
			\begin{subequations}\label{4.5}
				\sym$
				&\lim\limits _{n_{k} \rightarrow \infty} \omega_{n_{k}}(t, y,\theta)-\omega_{n_{k}}(t,\theta',y)=0, & \text { a.e. in } & {[T, \infty) \times \Omega,} \\[2mm]
				&\lim\limits _{n_{k} \rightarrow \infty} \omega_{n_{k}, \pm L }(t,\theta)-\omega_{n_{k}, \pm}(t,\theta)=0, & \text { a.e. in } & {[T, \infty) \times \Omega_{\pm},} \\[2mm]
				&\lim\limits _{n_{k} \rightarrow \infty} \omega_{n_{k}, \pm }(t,\theta)-\omega_{n_{k}, \pm}(t,\theta')=0, & \text { a.e. in } & {[T, \infty)\times \Omega_{\pm}}.$
			\end{subequations}
			\textbf{Second Step:~proof $\omega_{\infty, \pm}$ are independent of
				time}\\[2mm]
			From \eqref{3.4}, it follows that $m_{\pm}$ are independent of time $t$ and thus we know that for all $k \in \mathbb{N}, \omega_{n_{k}}$ and $\omega_{n_{k},+}$ satisfy
			\begin{align}\label{4.6}
				m_{+} \frac{\pt \omega_{n_{k},+}}{\pt t}+&k_+(\theta)\omega_{n_{k},+}m_{+}-\int_{0}^{\!\pi}\!\! K_+\left(\theta',\theta-\theta'\right)\omega_{n_{k},+}'m_{+}'d\theta'=V\sin\theta\omega_{n_{k},+L}m_{+L}. 
			\end{align}
			Next, we choose compact supported test function  $\phi_{+}(t,\theta) \in L^{1}\left([T, \infty) \times \Omega_{+}\right) \bigcap C^{\infty}\left([T, \infty) \times \Omega_{+}\right)$, multiply both sides of \eqref{4.6}, integrate on  $[T, \infty) \times(0, \pi)$, then
			\begin{align*}
				\int_{T}^{\infty}\! \int_{0}^{\!\pi}\!\! m_{+}\omega_{n_{k},+} \frac{\pt \phi_{+}}{\pt t}d\theta d{t}=& \int_{T}^{\infty}\! \int_{0}^{\!\pi}\!\!\phi_{+}[V\sin\theta\omega_{n_{k},+L}m_{+L}]d\theta d{t}\\
				&-\int_{T}^{\infty}\! \int_{0}^{\!\pi}\!\!\phi_{+}[k_+(\theta)\omega_{n_{k},+}m_{+}-\int_{0}^{\!\pi}\!\! K_+\left(\theta',\theta-\theta'\right)\omega_{n_{k},+}'m_{+}'d\theta']d\theta d{t}.\nonumber
			\end{align*}
			Similar to the processing in the relative entropy inequality proof, we can obtain 
			\begin{align*}
				&\int_{T}^{\infty}\! \int_{0}^{\!\pi}\!\! m_{+}\omega_{n_{k},+} \frac{\pt \phi_{+}}{\pt t}d\theta d{t}=\int_{T}^{\infty}\! \int_{0}^{\!\pi}\!\!\phi_{+}[V\sin\theta(\omega_{n_{k},+L}-\omega_{n_{k},+})m_{+L}]d\theta d{t}\\
				&+\int_{T}^{\infty}\! \int_{0}^{\!\pi}\!\!\phi_{+}\int_{0}^{\!\pi}\!\!K\left(y,\theta',\theta-\theta'\right)(\omega_{n_{k},+}'-\omega_{n_{k},+})m_{+}'d\theta'd\theta d{t}.
			\end{align*}
			Let $n_{k} \rightarrow \infty$ and pass to limit from  \eqref{4.5} to get
			\begin{align*}
				\int_{T}^{\infty}\! \int_{0}^{\!\pi}\!\! m_{+}\omega_{n_{k},+} \frac{\pt \phi_{+}}{\pt t}d\theta d{t}=0.
			\end{align*}
			Therefore, $\omega_{\infty,+}$ is independent of $t$. Similarly, $\omega_{\infty,-}$ is independent of $t$.\\[2mm]
			\textbf{Third Step :~proof $\omega_{\infty}$ is independent of time $t$ and position $y$.}\\[2mm]
			Since $m$ is independent of $t$, for all $k \in \mathbb{N},$ we can obtain
			\begin{equation*}
				m\frac{\pt \omega_{n_{k}}}{\pt t}+V \sin\theta m\frac{\pt \omega_{n_{k}}}{\pt y}+ \int_{-\pi}^{\!\pi}\!  K\left(y,\theta',\theta-\theta'\right)(\omega_{n_{k}}-\omega_{n_{k}}') m' d\theta'=0. \label{3.9}
			\end{equation*}
			Let $n_{k} \rightarrow \infty$, from \eqref{4.5}, then
			\begin{equation*}
				m\frac{\pt \omega_{\infty}}{\pt t}=-V \sin\theta m\frac{\pt \omega_{\infty}}{\pt y}.
			\end{equation*}
			Since $m$ is strictly positive in $\Omega$ except for a zero-measure subset. We divide both sides by $m$. At the same time, we multiply it by a test function $\phi(t, y) \in C^{2}([0, \infty) \times(-L, L))$ and integrate within the domain, then 
			$$
			\int_{0}^{\infty} \int_{-L}^{L}\! \phi \frac{\pt \omega_{\infty}}{\pt t} \dd y \dd t=-V \sin\theta \int_{0}^{\infty} \int_{-L}^{L}\! \phi \frac{\pt \omega_{\infty}}{\pt y} \dd y \dd t.
			$$
			The right-hand side of the above equation depends on $\theta$ while the left-hand side does not, which indicates that both sides are zero.
			
			Therefore, $\omega_{\infty}$ is independent of $t$ and $y$, Meanwhile, $\omega_{\infty}$ and $\omega_{\infty, \pm}$ are both constant almost everywhere in their domain.\\[2mm]
			\textbf{Fourth Step:~proof $\omega_{\infty}=\omega_{\infty, \pm }$.}\\[2mm]
			Multiplying \eqref{3.9} by a test function $\phi \in C^{2}([0,+\infty) \times \Omega)$, that is dependent of $t$ and $\theta$ but not in $y$. At the same time, integrating on $[T, \infty) \times \Omega$
			\begin{align*}
				\int_{T}^{\infty}\! \int_{-L}^{L}\! \int_{-\pi}^{\!\pi}\! m \omega_{n_{k}} \frac{\pt \phi}{\pt t} \dd \theta \dd y \dd t=&-\int_{T}^{\infty}\! \int_{-\pi}^{\!\pi}\! V \sin\theta \int_{-L}^{L}\! m\phi \frac{\pt \omega_{n_{k}}}{\pt y} \dd y \dd \theta \dd t \\
				&\quad-\int_{T}^{\infty}\! \int_{-\pi}^{\!\pi}\! \int_{-L}^{L}\! \int_{-\pi}^{\!\pi}\!  K\left(y,\theta',\theta-\theta'\right)(\omega_{n_{k}}-\omega_{n_{k}}') m' d\theta'\dd yd\theta d{t}.
			\end{align*}
			Combining the boundary conditions of the original equation and the steady state equation and integrating by parts, we get
			\begin{align*}
				&-\int_{T}^{\infty}\! \int_{-\pi}^{\!\pi}\! V \sin\theta
				\int_{-L}^{L}\! m \phi \frac{\pt \omega_{n_{k}}}{\pt y} \dd y \dd \theta \dd t \\
				&=-\int_{T}^{\infty}\! \int_{-\pi}^{\!\pi}\! V \sin\theta
				[m_{+L}\phi_{+L}\omega_{n_{k},+L}-m_{-L}\phi_{-L}\omega_{n_{k},-L}]d\theta d{t}  +\int_{T}^{\infty}\! \int_{-\pi}^{\!\pi}\! V \sin\theta
				\omega_{n_{k}}[m_{+L}\phi_{+L}-m_{-L}\phi_{-L}]d\theta d{t} \\
				&=\int_{T}^{\infty}\! \int_{0}^{\!\pi}\!\!V \sin\theta \phi_{+L}(\omega_{n_{k},+}-\omega_{n_{k},+L})m_{+L}d\theta d{t}-\int_{T}^{\infty}\! \int_{-\pi}^{0}\! V \sin\theta \phi_{-L} (\omega_{n_{k},-}-\omega_{n_{k},-L})m_{-L} d\theta d{ t}.\\
				&\quad-\int_{T}^{\infty}\! \int_{0}^{\!\pi}\!\! V \sin\theta \phi_{+L} \omega_{n_{k},+} m_{+L} \dd \theta \dd t- \int_{T}^{\infty}\! \int_{-\pi}^{0}\! V \sin\theta \phi_{+L} \omega_{n_{k},+L} m_{+L} \dd \theta +\int_{T}^{\infty}\! \int_{-\pi}^{0}\! V \sin\theta \phi_{-L} \omega_{n_{k},-} m_{-L} \dd \theta \dd t 
				\\
				&\quad+\int_{T}^{\infty}\! \int_{0}^{\!\pi}\!\! V \sin\theta \phi_{-L} \omega_{n_{k},-L} m_{-L} \dd \theta 
				+\int_{T}^{\infty}\! \int_{-\pi}^{\!\pi}\! V \sin\theta
				\omega_{n_{k}}[m_{+L}\phi_{+L}-m_{-L}\phi_{-L}]d\theta d{t}. \nonumber
			\end{align*}
			Let $n_{k} \rightarrow\infty$ , by \eqref{4.5} we obtain
			$$\int_{T}^{\infty}\! \int_{0}^{\!\pi}\!\!V \sin\theta \phi_{L}(\omega_{n_{k},+}-\omega_{n_{k},+L})m_{+L}d\theta=-\int_{T}^{\infty}\! \int_{-\pi}^{0}\! V \sin\theta \phi_{-L} (\omega_{n_{k},-}-\omega_{n_{k},-L})m_{-L} \dd \theta \dd =0,$$
			and thus
			\begin{align}
				\int_{T}^{\infty}\! \int_{-L}^{L}\! \int_{-\pi}^{\!\pi}&\! \omega_{\infty} \frac{\pt(m\phi)}{\pt t} \dd \theta \dd y \dd t \notag\\
				&=-\omega_{\infty,+}\int_{T}^{\infty}\! \int_{0}^{\!\pi}\!\!V \sin\theta \phi_{+L}m_{+L}d\theta d{t} -\omega_{\infty,+L}\int_{T}^{\infty}\! \int_{-\pi}^{0}\!V \sin\theta \phi_{+L}m_{+L}d\theta d{t} \notag\\
				&\quad+\omega_{\infty,-}\int_{T}^{\infty}\! \int_{-\pi}^{0}\!V \sin\theta \phi_{-L}m_{-L}d\theta d{t}+\omega_{\infty,-L}\int_{T}^{\infty}\! \int_{0}^{\!\pi}\!\!V \sin\theta \phi_{-L}m_{-L}d\theta d{t} \notag\\
				&\quad+\omega_{\infty}\int_{T}^{\infty}\! \int_{-\pi}^{\!\pi}\! V \sin\theta
				\omega_{n_{k}}[m_{+L}\phi_{+L}-m_{-L}\phi_{-L}]d\theta d{t} \notag \\
				&=\left(\omega_{\infty}-\omega_{\infty,+}\right) \int_{T}^{\infty}\!\int_{0}^{\!\pi}\!\! V \sin\theta \phi_{+L}m_{+L}d\theta d{t}+\left(\omega_{\infty}-\omega_{\infty,+L}\right) \int_{T}^{\infty}\!\int_{-\pi}^{0}\! V \sin\theta \phi_{+L}m_{+L}d\theta d{t} \notag\\
				&+\left(\omega_{\infty,-}-\omega_{\infty}\right) \int_{T}^{\infty}\!\int_{-\pi}^{0}\! V \sin\theta \phi_{-L}m_{-L}d\theta d{t}+\left(\omega_{\infty,-L}-\omega_{\infty}\right) \int_{T}^{\infty}\!\int_{0}^{\!\pi}\!\! V \sin\theta \phi_{-L}m_{-L}d\theta d{t}. \nonumber
			\end{align}
			Since $\omega_{\infty}$ is independent on $y,$ we have $\omega_{\infty,+L}=\omega_{\infty}=\omega_{\infty,-L},$ therefore
			\begin{align}
				\int_{T}^{\infty}\! \int_{-L}^{L}\! \int_{-\pi}^{\!\pi}&\! \omega_{\infty} \frac{\pt(m\phi)}{\pt t} \dd \theta \dd y \dd t \notag\\
				&=\left(\omega_{\infty}-\omega_{\infty,+}\right) \int_{T}^{\infty}\!\int_{0}^{\!\pi}\!\! V \sin\theta \phi_{+L}m_{+L}d\theta d{t}+\left(\omega_{\infty,-}-\omega_{\infty}\right) \int_{T}^{\infty}\!\int_{-\pi}^{0}\! V \sin\theta \phi_{-L}m_{-L}d\theta d{t}.\nonumber
			\end{align}
			Here we can choose a function $\phi(t, y,\theta)$ such that for $t$ in $(0,+\infty)$ and $\theta$ in $(0, \pi)$ is compactly supported,and satisfies
			$$
			\int_{T}^{\infty}\! \int_{-L}^{L}\! \int_{0}^{\!\pi}\!\! V \sin\theta \frac{\pt(m \phi)}{\pt y} \dd \theta \dd y \dd t=\left.\int_{T}^{\infty}\! \int_{0}^{\!\pi}\!\! V \sin\theta(m \phi)\right|_{-L} ^{L} \dd \theta \dd y \dd t \neq 0 .
			$$
			Therefore, $\omega_{\infty}=\omega_{\infty,+}=C_{1} $. Likely, we have $\omega_{\infty}=\omega_{\infty,-}=C_{2}$. So, there exist a constant $C$ that satisfies 
			$$
			\left.\omega_{\infty}\right|_{\Omega}=\left.\omega_{\infty, \pm}\right|_{\Omega_{\pm}}=C, \quad \text { a.e. }
			$$
			By the conservation of mass,for any $n_{k} \in \mathbb{N}, \omega_{n_{k}}$ satisfies
			$$
			\int_{-L}^{L}\! \int_{-\pi}^{\!\pi}\! m \omega_{n_{k}} \dd \theta \dd y+\int_{0}^{\!\pi}\!\! m_{+} \omega_{n_{k},+} \dd \theta+\int_{-\pi}^{0}\! m_{-} \omega_{n_{k},-} \dd \theta=1 .
			$$
			When $n_{k} \rightarrow \infty$, we obtain 
			$$
			\int_{-L}^{L}\! \int_{-\pi}^{\!\pi}\! m \omega_{\infty} \dd \theta \dd y+\int_{0}^{\!\pi}\!\! m_{+} \omega_{\infty,+} \dd \theta+\int_{-\pi}^{0}\! m_{-} \omega_{\infty,-} \dd \theta=1 .
			$$
			Therefore, the constant $C$ is 1.\\[2mm]
			\textbf{Fifth Step :~proof the system is strongly convergent in time}\\[2mm]
			For a smooth test function  $\phi(y,\theta) \in C_{0}^{2}(\Omega)$, we define 
			\begin{equation*}\label{eq::eqr1}
				u(t)=\int_{\Omega} m  \omega \dd y \dd \theta.
			\end{equation*}
			Multiply both sides of the above formula by $\phi(y,\theta)$, and integrate within  $\Omega$, then
			\begin{align}
				\int_{\Omega} \frac{\pt(m \phi \omega)}{\pt t} \dd y \dd \theta&=\int_{\Omega} \phi \frac{\pt n}{\pt t} \dd y \dd \theta \notag \\
				&=\int_{\Omega} \phi\left(-k(y,\theta) n+\int_{\Omega} K\left(y,\theta',\theta-\theta'\right) n' d\theta'-V \sin\theta \frac{\pt n}{\pt y}\right) \dd y \dd \theta \leq C^{\prime}. \notag
			\end{align}
			When the initial data meets the  requirements of Corollary \ref{corollary2.2}, $C^{\prime}$ is a  positive constant thanks to integration by parts. Therefore $u(t)$ is a Lipschitz continuous function in time, and 
			We have shown that it converges weakly to $1$. From that we deduce that it is strongly convergent. Similarly, strong convergence of $\omega_{\pm}$ in time cis obtained.
			
			Therefore the long-term convergence Theorem 
			\ref{theorem-3.1} is proved.
		\end{proof}
		
		\section*{Acknowledgment}
		Jingyi Fu and Min Tang are partially supported by NSFC 11871340, NSFC12031013, Shanghai pilot innovation project 21JC1403500. B.P. has received funding from the European Research Council (ERC) under the European Union's Horizon 2020 research and innovation programme (grant agreement No 740623).
		
		\bibliographystyle{m3as} 
		\bibliography{Velocity_jump.bib}

	\end{document}